\definecolor{darkblue}{rgb}{0.0,0,0.7} 
\definecolor{darkred}{rgb}{0.7,0,0} 
\def\defn#1{{\sf #1}}
\newcommand{\edge}{\mathbin{\tikz [semithick, baseline=-0.2ex,-latex, ->] \draw [-] (0pt,0.4ex) -- (1em,0.4ex);}} 
\newcommand{\edgel}{\mathbin{\tikz [semithick, baseline=-0.2ex,-latex, ->] \draw [-] (0pt,0.4ex) -- (2.2em,0.4ex);}} 
\newcommand{\RR}{\mathbb R}
\newcommand{\ZZ}{\mathbb Z}
\DeclareMathOperator{\Red}{Red}
\DeclareMathOperator{\Span}{span}
\DeclareMathOperator{\idop}{id}
\DeclareMathOperator{\Aut}{Aut}
\DeclareMathOperator{\Sym}{Sym}
\DeclareMathOperator{\NN}{\mathbb{N}}
\DeclareMathOperator{\val}{val}
\DeclareMathOperator{\pr}{pr}
\DeclareMathOperator{\supp}{supp}
\newtheorem{theorem}{Theorem}[section]
\newtheorem{corollary}[theorem]{Corollary}
\newtheorem{Proposition}[theorem]{Proposition}
\newtheorem{Lemma}[theorem]{Lemma}
\theoremstyle{definition}
\newtheorem{Definition}[theorem]{Definition}
\newtheorem{remark}[theorem]{Remark}
\newtheorem{example}[theorem]{Example}
\newtheorem{question}[theorem]{Question}
\title[Reflection Groups and Quiver Mutation: Diagrammatics]{Reflection Groups and Quiver Mutation:\\ Diagrammatics}
\author[P.~Wegener]{Patrick Wegener}
\address{Patrick Wegener, Technische Universit\"at Kaiserslautern, Germany}
\email{wegener@mathematik.uni-kl.de}
\subjclass[2010]{Primary 06B15, 05E10, 20F55, 05E18}
\keywords{Cluster algebras, Coxeter groups, Hurwitz action, Group presentations}
\date{\today}
\begin{document}
\newcolumntype{C}[1]{>{\centering\arraybackslash}m{#1}}

\begin{abstract}
We extend Carter's notion of admissible diagrams and attach a ``Dynkin-like'' diagram to each reduced reflection factorization of an element in a finite Weyl group. We give a complete classification for the diagrams attached to reduced reflection factorizations. Remarkably, such a diagram turns out to be cyclically orientable if and only if it is isomorphic to the underlying graph of a quiver which is mutation-equivalent to a Dynkin quiver. Furthermore we show that each diagram encodes a natural presentation of the Weyl group as reflection group. The latter one extends work of Cameron--Seidel--Tsaranov as well as Barot--Marsh.
\end{abstract}

\maketitle

\tableofcontents

\section{Introduction}\label{sec:intro}

Any Weyl group is generated by reflections. In particular, each element of a Weyl group can be written as a product of reflections. We call this a \defn{reflection factorization}.
In 1972, Carter \cite{Car72} used a particular class of reflection factorizations of an element in a Weyl group to define so-called admissible diagrams. Using this diagrams, he was able to give a complete classification of the conjugacy classes in the Weyl groups. We extend this method and define a diagram attached to each reduced reflection factorization of every element in a Weyl group. The vertices of the diagram correspond to the reflections appearing in the reflection factorization. Two reflections $s$ and $t$ are connected by $w_{st}$ edges, where $w_{st}$ is the order of $st$ minus two. We call the resulting diagram a \defn{Carter diagram} (the precise definition will be given in Definition \ref{def:CarterDiagram}). 

In Section 2 we give a complete classification of Carter diagrams. A Carter diagram is said to be \defn{of type} $X_n$ if the reflection group generated by the reflections in the corresponding reduced reflection factorization is of type $X_n$. For the infinite families $A_n$, $B_n$ and $D_n$ we explicitely give a method to construct all these diagrams. Here we will benefit from work of Kluitmann \cite{Klu88}, who implicitely constructed the Carter diagrams of type $A_n$. For the exceptional types we obtain these diagrams by computational methods. We should mention that in the simply-laced types, that is types $A_n, D_n, E_6, E_7,E_8$, these diagrams were already described by Cameron--Seidel--Tsaranov \cite{CST94}, while the diagrams for the remaining types appear in the work of Felikson \cite{Fel04}.

In Section 3 we link these diagrams to those graphs arising as underlying unoriented graphs in the mutation classes of Dynkin quivers.

\begin{theorem} \label{thm:Main1}
Let $Q$ be a quiver which is mutation-equivalent to an orientation of a Dynkin diagram. Then the underlying undirected graph $\overline{Q}$ is a Carter diagram of the same Dynkin type.

Moreover, let $\Gamma$ be a Carter diagram of Dynkin type. Then there exists a quiver $Q$ which is mutation-equivalent to an orientation of the corresponding Dynkin diagram such that $\Gamma$ is isomorphic to $\overline{Q}$ if and only if $\Gamma$ is cyclically orientable.
\end{theorem}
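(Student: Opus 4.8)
The plan is to prove the two assertions separately, using as a bridge the standard dictionary between reduced reflection factorizations and the root-theoretic data attached to a quiver by a \emph{companion basis}. Recall that to a quiver $Q$ in the mutation class of a Dynkin diagram one associates a companion basis, i.e.\ a $\ZZ$-basis $\{\beta_1,\dots,\beta_n\}$ of the root lattice consisting of roots whose pairwise angles reproduce the weights of $\overline{Q}$; the reflections $s_{\beta_i}$ then satisfy $\operatorname{ord}(s_{\beta_i}s_{\beta_j})-2 = (\text{number of edges between } i,j \text{ in } \overline{Q})$. Thus, once one knows that $s_{\beta_1}\cdots s_{\beta_n}$ is a reduced reflection factorization of an element generating a reflection group of type $X_n$, its Carter diagram is exactly $\overline{Q}$.

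For the first assertion I would argue by induction on the length of a mutation sequence taking a fixed orientation of the Dynkin diagram to $Q$. In the base case $Q$ is acyclic, and the simple reflections read off in the order prescribed by the orientation form a reduced factorization of a Coxeter element whose Carter diagram is the Dynkin diagram itself. For the inductive step the key observation is that a single mutation $\mu_k$ acts on the companion basis by $\beta_k \mapsto -\beta_k$ and $\beta_j \mapsto s_{\beta_k}(\beta_j)$ for the neighbours $j$ of $k$; on the level of reflections this is a composition of Hurwitz moves (conjugation of a reflection by an adjacent one, leaving $s_{\beta_k}$ itself fixed). Since such moves preserve reducedness, preserve the generated reflection subgroup, and change the total product only within a single conjugacy class, the transformed tuple is again a reduced reflection factorization of an element of type $X_n$, and by construction its Carter diagram is $\overline{\mu_k(Q)}$. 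This establishes the first paragraph of the theorem.

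For the forward direction of the equivalence I would use the known structural fact that every quiver in the mutation class of a Dynkin quiver is \emph{cyclically oriented}: each of its chordless cycles is an oriented cycle. Hence if $\Gamma \cong \overline{Q}$ for such a $Q$, then $\Gamma$ inherits a cyclic orientation and is cyclically orientable.

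The substantial direction is the converse, and this is where I expect the main obstacle. Given a cyclically orientable Carter diagram $\Gamma$ of type $X_n$, choose a cyclic orientation to obtain a quiver $Q$ with $\overline{Q}=\Gamma$; the task is to show that $Q$ lies in the mutation class of the Dynkin quiver of type $X_n$. Here I would invoke the recognition theorem of Barot--Geiss--Zelevinsky: a quiver is mutation-equivalent to a Dynkin quiver precisely when it admits an \emph{admissible} quasi-Cartan companion that is positive definite, admissibility being a sign condition on the product of the off-diagonal entries around each chordless cycle. The cyclic orientation of $Q$ is exactly what lets one choose signs making the companion admissible, while positive definiteness of type $X_n$ comes from the hypothesis that the reflections $s_{\beta_i}$ generate a finite Weyl group of type $X_n$, so that the associated symmetrizable form is the one of that type; matching finite types then pins down the mutation class as that of $X_n$. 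The delicate points, and the bulk of the work, are (i) that the sign and positivity bookkeeping remains compatible across the non-simply-laced types $B_n,C_n,F_4,G_2$, where the diagrams carry multiple edges and one must track a symmetrizable rather than symmetric form, and (ii) verifying that the companion read off from $\Gamma$ is genuinely admissible and not merely positive definite. As an independent check, one can compare the classification of cyclically orientable Carter diagrams obtained in Section~2 with the known lists of underlying graphs of quivers in the Dynkin mutation classes and observe that the two lists coincide.
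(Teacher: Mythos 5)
Your route (companion bases plus the Barot--Geiss--Zelevinsky recognition of finite type via positive quasi-Cartan companions) is genuinely different from the paper's proof, which proceeds type by type, matching its classification of Carter diagrams (Corollary \ref{cor:Kluitmann}, Theorems \ref{thm:CarterTypeB} and \ref{thm:CarterTypeD}, and computed lists in the exceptional types) against the known combinatorial descriptions of the Dynkin mutation classes (Buan--Vatne for $A_n$, Nakanishi--Stella for $B_n$, Vatne for $D_n$, computer inspection for the exceptional types); see Propositions \ref{prop:ClusterIsCarterTypeA}, \ref{prop:ClusterIsCarterTypeD} and \ref{prop:ClusterIsCarterTypeX}. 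However, as written your argument contains a step that fails. The companion-basis mutation rule in your induction is wrong: take $Q\colon 1 \to 2 \to 3$ with companion basis the simple roots $\alpha_1,\alpha_2,\alpha_3$ of $A_3$ and mutate at $k=2$. Then $\mu_2(Q)$ is an oriented triangle, but reflecting \emph{all} neighbours of $k$ gives the roots $\alpha_1+\alpha_2$, $-\alpha_2$, $\alpha_2+\alpha_3$, and $(\alpha_1+\alpha_2 \mid \alpha_2+\alpha_3)=0$, so the associated Carter diagram is still a path --- the edge between $1$ and $3$ in $\overline{\mu_2(Q)}$ never appears. The correct rule (Parsons' companion-basis mutation in the simply-laced case; mutation of quasi-Cartan companions in general) reflects only the neighbours on one side of $k$, and the statement that this rule again produces a companion basis for $\mu_k(Q)$ is a nontrivial theorem whose proof consists of exactly the sign analysis you are trying to bypass, not an observation. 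Note also that one-sided conjugation of part of a tuple is not a Hurwitz transformation (it does not preserve the product), so your justification via the Hurwitz action is unfounded; what is true, and what would repair this step, is that the mutated roots still form a $\ZZ$-basis of the root lattice, so that linear independence and the type of the generated reflection group are preserved by \cite{BW18}, as in Remark \ref{rem:MinCarter}.

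In the converse direction there is a second genuine gap: you defer to ``delicate points'' precisely the mathematical content of the proof. Positive definiteness alone cannot suffice: the non-cyclically oriented triangle with arrows $1 \to 2$, $1 \to 3$, $3 \to 2$ admits a positive definite quasi-Cartan companion --- take $(a_{12},a_{13},a_{23})=(-1,-1,1)$, which is the Gram-type matrix of the linearly independent roots $e_1-e_2$, $e_2-e_3$, $e_2-e_4$ in a root system of type $A_3$ --- and yet this quiver is mutation-infinite, since mutating at $3$ creates a double arrow. So the admissibility condition, which ties the signs of the companion to a cyclic orientation, is indispensable, and deducing it from cyclic orientability of $\Gamma$, together with identifying the Cartan--Killing type of the companion with the cluster type (also in the non-simply-laced cases $B_n$, $F_4$, $G_2$), constitutes the bulk of this direction; until it is carried out, this half of your proposal is a plan rather than a proof. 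The paper sidesteps all of this by exhibiting, for every cyclically orientable Carter diagram in its classification, an explicit quiver of one of the known mutation-class shapes, at the price of a case analysis and computer checks for the exceptional types.
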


We shortly recall the concept of quiver mutation and the definition of a cyclically orientable graph in Section \ref{sec:QuiverMutation}.

Given a crystallographic root system $\Phi$ and a Carter diagram $\Gamma$ of the same Dynkin type as $\Phi$, we extend work of Cameron--Seidel--Tsaranov \cite{CST94} as well as Barot--Marsh \cite{BM15} and show that $\Gamma$ encodes a natural presentation of the corresponding Weyl group $W=W_{\Phi}$. More precisely, for vertices $i$ and $j$ of $\Gamma$ we define $m_{ij}:=w_{ij} +2$, where $w_{ij}$ is the \defn{weight} of $i$ and $j$, that is, the number of edges connecting $i$ and $j$ in $\Gamma$. (Note that we possibly have $w_{ij}=0$, that is, there is no edge between $i$ and $j$.) Let $W(\Gamma)$ be the group with generators $t_i$, $i$ a vertex of $\Gamma$, subject to the following relations:
\begin{enumerate}
\item[(R1)] $t_i^2 =1$ for all vertices $i$ of $\Gamma$;
\item[(R2)] $(t_it_j)^{m_{ij}} =1$ for all vertices $i \neq j$ of $\Gamma$;
\item[(R3)] for any chordless cycle
$$
i_0 ~\stackrel{w_{i_0 i_1}}{\edgel} ~ i_1 ~ \stackrel{w_{i_1 i_2}}{\edgel}~ \ldots ~ \stackrel{w_{i_{d-2} i_{d-1}}}{\edgel} ~i_{d-1} ~ \stackrel{w_{i_{d-1} i_0}}{\edgel} ~ i_0,
$$
where either all weights are $1$ or $w_{i_{d-1} i_0} =2$, we have 
$$
(t_{i_0} t_{i_1} \cdots t_{i_{d-2}} t_{i_{d-1}} t_{i_{d-2}} \cdots t_{i_1})^2 =1.
$$
\end{enumerate}

\begin{theorem} \label{thm:Main2}
Let $\Phi$ be a crystallographic root system and let $\Gamma$ be a Carter diagram of the same Dynkin type as $\Phi$. Then $W(\Gamma)$ is isomorphic to the Weyl group $W_{\Phi}$. 
\end{theorem}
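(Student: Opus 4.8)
The plan is to write down the obvious candidate homomorphism and then bound the size of its source. Fix a reduced reflection factorization $(t_1,\dots,t_n)$ having $\Gamma$ as its Carter diagram, and let $s_i=s_{\alpha_i}\in W_\Phi$ be the reflection, with root $\alpha_i$, attached to the vertex $i$. Define $\varphi\colon W(\Gamma)\to W_\Phi$ on generators by $\varphi(t_i)=s_i$; to see that $\varphi$ is well defined I must check that (R1)--(R3) hold among the $s_i$. Relation (R1) is clear, and (R2) holds because the weight $w_{ij}$ equals $\operatorname{ord}(s_is_j)-2$ by construction, so that $m_{ij}=\operatorname{ord}(s_is_j)$ and $(s_is_j)^{m_{ij}}=1$. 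The content is in (R3). Setting $\beta=s_{i_1}\cdots s_{i_{d-2}}(\alpha_{i_{d-1}})$, the displayed element is $s_{i_0}s_\beta$, so $(s_{i_0}s_\beta)^2=1$ is equivalent to $s_{i_0}$ and $s_\beta$ commuting, i.e.\ to the roots $\alpha_{i_0}$ and $\beta$ being orthogonal (or proportional). I would establish this orthogonality by a direct computation in the root system, reading off the relative positions of the roots from the explicit shape of the chordless cycles classified in Section~2---both for cycles all of whose weights are $1$ and for those with a single double edge $w_{i_{d-1}i_0}=2$. Since the $s_i$ generate $W_\Phi$ (this is what ``of Dynkin type $X_n$'' means), $\varphi$ is surjective, and the theorem reduces to the inequality $|W(\Gamma)|\le|W_\Phi|$.

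For the upper bound I would first treat the cyclically orientable case. By Theorem~\ref{thm:Main1} such a $\Gamma$ is isomorphic to the underlying graph $\overline Q$ of a quiver $Q$ mutation-equivalent to an orientation of the Dynkin diagram of type $X_n$. The defining relations of the group attached to $\overline Q$ by Barot--Marsh \cite{BM15} are exactly (R1)--(R3): the braid exponents $m_{ij}$ are extracted from the edge labels in the same fashion, and under cyclic orientability their cycle relations are precisely the instances of (R3) with all weights $1$ or terminal weight $2$. Hence their theorem gives $W(\Gamma)\cong W_\Phi$; in the simply-laced subcase one may instead cite Cameron--Seidel--Tsaranov \cite{CST94}. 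When $\Gamma$ is a tree no chordless cycles occur, (R3) is vacuous, and (R1)--(R2) is literally the Coxeter presentation of $W_\Phi$, which is the base case.

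The main obstacle is the non-cyclically-orientable case, for which no quiver exists and \cite{BM15} does not apply. My proposed route is through the Hurwitz action on reduced reflection factorizations. The factorization $(t_1,\dots,t_n)$ is a reduced factorization of the quasi-Coxeter element $w=t_1\cdots t_n$, and I would prove the \emph{key lemma} that a single Hurwitz move $(\dots,t_i,t_{i+1},\dots)\mapsto(\dots,t_it_{i+1}t_i,t_i,\dots)$ induces an isomorphism $W(\Gamma)\cong W(\Gamma')$ between the two presented groups, by exhibiting the natural generator correspondence and checking that each of (R1)--(R3) for $\Gamma$ is a consequence of (R1)--(R3) for $\Gamma'$. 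Invoking transitivity of the Hurwitz action on the reduced reflection factorizations of $w$, every $\Gamma$ arising from $w$ then has an isomorphic presented group; combined with the classification of quasi-Coxeter classes in Section~2 this lets me reduce each orbit either to a tree (for honest Coxeter elements) or to a cyclically orientable representative already handled above. I expect the genuinely hard points to be the verification of the key lemma---where the transformation of the chordless-cycle relations (R3) under a Hurwitz move must be tracked carefully---and, for any orbit containing no cyclically orientable diagram, showing by hand that imposing (R3) only on the restricted family of cycles still forces $|W(\Gamma)|\le|W_\Phi|$; this is exactly the point where the fine combinatorics of Section~2 is indispensable.
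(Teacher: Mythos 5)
Your proposal follows the same architecture as the paper's actual proof: your ``key lemma'' that an elementary Hurwitz move induces an isomorphism $W(\Gamma)\cong W(\sigma_i(\Gamma))$ of presented groups is precisely the content of Lemmas \ref{lem:SurjHomB1}--\ref{lem:SurjHomB2} and Proposition \ref{prop:HurwitzPresInv} (proved there by the substitution test and a long case-by-case verification of how the cycle relations transform), and your reduction along the Hurwitz orbit uses the same transitivity input from \cite{BGRW}, with Barot--Marsh \cite{BM15} via Theorem \ref{thm:Main1} as the anchor. Two of the steps you treat as routine deserve flags. First, the initial surjection $\varphi\colon W(\Gamma)\to W_\Phi$ and the reduction to $|W(\Gamma)|\le |W_\Phi|$ are superfluous in this architecture: once the orbit reaches a diagram where \cite{BM15} applies, the chain of isomorphisms already yields the theorem, and the (R3)-orthogonality that you defer to ``a direct computation in the root system'' is itself a nontrivial verification the paper never has to make. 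Second, your key lemma is stated with a uniform generator correspondence, but in type $B_n$ that correspondence is \emph{not} the right map when $t_i$ is the distinguished (short-root) vertex; the paper splits this case off (Remark \ref{rem:RelationsNonSimplyLaced}, Proposition \ref{prop:CaseDistVertex}) and uses an isomorphism of diagrams there instead, so the careful tracking you anticipate must include this non-simply-laced case distinction.

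The genuine gap is the base case of your induction. You anchor every Hurwitz orbit at a tree (for Coxeter elements) or at a cyclically orientable representative, and for orbits containing no cyclically orientable diagram you offer only to argue ``by hand''---that is, you have no method. This is not a hypothetical worry: the admissible diagrams $E_7(a_4)$, $E_8(a_7)$ and $E_8(a_8)$ are not cyclically orientable, and nothing in your plan shows that the corresponding quasi-Coxeter orbits ever reach a cyclically orientable diagram, so \cite{BM15} cannot be invoked for them. The paper circumvents exactly this problem by anchoring at \emph{admissible} diagrams rather than cyclically orientable ones: by \cite[Theorem 1.1 and Remark 8.3]{BGRW} every Hurwitz orbit of a reduced factorization of a quasi-Coxeter element contains a factorization whose Carter diagram is admissible, and Proposition \ref{prop:Main2Admissible} then disposes of all admissible diagrams---via Theorem \ref{thm:Main1} and \cite{BM15} when they are cyclically orientable, and by direct computation in GAP for the three exceptions listed above. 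Without either a proof that every orbit contains a cyclically orientable diagram, or some substitute verification for those exceptional diagrams, your induction has no floor to land on.
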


We prove this result in Section \ref{sec:Presentations}. There we also discuss to what extent this result can be generalized to the non-crystallographic cases.

The result presented in Theorem \ref{thm:Main2} extends the results of \cite{BM15} since we also consider diagrams which are not cyclically orientable. It also extends the results of \cite{CST94} since we also include the non simply-laced cases and we give an explicit construction of the relevant diagrams in the infinite families $A_n,~B_n$ and $D_n$.

\medskip

\noindent \textbf{Notation.} For $n \in \NN = \{ 1,2,\ldots \}$ we set $[n]:=\{1, \ldots , n \}$ and $[\pm n]:= \{ \pm k \mid k \in [n] \}$. If G is a group and $g,h \in G$, we put $g^h:= hgh^{-1}$ for conjugation. As usual a graph $\Gamma$ is a pair $(\Gamma_0, \Gamma_1)$, where $\Gamma_0$ is a finite set (the set of vertices) and $\Gamma_1 \subseteq \Gamma_0 \times \Gamma_0$ (set of edges). A subgraph $\Psi$ is called \defn{full subgraph} (or \defn{induced subgraph}) if $\Psi_0 \subseteq \Gamma_0$ and $\Psi_1 = \Gamma_1 \cap (\Psi_0 \times \Psi_0)$.

\medskip

\noindent \textbf{Acknowledgement.} The author would like to thank Robert Marsh and Sophiane Yahiatene for valuable comments on an earlier draft of this work.

\section{Reflection Groups and Carter Diagrams}
\subsection{Reflection Groups and Root systems} \label{sec:CoxeterI}
We collect some facts about reflection groups and root systems as can be found in \cite{Hu90}.

\medskip
Let $V$ be a (real) euclidean vector space with positive definite symmetric bilinear form $(-\mid -)$. A \defn{reflection} is a linear map $V \rightarrow V$ which sends some $\alpha \in V \setminus \{ 0 \}$ to its negative and fixes the hyperplane $\alpha^{\bot}$ orthogonal to $\alpha$. We denote such a reflection by $s_{\alpha}$. It is given by 
$$
s_{\alpha}: V \rightarrow V, ~v \mapsto v-\frac{2(\alpha \mid v)}{(\alpha \mid \alpha)} \alpha.
$$
A \defn{finite reflection group} is a finite subgroup of the orthogonal group $O(V)$ which is generated by reflections. Such a group acts (by reflections) on the ambient vector space $V$.

Let $W \leq O(V)$ be a finite reflection group. Each reflection $s_{\alpha} \in W$ determines a reflecting hyperplane $\alpha^{\bot}$ and a line $\RR \alpha$ orthogonal to $\alpha^{\bot}$. In the collection of these lines $\RR \alpha$ induced by all reflections $s_{\alpha} \in W$, it is possible to select a collection of vectors which is stable under the action of $W$. This leads to the following definition:

\begin{Definition}
A finite set $\Phi$ of nonzero vectors in $V$ is called \defn{root system} if 
\begin{itemize}
\item[(R1)] $\Span_{\RR}(\Phi)=V$;
\item[(R2)] $\Phi \cap \RR \alpha = \{ \pm \alpha \}$ for all $\alpha \in \Phi$;
\item[(R3)] $s_{\alpha}(\Phi) = \Phi$ for all $\alpha \in \Phi$.
\end{itemize}
The elements of $\Phi$ are called \defn{roots} and we define the \defn{rank} of $\Phi$ as the dimension of $V$. 
\end{Definition}

Each finite reflection group can be realized as the group $\langle s_{\alpha} \mid \alpha \in \Phi \rangle$ for some root system $\Phi$. We therefore write $W_{\Phi}$ for this group. Inside a root system $\Phi$ we always find a \defn{positive system} $\Phi^+$ which contains a unique \defn{simple system} $\Delta \subseteq \Phi^+$; see \cite[Ch.1.3]{Hu90}.

Let $\Phi \neq \varnothing$ be a root system. Then $\Phi$ is \defn{reducible} if $\Phi = \Phi_1 \cupdot \Phi_2$, where $\Phi_1$ and $\Phi_2$ are nonempty root systems such that $( \alpha \mid \beta)=0$ whenever $\alpha \in \Phi_1$ and $\beta \in \Phi_2$. Otherwise $\Phi$ is called \defn{irreducible}.

Given a simple system $\Delta \subseteq \Phi$, the group $W:=W_{\Phi}$ is generated by the set $S:=\{ s_{\alpha} \mid \alpha \in \Delta \}$ subject to the relations
\begin{align} \label{equ:CoxeterRelations}
(s_{\alpha}s_{\beta})^{m(\alpha,\beta)}=1 ~(\alpha, \beta \in \Delta),
\end{align}
where $m(\alpha, \beta)$ is the order of $s_{\alpha}s_{\beta}$ in $W$; see \cite[Ch.1.9]{Hu90}.

In general, a group having such a presentation is called \defn{Coxeter group}. The pair $(W,S)$ is called \defn{Coxeter system} and $S$ is called the set of \defn{simple reflections}.

Moreover, note that all reflections in $W_{\Phi}$ are of the form $s_{\alpha}$ for some $\alpha \in \Phi$. We therefore call the set 
$$
T:=\{ s_{\alpha} \mid \alpha \in \Phi \}
$$
the \defn{set of reflections}; see \cite[Ch. 1.14]{Hu90}.

\medskip
The presentation of $W=W_{\Phi}$ given by (\ref{equ:CoxeterRelations}) can be encoded in an undirected graph $\Gamma$ with vertex set corresponding to $\Delta$. Two vertices corresponding to $\alpha, \beta \in \Delta$ with $\alpha \neq \beta$ are joined by $m(\alpha, \beta)-2$ edges if $m(\alpha, \beta) \geq 3$. 
The graph $\Gamma$ is called \defn{Coxeter graph}. If this graph is connected, the corresponding Coxeter system $(W,S)$ is called \defn{irreducible}. By abuse of notation, we sometimes just say that $W$ is irreducible.

The irreducible Coxeter systems $(W,S)$ with $W$ a finite reflection group, are classified by their Coxeter diagrams; see \cite[Section 2.4]{Hu90}.

A reflection group $W_{\Phi}$ is called \defn{Weyl group} and the root system $\Phi$ is called \defn{crystallographic} if 
\begin{itemize}
\item[(R4)] $\frac{2(\alpha \mid \beta)}{(\beta \mid \beta)} \in \ZZ$ for all $\alpha, \beta \in \Phi$.
\end{itemize}
The irreducible Weyl groups are classified in terms of Dynkin diagrams, see Figure \ref{fig:Dynkin}. When $\Phi$ is an irreducible crystallographic root system, there are at most two root lengths. Therefore roots are called \defn{short} or \defn{long}, depending on their respective lengths.

\begin{figure}
\centering
\begin{tikzpicture}[scale=1.75, thick,>=latex]
    \node (0) at (0,-0.2) [] {$A_n$ ($n \ge 1$)};
    \node (A) at (1,-0.2) [circle, draw, fill=black!50, inner sep=0pt, minimum width=4pt] {};
    \node (B) at (1.5,-0.2) [circle, draw, fill=black!50, inner sep=0pt, minimum width=4pt]{};
    \node (C) at (2,-0.2) []{...};
    \node (D) at (2.5,-0.2) [circle, draw, fill=black!50, inner sep=0pt, minimum width=4pt]{};
    \node (E) at (3,-0.2) [circle, draw, fill=black!50, inner sep=0pt, minimum width=4pt]{};
    
        \node (02) at (0,-0.9) [] {$B_n$ ($n \ge 2$)};
           \node (A2) at (1,-0.9) [circle, draw, fill=black!50, inner sep=0pt, minimum width=4pt] {};
    \node (B2) at (1.5,-0.9) [circle, draw, fill=black!50, inner sep=0pt, minimum width=4pt]{};
    \node (C2) at (2,-0.9) []{...};
    \node (D2) at (2.5,-0.9) [circle, draw, fill=black!50, inner sep=0pt, minimum width=4pt]{};
    \node (E2) at (3,-0.9) [circle, draw, fill=black!50, inner sep=0pt, minimum width=4pt]{};
    
    
        \node (03) at (0,-1.75) [] {$D_n$ ($n \ge 4$)};
        \node (A3) at (1,-1.75) [circle, draw, fill=black!50, inner sep=0pt, minimum width=4pt] {};
    \node (B3) at (1.5,-1.75) [circle, draw, fill=black!50, inner sep=0pt, minimum width=4pt]{};
    \node (C3) at (2,-1.75) []{...};
    \node (D3) at (2.5,-1.75) [circle, draw, fill=black!50, inner sep=0pt, minimum width=4pt]{};
    \node (E3) at (3,-1.5) [circle, draw, fill=black!50, inner sep=0pt, minimum width=4pt]{};
        \node (F3) at (3,-2) [circle, draw, fill=black!50, inner sep=0pt, minimum width=4pt]{};

            \node (04) at (0,-2.5) [] {$E_6$};
    \node (A4) at (1,-2.5) [circle, draw, fill=black!50, inner sep=0pt, minimum width=4pt] {};
    \node (B4) at (1.5,-2.5) [circle, draw, fill=black!50, inner sep=0pt, minimum width=4pt]{};
    \node (C4) at (2,-2.5) [circle, draw, fill=black!50, inner sep=0pt, minimum width=4pt]{};
    \node (D4) at (2.5,-2.5) [circle, draw, fill=black!50, inner sep=0pt, minimum width=4pt]{};
    \node (E4) at (3,-2.5) [circle, draw, fill=black!50, inner sep=0pt, minimum width=4pt]{};
        \node (F4) at (2,-3) [circle, draw, fill=black!50, inner sep=0pt, minimum width=4pt]{};
        
     \node (05) at (0,-3.5) [] {$E_7$};    
    \node (A5) at (1,-3.5) [circle, draw, fill=black!50, inner sep=0pt, minimum width=4pt] {};
    \node (B5) at (1.5,-3.5) [circle, draw, fill=black!50, inner sep=0pt, minimum width=4pt]{};
    \node (C5) at (2,-3.5) [circle, draw, fill=black!50, inner sep=0pt, minimum width=4pt]{};
    \node (D5) at (2.5,-3.5) [circle, draw, fill=black!50, inner sep=0pt, minimum width=4pt]{};
    \node (E5) at (3,-3.5) [circle, draw, fill=black!50, inner sep=0pt, minimum width=4pt]{};
        \node (F5) at (3.5,-3.5) [circle, draw, fill=black!50, inner sep=0pt, minimum width=4pt]{};
                \node (G5) at (2,-4) [circle, draw, fill=black!50, inner sep=0pt, minimum width=4pt]{};

           \node (06) at (0,-4.5) [] {$E_8$};
        \node (A6) at (1,-4.5) [circle, draw, fill=black!50, inner sep=0pt, minimum width=4pt] {};
    \node (B6) at (1.5,-4.5) [circle, draw, fill=black!50, inner sep=0pt, minimum width=4pt]{};
    \node (C6) at (2,-4.5) [circle, draw, fill=black!50, inner sep=0pt, minimum width=4pt]{};
    \node (D6) at (2.5,-4.5) [circle, draw, fill=black!50, inner sep=0pt, minimum width=4pt]{};
    \node (E6) at (3,-4.5) [circle, draw, fill=black!50, inner sep=0pt, minimum width=4pt]{};
        \node (F6) at (3.5,-4.5) [circle, draw, fill=black!50, inner sep=0pt, minimum width=4pt]{};
                \node (G6) at (2,-5) [circle, draw, fill=black!50, inner sep=0pt, minimum width=4pt]{};
         \node (H6) at (4,-4.5) [circle, draw, fill=black!50, inner sep=0pt, minimum width=4pt]{};

           \node (07) at (0,-5.5) [] {$F_4$};
        \node (A7) at (1,-5.5) [circle, draw, fill=black!50, inner sep=0pt, minimum width=4pt] {};
    \node (B7) at (1.5,-5.5) [circle, draw, fill=black!50, inner sep=0pt, minimum width=4pt]{};
    \node (C7) at (2,-5.5) [circle, draw, fill=black!50, inner sep=0pt, minimum width=4pt]{};
    \node (D7) at (2.5,-5.5) [circle, draw, fill=black!50, inner sep=0pt, minimum width=4pt]{};
        
                   \node (08) at (0,-6) [] {$G_2$};
                \node (A8) at (1,-6) [circle, draw, fill=black!50, inner sep=0pt, minimum width=4pt] {};
    \node (B8) at (1.5,-6) [circle, draw, fill=black!50, inner sep=0pt, minimum width=4pt]{};
    \draw[-] (A) to (B);
    \draw[-] (B) to (C);
    \draw[-] (C) to (D);
    \draw[-] (D) to (E);

    \draw[-] (A2) to (B2);
    \draw[-] (B2) to (C2);
    \draw[-] (C2) to (D2);
    \draw ([xshift=0.5]D2.south) to ([xshift=0.5]E2.south);
    \draw[-] ([xshift=0.5]D2.north) to ([xshift=0.5]E2.north);

    
    \draw[-] (A3) to (B3);
    \draw[-] (B3) to (C3);
    \draw[-] (C3) to (D3);
    \draw[-] (D3) to (E3);
    \draw[-] (D3) to (F3);
    
    \draw[-] (A4) to (B4);
    \draw[-] (B4) to (C4);
    \draw[-] (C4) to (D4);
    \draw[-] (D4) to (E4);
    \draw[-] (C4) to (F4);    
    
    \draw[-] (A5) to (B5);
    \draw[-] (B5) to (C5);
    \draw[-] (C5) to (D5);
    \draw[-] (D5) to (E5);
    \draw[-] (E5) to (F5);
    \draw[-] (C5) to (G5);
    
        \draw[-] (A6) to (B6);
    \draw[-] (B6) to (C6);
    \draw[-] (C6) to (D6);
    \draw[-] (D6) to (E6);
    \draw[-] (E6) to (F6);
    \draw[-] (C6) to (G6);
    \draw[-] (F6) to (H6);

    \draw[-] (A7) to (B7);
    \draw ([xshift=0.5]B7.south) to ([xshift=0.5]C7.south);
    \draw[-] ([xshift=0.5]B7.north) to ([xshift=0.5]C7.north);
    \draw[-] (C7) to (D7);
    
    \draw[-] (A8) to (B8);
    \draw ([xshift=0.5]A8.south) to ([xshift=0.5]B8.south);
    \draw[-] ([xshift=0.5]A8.north) to ([xshift=0.5]B8.north);

\end{tikzpicture}
\caption{Dynkin diagrams.} \label{fig:Dynkin}
\end{figure}
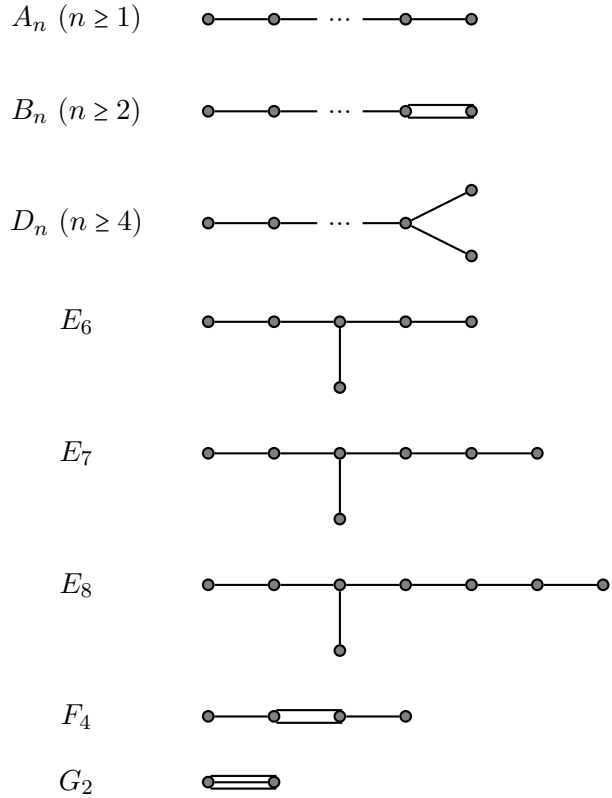

\subsection{Reflection Length and Reflection Factorizations} 
Let $W=W_{\Phi}$ be a finite reflection group with root system $\Phi$ and set of reflections $T$. Each $w \in W$ is a product of reflections in $T$. We define 
$$
\ell_T(w):= \min \{ k \in \ZZ_{\geq 0} \mid w=s_{\beta_1} \cdots s_{\beta_k}, ~\beta_i \in \Phi \}
$$
and call $\ell_T(w)$ the \defn{reflection length} of $w$. If $w= s_{\beta_1} \cdots s_{\beta_k}$ with $\beta_i \in \Phi$, we call $(s_{\beta_1}, \ldots , s_{\beta_k})$ a \defn{reflection factorization} for $w$. If in addition $k=\ell_T(w)$, we call $(s_{\beta_1}, \ldots , s_{\beta_k})$ to be \defn{reduced}. We denote by $\Red_T(w)$ the set of all reduced reflection factorizations for $w$. 

There is a geometric criterion for Weyl groups to decide whether a reflection factorization is reduced.
\begin{theorem}[{Carter's Lemma, \cite[Lemma 3]{Car72}}] \label{thm:CartersLemma}
Let $W_{\Phi}$ be a Weyl group and $\beta_1, \ldots , \beta_k \in \Phi$. Then $\ell_T(s_{\beta_1} \cdots s_{\beta_k})=k$ if and only if $\beta_1, \ldots , \beta_k \in \Phi$ are linearly independent. 
\end{theorem}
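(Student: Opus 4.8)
The plan is to control reflection length through the \emph{moved space} $\IM(w-\idop)$; since $w \in O(V)$ is orthogonal, $\IM(w-\idop)=\ker(w-\idop)^{\perp}$, so $\dim \IM(w-\idop)=\dim V-\dim\ker(w-\idop)$. First I would record two linear-algebra facts. \textbf{(i)} For \emph{any} reflection factorization $w=s_{\gamma_1}\cdots s_{\gamma_m}$ one has $\IM(w-\idop)\subseteq \Span_{\RR}(\gamma_1,\ldots,\gamma_m)$. This follows by induction on $m$: writing $w=s_{\gamma_1}w'$ with $w'=s_{\gamma_2}\cdots s_{\gamma_m}$, we have $w-\idop=s_{\gamma_1}(w'-\idop)+(s_{\gamma_1}-\idop)$, and $\IM(s_{\gamma_1}-\idop)=\RR\gamma_1$ together with $s_{\gamma_1}\big(\Span_{\RR}(\gamma_2,\ldots,\gamma_m)\big)\subseteq \Span_{\RR}(\gamma_1,\ldots,\gamma_m)$ closes the induction. \textbf{(ii)} If $\beta_1,\ldots,\beta_k$ are linearly independent, then $\ker(s_{\beta_1}\cdots s_{\beta_k}-\idop)=\bigcap_{i=1}^{k}\beta_i^{\perp}$. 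The inclusion $\supseteq$ is immediate; for $\subseteq$ I argue by induction on $k$: if $w(v)=v$, put $w''=s_{\beta_2}\cdots s_{\beta_k}$, so $w''(v)=s_{\beta_1}(v)$ and hence $v-w''(v)$ lies in $\RR\beta_1\cap \Span_{\RR}(\beta_2,\ldots,\beta_k)=\{0\}$ by (i) and independence; this forces $v\perp\beta_1$ and $w''(v)=v$, after which the inductive hypothesis gives $v\in\bigcap_{i\ge 2}\beta_i^{\perp}$.

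Granting (i) and (ii), the ``if'' direction is immediate. If the $\beta_i$ are independent, then (ii) gives $\dim\IM(w-\idop)=k$, while for an arbitrary factorization $w=s_{\gamma_1}\cdots s_{\gamma_m}$ part (i) yields $m\ge \dim\Span_{\RR}(\gamma_1,\ldots,\gamma_m)\ge \dim\IM(w-\idop)=k$. Hence $\lt(w)\ge k$, and since the given factorization has length $k$ we conclude $\lt(w)=k$.

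For the converse I argue contrapositively: assuming $\beta_1,\ldots,\beta_k$ linearly dependent, I produce a shorter factorization. Choose the least $m$ with $\beta_m\in\Span_{\RR}(\beta_1,\ldots,\beta_{m-1})$, and set $v:=s_{\beta_1}\cdots s_{\beta_{m-1}}$; by (i) and (ii) this is reduced with $\IM(v-\idop)=\Span_{\RR}(\beta_1,\ldots,\beta_{m-1})\ni\beta_m$, and $\lt(v)=m-1$. The crux is to show $\lt(vs_{\beta_m})=m-2$. Two ingredients combine. A parity argument: since $\Det(s_\gamma)=-1$, every $g$ satisfies $\Det(g)=(-1)^{\lt(g)}$, so $\lt(vs_{\beta_m})\equiv m \pmod 2$; together with the triangle inequality for $\lt$ this leaves only $\lt(vs_{\beta_m})\in\{m-2,m\}$. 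A dimension count rules out $m$: were $\lt(vs_{\beta_m})=m$, a reduced factorization of $vs_{\beta_m}$ would have $m$ independent roots, so (ii) would give $\dim\IM(vs_{\beta_m}-\idop)=m$, contradicting $\IM(vs_{\beta_m}-\idop)\subseteq \Span_{\RR}(\beta_1,\ldots,\beta_{m-1},\beta_m)=\Span_{\RR}(\beta_1,\ldots,\beta_{m-1})$, which has dimension $m-1$. Thus $\lt(vs_{\beta_m})=m-2$, and therefore $\lt(w)\le (m-2)+(k-m)=k-2<k$, so the factorization is not reduced.

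The geometric ``if'' direction is routine once the moved space is introduced; the delicate point is the converse. The natural temptation is to invoke the Cartan--Dieudonn\'e theorem to write $w$ as a product of $\dim\IM(w-\idop)$ reflections, but that theorem produces reflections in $O(V)$ that need not lie in $W$. I expect the main obstacle to be circumventing exactly this, and the argument above does so by only ever \emph{removing} reflections (via the length drop at $vs_{\beta_m}$) rather than constructing new ones, so that every reflection involved stays in $T$. I note finally that this route never uses the crystallographic hypothesis (R4), so the lemma in fact holds for all finite reflection groups; the restriction to Weyl groups in the statement is inessential for this proof.
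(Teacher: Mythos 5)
The paper itself gives no proof of this lemma --- it is quoted directly from Carter's 1972 paper --- so your proposal must stand on its own, and its first half does. Facts (i) and (ii) about the moved space $\IM(w-\idop)$ are proved correctly, and they give the ``if'' direction exactly as in the classical argument: independence of $\beta_1,\ldots,\beta_k$ forces $\dim\IM(w-\idop)=k$ by (ii), while (i) bounds the length of every reflection factorization of $w$ from below by this dimension, so $\ell_T(w)=k$.

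The converse, however, is circular at precisely the step you yourself identify as the crux. To rule out $\ell_T(vs_{\beta_m})=m$ you assert that ``a reduced factorization of $vs_{\beta_m}$ would have $m$ independent roots.'' That assertion --- reduced reflection factorizations have linearly independent roots --- is exactly the implication you are in the middle of proving (it is the contrapositive of ``dependent implies non-reduced''), and nothing you have established supplies it: fact (ii) takes independence as a \emph{hypothesis}, and the already-proved ``if'' direction is the reverse implication. Nor can the circle be broken by inducting on $k$: the hypothetical reduced factorization of $vs_{\beta_m}$ has length $m$, and in the essential case $m=k$, so no inductive hypothesis about shorter tuples reaches it. What is genuinely needed is the upper bound $\ell_T(u)\le\dim\IM(u-\idop)$ realized by reflections lying in $W$, i.e.\ a Cartan--Dieudonn\'e theorem internal to $W$; this is the real content of Carter's lemma, and your parity-plus-dimension count does not bypass it but presupposes it. Carter proves it by an induction that rewrites the factorization itself, showing that a product of reflections along linearly dependent roots can always be shortened by two; a modern alternative shows that every $u\neq\idop$ in $W_\Phi$ admits a root $\alpha\in\Phi$ with $\alpha\in\IM(u-\idop)$ (for instance via Steinberg's theorem on pointwise stabilizers), writes $\alpha=u(z)-z$, and checks that $s_\alpha u$ fixes both $z$ and $\ker(u-\idop)$, so its fixed space is strictly larger and one can descend by induction. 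Consequently your closing remarks are also off: the argument does not ``only ever remove reflections'' (the removal step is the unproven one), and while the lemma is indeed true for all finite reflection groups without hypothesis (R4), your proof as written does not establish it.
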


Since the set $T$ of reflections is closed under conjugation, there is a natural way to obtain new reflection factorizations from a given (not necessarily reduced) reflection factorization. The \defn{braid group} on $n$ strands, denoted $\mathcal{B}_n$, is the group with generators 
$\sigma_1,\ldots , \sigma_{n-1}$ subject to the relations
\begin{align*}
 \sigma_i \sigma_j & = \sigma_j \sigma_i \quad \text{for } |i-j| > 1,\\
\sigma_i \sigma_{i+1} \sigma_i & = \sigma_{i+1} \sigma_i \sigma_{i+1} \quad \text{for } i=1,\dots,n-2.
\end{align*}
It acts on the set $T^n$ of $n$-tuples of reflections as
\begin{align*}
\sigma_i (t_1 ,\ldots , t_n ) &= (t_1 ,\ldots , t_{i-1} , \hspace*{5pt} t_i t_{i+1} t_i,
\hspace*{5pt} \phantom{t_{i+1}}t_i\phantom{t_{i+1}}, \hspace*{5pt} t_{i+2} ,
\ldots , t_n), \\
\sigma_i^{-1} (t_1 ,\ldots , t_n ) &= (t_1 ,\ldots , t_{i-1} , \hspace*{5pt} \phantom
{t_i}t_{i+1}\phantom{t_i}, \hspace*{5pt} t_{i+1}t_it_{i+1}, \hspace*{5pt} t_{i+2} ,
\ldots , t_n).
\end{align*}
\noindent We call this action of $\mathcal{B}_n$ on $T^n$ the \defn{Hurwitz action} and an orbit of this action a \defn{Hurwitz orbit}. It is easy to see that this action restricts to the reduced reflection factorizations of a given element $w \in W$, that is, it restricts to the set $\Red_T(w)$ for each $w \in W$.

\subsection{Admissible diagrams and Carter diagrams} Let $W_{\Phi}$ be a Weyl group. It is a nontrivial result of Carter \cite[Theorem C]{Car72} that each $w \in W_{\Phi}$ can be written as $w= w_1w_2$ with $w_1, w_2 \in W_{\Phi}$ involutions. Furthermore we can write
$$
w_1= s_{\beta_1} \cdots s_{\beta_k}, ~w_2=s_{\beta_{k+1}} \cdots s_{\beta_{k+h}}
$$
such that $\{ \beta_1 \ldots ,\beta_k \}$ and $\{ \beta_{k+1} \ldots ,\beta_{k+h} \}$ are sets of mutually orthogonal roots and $\ell_T(w)= k+h$; see \cite[Lemma 5]{Car72}. Corresponding to such a factorization, Carter defines a graph $\Gamma$: 
\begin{itemize}
\item[(A)] The vertices of $\Gamma$ correspond to the roots $\beta_{1} \ldots ,\beta_{k+h}$.
\item[(B)] Two distinct roots $\beta_i, \beta_j$ are joined by 
$$
\frac{2(\beta_i \mid \beta_j)}{(\beta_i \mid \beta_i)} \cdot \frac{2(\beta_j \mid \beta_i)}{(\beta_j \mid \beta_j)}.
$$
edges.
\end{itemize}
Furthermore, the graph $\Gamma$ is called an \defn{admissible diagram} (for $w$) if each subgraph of $\Gamma$ which is a cycle contains an even number of vertices. Admissible diagrams were classified by Carter; see \cite[Section 5]{Car72}.

We extend Carter's definition as follows to all sets of linearly independent roots resp. to all reduced reflection factorizations.

\begin{Definition} \label{def:CarterDiagram}
Let $W_{\Phi}$ be a Weyl group with crystallographic root system $\Phi$. To each set of linearly independent roots $\{\beta_1, \ldots , \beta_m \} \subseteq \Phi$ we associate a diagram $\Gamma$ given by the conditions (A) and (B) above. We call $\Gamma$ a \defn{Carter diagram}. The \defn{type} of $\Gamma$ is defined to be the (Dynkin-)type of the smallest root subsystem $\Phi' \subseteq \Phi$ which contains $\{\beta_1, \ldots , \beta_m \}$.
\end{Definition}

\begin{remark}
$\phantom{4}$
\begin{enumerate}
\item[(a)] For a set of roots $R \subseteq \Phi$ the smallest root subsystem $\Phi' \subseteq \Phi$ containing $R$ is given by $W_R(R)$, where $W_R:= \langle s_{\alpha} \mid \alpha \in R \rangle$. If  $\Phi'$ is irreducible, then the Carter diagram associated to $R$ is connected. 
\item[(b)] A Carter diagram does not change if we replace a root by its corresponding negative root.
\end{enumerate}
\end{remark}

\begin{example}
Each Dynkin diagram is a Carter diagram. In fact, a Carter diagram without cycles is a Dynkin diagram. This is proved in \cite[Lemma 8]{Car72} for admissible diagrams, but the proof given there also works for arbitrary Carter diagrams.
\end{example}

\begin{example}
We consider a root system $\Phi$ of type $D_4$. This can be realized as 
$$
\Phi = \{ \pm e_i \pm e_j \mid 1 \leq i < j \leq 4 \},
$$
where $\{e_1,e_2,e_3,e_4\}$ is the canonical base of $\RR^4$. The Carter diagram $\Gamma_1$ attached to the set of roots 
$$
R_1 := \{ e_1-e_2, e_1+e_2, e_2-e_3, e_4-e_1 \} \subseteq \Phi
$$
is given by the following diagram.
\begin{figure}[H]
\centering
\begin{tikzpicture}[scale=0.75, thick,>=latex]

  \node (21) at (3,0) [circle, draw, inner sep=0pt, minimum width=4pt]{};
  \node (31) at (3,-2) [circle, draw, inner sep=0pt, minimum width=4pt]{};
  \node (41) at (2,-1) [circle, draw, inner sep=0pt, minimum width=4pt]{};
  \node (51) at (4,-1) [circle, draw, inner sep=0pt, minimum width=4pt]{};

  \draw[-] (21) to (41);
  \draw[-] (31) to (41);
  
  \draw[-] (21) to (51);
  \draw[-] (31) to (51);

\end{tikzpicture}
\end{figure}
The smallest root subsystem of $\Phi$ containing $R_1$ is the root system $\Phi$ itself. Therefore $\Gamma_1$ is a Carter diagram of type $D_4$.

On the other hand, the Carter diagram $\Gamma_2$ attached to the set of roots 
$$
R_2 := \{ e_1-e_3, e_2-e_3, e_3-e_4 \} \subseteq \Phi
$$
is given by the following diagram.
\begin{figure}[H]
\centering
\begin{tikzpicture}[scale=0.75, thick,>=latex]

  \node (21) at (3,0) [circle, draw, inner sep=0pt, minimum width=4pt]{};
  \node (41) at (2,-1) [circle, draw, inner sep=0pt, minimum width=4pt]{};
  \node (51) at (4,-1) [circle, draw, inner sep=0pt, minimum width=4pt]{};

  \draw[-] (21) to (41);
  
  \draw[-] (21) to (51);
  \draw[-] (41) to (51);

\end{tikzpicture}
\end{figure}
The smallest root subsystem of $\Phi$ containing $R_2$ is
$$
\{e_i-e_j \mid i \neq j, ~1 \leq i,j \leq 4\} \subseteq \Phi.
$$
This system is of type $A_3$ and so is $\Gamma_2$. 
\end{example}

\begin{remark} \label{rem:MinCarter}
In the definition of a Carter diagram we demand the set of roots $R:=\{\beta_1, \ldots , \beta_m \}$ to be linearly independent. By Carter's Lemma \ref{thm:CartersLemma} this is equivalent to $s_{\beta_1} \cdots s_{\beta_m}$ being reduced. In particular, to each $w \in W_{\Phi}$ and each reduced reflection factorization $(s_{\beta_1}, \ldots , s_{\beta_m})$ of $w$, we have an associated Carter diagram $\Gamma$ induced by the set $\{\beta_1, \ldots , \beta_m \}$. 
Therefore we can describe Carter diagrams entirely by reduced reflection factorizations. If $(s_{\beta_1}, \ldots , s_{\beta_m}) \in \Red_T(w)$, then the vertices of the diagram $\Gamma$ correspond to the reflections $s_{\beta_i}$ ($1 \leq i \leq m$). By Carter's Lemma \ref{thm:CartersLemma} there is an edge between $s_{\beta_i}$ and $s_{\beta_j}$ ($i \neq j$) if and only if $s_{\beta_i}$ and $s_{\beta_j}$ do not commute. The number of edges between $s_{\beta_i}$ and $s_{\beta_j}$ is given by $m_{ij}-2$, where $m_{ij}$ is the order of $s_{\beta_i} s_{\beta_j}$. In this case we also call $\Gamma$ a \defn{Carter diagram associated to $w$}, or more precisely a \defn{Carter diagram associated to $(s_{\beta_1}, \ldots , s_{\beta_m})$}. The linear independence of $R$ also implies that the root subsystem $\Phi':=W_R(R)$ is of rank $m$ and by \cite[Theorem 1.1]{BW18} we have $W_{\Phi'} = \langle s_{\beta_1}, \ldots , s_{\beta_m} \rangle$.
\end{remark}

\medskip
Carter diagrams are invariant under conjugation in the following sense.

\begin{Lemma} \label{lem:ConjCarterDiag}
Let $W_{\Phi}$ be a Weyl group with crystallographic root system $\Phi$, $w \in W_{\Phi}$ and $(s_{\beta_1}, \ldots , s_{\beta_m}) \in \Red_T(w)$. Then $(s_{\beta_1}, \ldots , s_{\beta_m})$ (resp. $\{\beta_1, \ldots , \beta_m \}$) and $(s_{\beta_1}^x, \ldots , s_{\beta_m}^x) \in \Red_T(w^x)$ (resp. $\{x(\beta_1), \ldots , x(\beta_m) \}$) yield the same Carter diagram for all $x \in W_{\Phi}$. 
\end{Lemma}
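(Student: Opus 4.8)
The plan is to reduce everything to the single algebraic identity $x s_\beta x^{-1} = s_{x(\beta)}$, valid for any $x \in O(V)$ and any root $\beta$, together with the fact that elements of $W_{\Phi}$ act as orthogonal transformations and hence preserve the form $(- \mid -)$. Granting this identity, both assertions---that the conjugated tuple is a reduced reflection factorization of $w^x$, and that it induces the same Carter diagram---follow almost formally.

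First I would establish the conjugation identity. For $v \in V$ one computes
$$x s_\beta x^{-1}(v) = x\Bigl(x^{-1}(v) - \tfrac{2(\beta \mid x^{-1}(v))}{(\beta \mid \beta)}\beta\Bigr) = v - \tfrac{2(\beta \mid x^{-1}(v))}{(\beta \mid \beta)}\, x(\beta),$$
and since $x$ is orthogonal we may rewrite $(\beta \mid x^{-1}(v)) = (x(\beta) \mid v)$ and $(\beta \mid \beta) = (x(\beta) \mid x(\beta))$, which turns the right-hand side into $s_{x(\beta)}(v)$. Thus $s_{\beta}^x = s_{x(\beta)}$ for every $x \in W_{\Phi} \leq O(V)$.

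Next I would verify that $(s_{\beta_1}^x, \ldots, s_{\beta_m}^x)$ lies in $\Red_T(w^x)$. Telescoping the conjugations gives $s_{\beta_1}^x \cdots s_{\beta_m}^x = x(s_{\beta_1} \cdots s_{\beta_m})x^{-1} = w^x$, so the tuple is a reflection factorization of $w^x$; by the identity its underlying roots are exactly $x(\beta_1), \ldots, x(\beta_m)$. Since $(s_{\beta_1}, \ldots, s_{\beta_m})$ is reduced, Carter's Lemma (Theorem \ref{thm:CartersLemma}) tells us $\{\beta_1, \ldots, \beta_m\}$ is linearly independent; as $x$ is a linear isomorphism, $\{x(\beta_1), \ldots, x(\beta_m)\}$ is linearly independent as well, and applying Carter's Lemma in the other direction shows the conjugated tuple is reduced.

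Finally, for the equality of Carter diagrams I would use the bijection of vertex sets $\beta_i \mapsto x(\beta_i)$ (equivalently $s_{\beta_i} \mapsto s_{\beta_i}^x$) and check that the edge count prescribed by condition (B) is preserved. Because $x$ is orthogonal we have $(x(\beta_i) \mid x(\beta_j)) = (\beta_i \mid \beta_j)$ for all $i, j$, so
$$\frac{2(x(\beta_i) \mid x(\beta_j))}{(x(\beta_i) \mid x(\beta_i))} \cdot \frac{2(x(\beta_j) \mid x(\beta_i))}{(x(\beta_j) \mid x(\beta_j))} = \frac{2(\beta_i \mid \beta_j)}{(\beta_i \mid \beta_i)} \cdot \frac{2(\beta_j \mid \beta_i)}{(\beta_j \mid \beta_j)},$$
giving the same edge multiplicity between corresponding vertices; hence the two diagrams agree under the above bijection. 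There is no genuine obstacle here: the only point requiring care is the conjugation identity in the first step, and even that is a routine computation once orthogonality of $W_{\Phi}$ is invoked.
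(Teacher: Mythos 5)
Your proposal is correct and follows essentially the same route as the paper: the paper's proof likewise rests on the identity $s_{\beta_i}^x = s_{x(\beta_i)}$ together with $(\beta_i \mid \beta_j) = (x(\beta_i) \mid x(\beta_j))$, from which the equality of edge multiplicities is immediate. Your additional verification that the conjugated tuple lies in $\Red_T(w^x)$ (via Carter's Lemma and linear independence) is a detail the paper leaves implicit, but it does not change the nature of the argument.
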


\begin{proof}
We have $s_{\beta_i}^x= s_{x(\beta_i)}$ and $(\beta_i \mid \beta_j) = (x(\beta_i)\mid x(\beta_j))$, hence
$$
\frac{2(\beta_i \mid \beta_j)}{(\beta_i \mid \beta_i)} \cdot \frac{2(\beta_j \mid \beta_i)}{(\beta_j \mid \beta_j)} = 
\frac{2(x(\beta_i) \mid x(\beta_j))}{(x(\beta_i) \mid x(\beta_i))} \cdot \frac{2(x(\beta_j) \mid x(\beta_i))}{(x(\beta_j) \mid x(\beta_j))}
$$
\end{proof}

\begin{Definition}
Let $W=W_{\Phi}$ be a finite reflection group with root system $\Phi$ of rank $n$. An element $w \in W$ is called \defn{quasi-Coxeter element} if there exists a reduced reflection factorization $(s_{\beta_1}, \ldots, s_{\beta_n}) \in \Red_T(w)$ such that $\langle s_{\beta_1}, \ldots, s_{\beta_n} \rangle = W$. 

A very important example of a quasi-Coxeter element is the following: An element $c \in W$ is called \defn{Coxeter element} if there exists a simple system $\{ \alpha_1 , \ldots , \alpha_n \} \subseteq \Phi$ such that $c = s_{\alpha_1} \cdots s_{\alpha_n}$.  
\end{Definition}

\begin{remark}
$\phantom{4}$
\begin{enumerate}
\item[(a)] Given a crystallographic root system $\Phi$, the Carter diagrams of the same type as $\Phi$ are precisely the Carter diagrams attached to reduced reflection factorizations of quasi-Coxeter elements in $W_{\Phi}$ (see also Remark \ref{rem:MinCarter}).
\item[(b)] Baumeister and the author give in \cite{BW18} another characterization of quasi-Coxeter elements in terms of bases of the (co-)root lattice.
\end{enumerate}
\end{remark}

\begin{example}
Let $\Phi$ be a root system of type $A_n$ and $\{\alpha_1 , \ldots, \alpha_n\} \subseteq \Phi$ a simple system. Then $c= s_{\alpha_1} \cdots s_{\alpha_n}$ is a Coxeter element. The Carter diagram associated to $\{\alpha_1 , \ldots, \alpha_n\}$ resp. to $(s_{\alpha_1}, \ldots , s_{\alpha_n}) \in \Red_T(c)$ is the Dynkin diagram of type $A_n$. Applying the Hurwitz action yields
\begin{align*}
(\sigma_1 \cdots \sigma_{n-1})(\sigma_2 \cdots \sigma_{n-1}) \cdots \sigma_{n-1} (s_{\alpha_1}, \ldots , s_{\alpha_n}) & =  (s_{\alpha_n}^{s_{\alpha_1} \cdots s_{\alpha_{n-1}}}, \ldots ,  s_{\alpha_2}^{s_{\alpha_1}}, s_{\alpha_1})\\
{} & = (s_{\alpha_1 + \cdots + \alpha_n}, \ldots , s_{\alpha_1 + \alpha_2}, s_{\alpha_1}) \in \Red_T(c).
\end{align*}
The Carter diagram associated to the set $\{ \alpha_1 + \cdots + \alpha_n, \ldots , \alpha_1 + \alpha_2, \alpha_1 \}$ resp. to the reduced reflection factorization $(s_{\alpha_n}^{s_{\alpha_1} \cdots s_{\alpha_{n-1}}}, \ldots ,s_{\alpha_2}^{s_{\alpha_1}} ,s_{\alpha_1} ) \in \Red_T(c)$ is the complete graph on $n$ vertices.
\end{example}

The following lemma allows us to investigate Carter diagrams via quasi-Coxeter elements in irreducible Weyl groups. 
\begin{Lemma} \label{lem:CarterIrredQuasiCox}
Each Carter diagram is the disjoint union of connected Carter diagrams associated to reduced reflection factorizations of quasi-Coxeter elements in irreducible Weyl groups. 
\end{Lemma}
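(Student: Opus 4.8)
The plan is to reduce to the irreducible case by splitting the ambient root subsystem. Let $\Gamma$ be the Carter diagram attached to a linearly independent set $R=\{\beta_1,\dots,\beta_m\}\subseteq\Phi$ and put $\Phi':=W_R(R)$, the smallest root subsystem containing $R$, where $W_R=\langle s_\beta\mid\beta\in R\rangle$. By Remark~\ref{rem:MinCarter} we have $\rk\Phi'=m$ and $W_{\Phi'}=W_R$. I would decompose $\Phi'$ into its irreducible components $\Phi'=\Phi^{(1)}\cupdot\dots\cupdot\Phi^{(k)}$, which are pairwise orthogonal, and set $R_i:=R\cap\Phi^{(i)}$. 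Since every root of $\Phi'$ lies in exactly one component, $R=R_1\cupdot\dots\cupdot R_k$, and each $R_i$ is nonempty because every root of $\Phi^{(i)}$ is $W_R$-conjugate to a root of $R$ lying in the same component.

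First, the easy half: $\Gamma$ is the disjoint union of the full subdiagrams $\Gamma_i$ induced by the $R_i$. Indeed, roots in distinct components are orthogonal, so by condition~(B) of Definition~\ref{def:CarterDiagram} there is no edge joining a vertex of $R_i$ to a vertex of $R_j$ for $i\neq j$.

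The main work is to recognise each $\Gamma_i$ as the Carter diagram of a quasi-Coxeter element in the irreducible Weyl group $W_{\Phi^{(i)}}$. The key identity is
$$
\Phi^{(i)}=W_{R_i}(R_i),\qquad W_{R_i}:=\langle s_\gamma\mid\gamma\in R_i\rangle .
$$
To obtain it I would use that $W_{\Phi'}=W_R$ is the direct product of the component Weyl groups, and therefore maps each component $\Phi^{(i)}$ into itself, while a reflection $s_\gamma$ with $\gamma$ in one component fixes every root of another component. Thus for $\beta\in R_i$ the orbit $W_R(\beta)$ stays inside $\Phi^{(i)}$ and already equals $W_{R_i}(\beta)$; taking the union over $\beta\in R_i$ and intersecting $\Phi'=W_R(R)$ with $\Phi^{(i)}$ yields the displayed equality. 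By the remark following Definition~\ref{def:CarterDiagram} this says that $\Phi^{(i)}$ is the smallest root subsystem containing $R_i$, so $\Gamma_i$ has irreducible type $\Phi^{(i)}$ and is connected, and moreover $W_{R_i}=W_{\Phi^{(i)}}$. Finally $R_i\subseteq R$ is linearly independent and, by the same identity, spans $\Span_{\RR}(\Phi^{(i)})$, so $|R_i|=\rk\Phi^{(i)}$. Hence $(s_\gamma)_{\gamma\in R_i}$, in any fixed order, is a reduced reflection factorization (Theorem~\ref{thm:CartersLemma}) of a quasi-Coxeter element of the irreducible Weyl group $W_{\Phi^{(i)}}$, with associated Carter diagram $\Gamma_i$, as required.

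The step I expect to be the genuine obstacle is the identity $\Phi^{(i)}=W_{R_i}(R_i)$: one must check that restricting to a single component loses no roots, i.e. that the full $W_R$-orbit of a root $\beta\in R_i$ lies in $\Phi^{(i)}$ and is produced using only the reflections indexed by $R_i$. This is exactly where the direct-product structure $W_{\Phi'}=W_{\Phi^{(1)}}\times\dots\times W_{\Phi^{(k)}}$ and the orthogonality of the components are needed.
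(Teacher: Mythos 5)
Your proof is correct, and structurally it is the same as the paper's: both decompose the smallest root subsystem $\Phi'=W_R(R)$ into pairwise orthogonal irreducible components, set $R_i:=R\cap\Phi^{(i)}$, establish the identity $\Phi^{(i)}=W_{R_i}(R_i)$, and read off that each connected component of $\Gamma$ is the Carter diagram of a reduced reflection factorization of a quasi-Coxeter element in the irreducible Weyl group $W_{\Phi^{(i)}}$. Where you genuinely differ is the justification of the key identity: the paper simply cites \cite[Theorem 1.1]{BW18} for it (and likewise for the statement that $s_{\beta_1}\cdots s_{\beta_m}$ is quasi-Coxeter in $W_{\Phi'}$), whereas you prove it directly -- reflections in roots of one component fix every other component pointwise, hence the subgroups $W_{R_j}$ commute with one another, so for $\beta\in R_i$ one gets $W_R(\beta)=W_{R_i}(\beta)\subseteq\Span_{\RR}(R_i)\cap\Phi'\subseteq\Phi^{(i)}$, and intersecting $\Phi'=\bigcup_j W_{R_j}(R_j)$ with $\Phi^{(i)}$ yields the equality. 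This elementary argument is sound, and it has the merit of making explicit exactly where orthogonality and the product decomposition $W_{\Phi'}=W_{\Phi^{(1)}}\times\cdots\times W_{\Phi^{(k)}}$ enter; what the paper's citation buys is brevity and consistency, since \cite[Theorem 1.1]{BW18} is in any case already invoked in Remark \ref{rem:MinCarter} for the inputs $W_{\Phi'}=W_R$ and $\rk\Phi'=m$ that both arguments rely on. Your remaining bookkeeping (no edges between components by orthogonality, nonemptiness of each $R_i$, $|R_i|=\rk\Phi^{(i)}$, reducedness via Carter's Lemma \ref{thm:CartersLemma}, connectedness of $\Gamma_i$ from the remark following Definition \ref{def:CarterDiagram}) is correct and fills in steps the paper leaves implicit.
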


\begin{proof}
Let $W=W_{\Phi}$ be a Weyl group with crystallographic root system $\Phi$. Consider a set of linearly independet roots $R:=\{ \beta_1 , \ldots, \beta_m \} \subseteq \Phi$ and put $\Phi':=W_R(R)$. By \cite[Theorem 1.1]{BW18} $s_{\beta_1} \cdots s_{\beta_m}$ is a quasi-Coxeter element in $W_{\Phi'}$. We can decompose $\Phi'$ as $\Phi'= \Phi_1 \cupdot \cdots \cupdot \Phi_k$ such that $W_{\Phi_i}$ is irreducible and such that the $\Phi_i$ are pairwise orthogonal. Again by \cite[Theorem 1.1]{BW18}, we have $\Phi_i = W_{R_i}(R_i)$ with $R_i:=\Phi_i \cap R$. Therefore each connected component of the Carter diagram attached to $\{ \beta_1 , \ldots, \beta_m \}$ is given by a Carter diagram associated to the reduced reflection factorization of a quasi-Coxeter element in $W_{\Phi_i}$ for some $i \in [k]$. 
\end{proof}

\subsection{Construction of Carter diagrams}
The aim of this section is to give a procedure of constructing all possible Carter diagrams of types $A_n$, $B_n$ and $D_n$. For the exceptional types we will provide a complete list based on computations.

\subsubsection{{\bf Carter diagrams of type $A_n$}} \label{sec:CarterDiagA}
Let $n \geq 1$ be an integer. For the type $A_n$ we will use a result of Kluitmann \cite{Klu88}. It is well known that the Weyl group $W$ of type $A_n$ can be identified with the symmetric group $\Sym([n+1])$. In this setting the set of reflections can be identified with the set of transpositions. As in \cite{Klu88} we define for $m \geq n$ and $w \in W$ the following (possibly empty) sets
\begin{align*}
\Xi^{n,m} & := \{ (t_1, \ldots , t_m) \mid t_i \in \Sym([n+1])~\text{a transposition}, ~ \langle t_1, \ldots , t_m \rangle = \Sym([n+1]) \}\\
\Xi^{n,m}_w & := \{ (t_1, \ldots , t_m) \in \Xi^{n,m} \mid w = t_1 \cdots t_m  \} .
\end{align*}

\begin{remark}
In type $A_n$ each quasi-Coxeter element is a Coxeter element and Coxeter elements correspond to $(n+1)$-cycles in $\Sym([n+1])$ \cite[Remark 6.6]{BGRW}. If $c$ is a $(n+1)$-cycle in $\Sym([n+1])$, then $\Xi^{n,n}_c= \Red_T(c)$, where $T$ is the set of transpositions in $\Sym([n+1])$. Moreover, we have 
$$
\Xi^{n,n} = \bigcup_{(n+1)-\text{cycle }c} \Red_T(c).
$$
\end{remark}

\medskip
For each element $(t_1, \ldots , t_m) \in \Xi^{n,m}$, Kluitmann defines a graph as follows:
\begin{itemize}
\item The vertices correspond to the set $\{ t_1, \ldots , t_m \}$.
\item Two vertices corresponding to $t_i$ and $t_j$ are connected by an edge if $t_it_j \neq t_j t_i$. 
\end{itemize}
We call such a diagram a \defn{Kluitmann diagram}. Denote by $\mathcal{A}^{n,m}$ (resp. by $\mathcal{A}^{n,m}_w$ for $w \in \Sym([n+1])$) the set of Kluimann diagrams given by the elements of $\Xi^{n,m}$ (resp. the elements of $\Xi^{n,m}_w$). As a direct consequence of our definitions of Carter diagrams and Kluitmann diagrams, we obtain:
\begin{Proposition}
The set of Carter diagrams of type $A_n$ is given by the set $\mathcal{A}^{n,n}$.
\end{Proposition}

\medskip
Kluitmann provides a procedure to construct all diagrams in $\mathcal{A}^{n,m}$.

\begin{theorem}[{Kluitmann, \cite[Theorem 1]{Klu88}}] \label{thm:Kluitmann}
The following construction yields all elements in $\mathcal{A}^{n,m}$: 
\begin{itemize}
\item[(a)] Choose $m' \in \NN$ with $n \leq m' \leq m$. Take any connected graph $\Gamma$ on $m'$ vertices with $\Gamma$ the union of subgraphs $\Gamma_1, \ldots , \Gamma_k$ such that:
\begin{itemize}
\item[(i)] $\Gamma_1, \ldots , \Gamma_k$ are complete graphs.
\item[(ii)] $\Gamma_i$ and $\Gamma_j$ ($i \neq j$) intersect in at most one vertex.
\item[(iii)] Every vertex of $\Gamma$ belongs to at most two of the subgraphs $\Gamma_i$.
\item[(iv)] $\sum_{i=1}^k(|\Gamma_i|-1) = (m'-1)+(m'-n)$.
\end{itemize}
\item[(b)] Attach additional vertices $v_1 , v_2, \ldots , v_{m-m'}$ to $\Gamma$ by the following procedure:

Suppose that $\Psi:= \Gamma \cup \{v_1 , \ldots, v_{\ell}\}$, with $v_1 , \ldots, v_{\ell}$ already attached to $\Gamma$. Choose a vertex $v$ of $\Psi$, and define $v_{k+1}$ to be its ``duplication'', that is $v$ and $v_{k+1}$ are connected with exactly the same vertices; there is no edge between $v$ and $v_{k+1}$. 
\end{itemize}

\end{theorem}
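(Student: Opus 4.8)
The plan is to translate everything into graphs on the vertex set $[n+1]$ and to recognise the Kluitmann diagram as a \emph{line graph}. I would first set up the dictionary: a transposition $(a\,b)\in\Sym([n+1])$ corresponds to the edge $\{a,b\}$ of the complete graph on $[n+1]$; two transpositions commute precisely when the corresponding edges are equal or disjoint, and they fail to commute precisely when the edges share exactly one endpoint. Given a tuple $(t_1,\ldots,t_m)\in\Xi^{n,m}$, let $G$ be the simple graph on $[n+1]$ whose edges are the distinct transpositions among the $t_i$, say $m'$ of them. The generation condition $\langle t_1,\ldots,t_m\rangle=\Sym([n+1])$ is equivalent to $G$ being connected and spanning all $n+1$ vertices; in particular $n\le m'\le m$. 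Unwinding the definitions, the full subgraph of the Kluitmann diagram spanned by the distinct transpositions is exactly the line graph of $G$, while each repeated occurrence of a transposition contributes a vertex with the same neighbourhood and no edge to its twins -- a ``duplication'' in the sense of (b).

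Second, I would prove the Krausz-type correspondence that underlies (a). For the completeness direction (every Kluitmann diagram is produced), take the canonical decomposition: for $p\in[n+1]$ let $\Gamma_p$ be the set of distinct transpositions incident to $p$. Any two members of $\Gamma_p$ share the point $p$, so each $\Gamma_p$ is a clique; together they cover all vertices and edges (every non-commutation comes from a shared point), any two of them meet in at most the single vertex $(p\,q)$, and each transposition $(a\,b)$ lies in exactly the two cliques $\Gamma_a,\Gamma_b$. This yields (i)--(iii) with $k=n+1$ nonempty cliques, and the degree identity $\sum_p(|\Gamma_p|-1)=2m'-(n+1)$ gives (iv). For the soundness direction (every output is a Kluitmann diagram), I would run the reconstruction in reverse. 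Writing $a$ (resp.\ $b$) for the number of vertices of $\Gamma$ lying in two (resp.\ one) of the $\Gamma_i$, a short count turns (iv) into the identity $k+b=n+1$. I then build $G$ on the vertex set consisting of the $k$ cliques together with one extra ``pendant'' vertex for each of the $b$ singly-covered vertices of $\Gamma$; this is exactly $n+1$ vertices by the identity. Each vertex of $\Gamma$ is declared to be the edge of $G$ joining the two vertices attached to it; condition (ii) makes this injective, connectedness of $\Gamma$ makes $G$ connected, and nonemptiness of the cliques makes $G$ spanning. Hence $\Gamma$ is the line graph of a connected spanning graph on $n+1$ vertices, i.e.\ a genuine Kluitmann diagram.

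Finally I would fold in the duplication step (b): once the distinct-transposition part has been realised via (a) as the line graph of $G$ with $m'$ edges, attaching $m-m'$ duplicated vertices corresponds exactly to repeating transpositions in the tuple until it has length $m$, and the admissible range $n\le m'\le m$ is precisely the range of edge-numbers of a connected spanning graph on $n+1$ vertices that can occur inside a length-$m$ tuple. Combining the two directions shows that the construction produces exactly the diagrams in $\mathcal{A}^{n,m}$.

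The main obstacle is the bookkeeping in the soundness direction: one has to check that an arbitrary clique decomposition allowed by (i)--(iv) really reconstructs a \emph{simple} graph with no leftover half-edges, and this is exactly where the calibrated identity $k+b=n+1$ coming from (iv) is used to account for the singly-covered vertices by pendant edges. A secondary subtlety is that the Kluitmann diagram is defined on the \emph{set} of transpositions, so equal tuple entries must be threaded through the duplication mechanism of (b) rather than silently collapsed; tracking these multiplicities is what makes the tuple length come out to $m$ rather than $m'$.
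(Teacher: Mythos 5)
There is no proof in the paper to compare yours against: Theorem \ref{thm:Kluitmann} is imported verbatim from Kluitmann's paper \cite{Klu88} as an external result, and the author only applies it later (Corollary \ref{cor:Kluitmann}, Theorems \ref{thm:CarterTypeB} and \ref{thm:CarterTypeD}, Proposition \ref{prop:ClusterIsCarterTypeA}) without reproducing an argument. Judged as a self-contained replacement proof, your proposal is correct. The dictionary (transposition $\leftrightarrow$ edge of a graph $G$ on $[n+1]$; non-commutation $\leftrightarrow$ exactly one common endpoint; generation of $\Sym([n+1])$ $\leftrightarrow$ $G$ connected and spanning) reduces the statement to a Krausz-type clique-cover characterization of line graphs, and the two counting identities that carry the whole weight of condition (iv) both check out: in the completeness direction $\sum_{p}(|\Gamma_p|-1)=\sum_p (d_p-1) = 2m'-(n+1)=(m'-1)+(m'-n)$, where $d_p$ is the degree of $p$ in $G$; in the soundness direction $\sum_i |\Gamma_i| = 2a+b=2m'-b$ combined with (iv) yields $k+b=n+1$, which is exactly what makes your reconstructed graph (one vertex per clique plus one pendant vertex per singly covered vertex of $\Gamma$) have $n+1$ vertices. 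Conditions (ii) and (iii) give injectivity and well-definedness of the reconstruction, connectivity passes back and forth, and step (b) matches repeated tuple entries. The details you leave implicit (adjacency in $\Gamma$ corresponds exactly to a common endpoint of the reconstructed edges, pendant vertices have degree one so cannot create adjacencies) are routine.

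One point you should state explicitly rather than leave as a closing aside, since it is the only place where a wrong choice would sink the proof: the vertices of a Kluitmann diagram must be indexed by the $m$ tuple positions, not by the set of distinct transpositions, despite the paper's phrase ``the vertices correspond to the set $\{ t_1, \ldots , t_m \}$''. Under the literal set reading, a tuple with repeated entries yields a diagram on fewer than $m$ vertices, which the construction (all of whose outputs have exactly $m$ vertices) could never produce, so the theorem would be false as stated; the position reading is also forced by the way $\mathcal{A}^{n-1,n}$ is used in Theorem \ref{thm:CarterTypeD}, where the projection $\overline{\varphi}$ typically produces repeated transpositions that must survive as separate, non-adjacent duplicate vertices. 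Your argument uses the correct reading throughout, but your final sentence asserts the opposite convention (``defined on the set'') and should be rewritten to resolve the ambiguity rather than restate it.
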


\begin{corollary}[{Carter diagrams of type $A_n$}] \label{cor:Kluitmann}
The Carter diagrams of type $A_n$ are given by the set $\mathcal{A}^{n.n}$. More precisely, a Carter diagram of type $A_n$ is a connected graph $\Gamma$ on $n$ vertices which is the union of subgraphs $\Gamma_1, \ldots , \Gamma_k$ such that:
\begin{itemize}
\item[(i)] $\Gamma_1, \ldots , \Gamma_k$ are complete graphs.
\item[(ii)] $\Gamma_i$ and $\Gamma_j$ ($i \neq j$) intersect in at most one vertex.
\item[(iii)] Every vertex of $\Gamma$ belongs to at most two of the subgraphs $\Gamma_i$.
\item[(iv)] $\sum_{i=1}^k(|\Gamma_i|-1) = n-1$.
\end{itemize}
\end{corollary}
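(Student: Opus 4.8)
The plan is to deduce the corollary directly from Kluitmann's construction (Theorem \ref{thm:Kluitmann}) by specializing to the case $m=n$. By the preceding Proposition the set of Carter diagrams of type $A_n$ coincides with $\mathcal{A}^{n,n}$, so it suffices to determine exactly which graphs are produced by Theorem \ref{thm:Kluitmann} when $m=n$, and to check that these are precisely the graphs described by conditions (i)--(iv) of the corollary.

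First I would analyze step (a). The constraint $n \le m' \le m = n$ forces $m'=n$, so the only graphs $\Gamma$ produced in step (a) are connected graphs on exactly $n$ vertices which are unions of complete subgraphs $\Gamma_1,\ldots,\Gamma_k$ satisfying (i)--(iii). The cardinality condition (iv) of Theorem \ref{thm:Kluitmann} then simplifies to
$$
\sum_{i=1}^k (|\Gamma_i|-1) = (m'-1)+(m'-n) = (n-1)+(n-n) = n-1,
$$
which is exactly condition (iv) of the corollary.

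Next I would observe that step (b) is vacuous in this situation: it attaches $m-m' = n-n = 0$ additional vertices, so the duplication procedure is never invoked and $\Gamma$ is returned unchanged. Consequently the diagrams yielded by Theorem \ref{thm:Kluitmann} for $m=n$ are precisely the connected graphs on $n$ vertices satisfying (i)--(iv) of the corollary, and together with the identification $\mathcal{A}^{n,n} = \{\text{Carter diagrams of type } A_n\}$ this establishes the claim.

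I do not expect any genuine obstacle here: the entire combinatorial content is carried by Theorem \ref{thm:Kluitmann}, and the corollary is merely its restriction to the diagonal $m=n$. The only two points requiring any care are the identification of Carter diagrams of type $A_n$ with Kluitmann diagrams in $\mathcal{A}^{n,n}$ --- which rests on the fact that in $\Sym([n+1])$ two transpositions either commute or have product of order $3$, so that the Carter diagram records exactly the non-commuting pairs and thus agrees with the Kluitmann diagram --- and the arithmetic simplification of condition (iv) displayed above; both are entirely routine.
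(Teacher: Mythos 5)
Your proposal is correct and matches the paper's (implicit) argument: the corollary is stated there without a separate proof precisely because it is the specialization of Theorem \ref{thm:Kluitmann} to $m=n$, where $m'=n$ is forced, step (b) attaches no vertices, and condition (iv) reduces to $\sum_{i=1}^k(|\Gamma_i|-1)=n-1$, combined with the preceding Proposition identifying Carter diagrams of type $A_n$ with $\mathcal{A}^{n,n}$. Your added remark that the identification rests on transpositions in $\Sym([n+1])$ either commuting or having product of order $3$ is exactly the content the paper leaves as a direct consequence of the definitions.
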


\begin{remark}
Note that a complete graph cannot be written as the union of proper subgraphs such that the properties (i)-(iv) of Corollary \ref{cor:Kluitmann} hold. As a consequence, we obtain that the descomposition $\Gamma = \Gamma_1 \cup \ldots \cup \Gamma_k$ given by Corollary \ref{cor:Kluitmann} is unique up to permutation of the factors. 
\end{remark}

\begin{example} \label{ex:TypeADiag}
By Corollary \ref{cor:Kluitmann}, the graph
\begin{figure}[H]
\centering
\begin{tikzpicture}[scale=0.75, thick,>=latex]

  \node (1) at (2,0) [circle, draw, inner sep=0pt, minimum width=4pt]{};
  \node (3) at (3,-1) [circle, draw, inner sep=0pt, minimum width=4pt]{};
  \node (3b) at (3,-0.65) []{$v$};
  \node (2) at (2,-2) [circle, draw, inner sep=0pt, minimum width=4pt]{};
  \node (4) at (4,0) [circle, draw, inner sep=0pt, minimum width=4pt]{};
  \node (5) at (4,-2) [circle, draw, inner sep=0pt, minimum width=4pt]{};
  \node (6) at (5,-1) [circle, draw, inner sep=0pt, minimum width=4pt]{};
  \node (6b) at (5,-0.65) []{$w$};
  \node (7) at (6,0) [circle, draw, inner sep=0pt, minimum width=4pt]{};
  \node (8) at (6,-2) [circle, draw, inner sep=0pt, minimum width=4pt]{};

  \draw[-] (1) to (2);
  \draw[-] (1) to (3);
  \draw[-] (2) to (3);
  \draw[-] (3) to (4);
  \draw[-] (3) to (5);
  \draw[-] (3) to (6);
  \draw[-] (4) to (5);
  \draw[-] (4) to (6);
  \draw[-] (5) to (6);
  \draw[-] (6) to (7);
  \draw[-] (6) to (8);
  \draw[-] (7) to (8);

\end{tikzpicture}
\end{figure}
is a Carter diagram of type $A_8$. To see this, let $\Gamma_1$ and $\Gamma_2$ be the complete graphs on three vertices intersecting the complete graph $\Gamma_3$ on four vertices in the center of the picture in the vertices $v$ and $w$, respectively.
\end{example}

\medskip
\subsubsection{{\bf Carter diagrams of type $B_n$}} \label{sec:CarterDiagB}
Let $W_{\Phi}$ be a Weyl group with $\Phi$ of type $B_n$. It is well known (see for instance \cite[Chapter 8.1]{BB05}) that $W_{\Phi}$ can be realized by signed permutations, that is, as the group 
$$
S_n^B:= \{ \pi \in \Sym([\pm n]) \mid \pi(-i)= -\pi(i) \},
$$
where the group operation is given by composition. The set of reflections can be identified with the set 
\begin{align} \label{equ:RefB}
T_n^B:= \{ (i,j)(-i,-j) \mid 1 \leq i < |j| \leq n \} \cup \{ (i,-i) \mid i \in [n] \}.
\end{align}
The group homomorphism 
\begin{align*}
\theta: \Sym([n]) & \rightarrow \Aut(\ZZ_2^n)\\
\pi & \mapsto [\ZZ_2^n \rightarrow \ZZ_2^n, (d_1, \ldots , d_n) \mapsto (d_{\pi(1)}, \ldots , d_{\pi(n)})]
\end{align*}
yields an isomorphism 
$$
S_n^B \cong \ZZ_2^n \rtimes_{\theta} \Sym([n]).
$$
For our purposes and for later use we state this isomorphism explicitely on the generating set $T_n^B$ of $S_n^B$ given in (\ref{equ:RefB}). Let $i,j \in \NN$ with $i <j$:
\begin{align} \label{equ:IsoB}
\varphi_B: S_n^B & \rightarrow  \ZZ_2^n \rtimes_{\theta} \Sym([n])\\
(i,j)(-i,-j) & \mapsto (0, (i,j)) \notag \\
(i,-j)(-i,j) & \mapsto (e_i + e_j, (i,j)) \notag \\
(i,-i) & \mapsto (e_i, \idop) \notag
\end{align}

We will now describe how to obtain all Carter diagrams of type $B_n$ by describing all Carter diagrams of reduced reflection factorizations of quasi-Coxeter elements in $S_n^B$ (see also Remark \ref{rem:MinCarter}). Note that in type $B_n$ each quasi-Coxeter element is a Coxeter element \cite[Lemma 6.4]{BGRW}. Therefore let $(t_1, \ldots, t_n)$ be a reduced reflection factorization of a Coxeter element. By Lemma \ref{lem:ConjCarterDiag} and by the proof of \cite[Lemma 6.4]{BGRW} we can assume that 
$$
\{t_1, \ldots , t_n \} = \{ (1,-1), (2, i_2)(-2,-i_2), \dots , (n,i_n)(-n,-i_n) \} =:R,
$$
where $i_j \in \{ 1, \ldots , j-1 \}$ for each $j \in \{ 2, \ldots, n \}$. Put
$$
R' := \{  (2, i_2)(-2,-i_2), \dots , (n,i_n)(-n,-i_n) \} \subseteq R.
$$

\begin{Proposition} \label{prop:CarterDB1}
The Carter diagram on vertex set $R'$ is (isomorphic to) a Carter diagram of type $A_{n-1}$.
\end{Proposition}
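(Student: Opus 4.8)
The plan is to transport the whole question into the symmetric group $\Sym([n])$ by means of the isomorphism $\varphi_B$ of (\ref{equ:IsoB}), and then to invoke the classification of type-$A$ Carter diagrams. Since each $i_j$ satisfies $1 \le i_j < j$, every $t_j = (j,i_j)(-j,-i_j)$ is a reflection of the first kind in (\ref{equ:RefB}), so (\ref{equ:IsoB}) yields $\varphi_B(t_j) = (0, \tau_j)$ with $\tau_j := (i_j,j) \in \Sym([n])$ for $j = 2, \ldots, n$. In particular $\varphi_B$ maps $R'$ into the subgroup $\{(0,\sigma) \mid \sigma \in \Sym([n])\}$, on which the $\ZZ_2^n$-component is trivial, and the problem reduces to a statement about the $n-1$ transpositions $\tau_2, \ldots, \tau_n$.

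Next I would argue that $\varphi_B$ identifies the two diagrams. As the $\ZZ_2^n$-parts vanish, $\varphi_B(t_i)\varphi_B(t_j) = (0, \tau_i\tau_j)$, so $t_it_j$ and $\tau_i\tau_j$ have the same order for all $i,j$. By Remark~\ref{rem:MinCarter} the presence of an edge and its weight are determined solely by this order, hence the bijection $t_j \mapsto \tau_j$ is an isomorphism from the Carter diagram on $R'$ onto the Kluitmann diagram of $(\tau_2, \ldots, \tau_n)$. (Concretely, the roots attached to $R'$ are the long roots $\pm(e_{i_j}-e_j)$, whose mutual inner products coincide with those of the corresponding type-$A$ roots, so no information is lost.)

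It then remains to pin down the type. The set $R'$, being a subset of the linearly independent set $R$ (Carter's Lemma~\ref{thm:CartersLemma}), is itself linearly independent, so it does define a Carter diagram, whose type is the Dynkin type of $\Phi':=W_{R'}(R')$. Consider the graph on the vertex set $[n]$ having an edge $\{i_j,j\}$ for every $j \in \{2,\ldots,n\}$. Because $i_j < j$, a short induction shows that $\{1,\ldots,j\}$ is connected for each $j$; thus the graph is connected, and having $n$ vertices and $n-1$ edges it is a spanning tree. Transpositions indexed by the edges of a spanning tree generate the full symmetric group, so $\langle \tau_2, \ldots, \tau_n\rangle = \Sym([n])$, that is $(\tau_2,\ldots,\tau_n) \in \Xi^{n-1,n-1}$ and $\Phi'$ is of type $A_{n-1}$. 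By the Proposition identifying the Carter diagrams of type $A_{n-1}$ with the Kluitmann diagrams in $\mathcal{A}^{n-1,n-1}$ (cf.\ Corollary~\ref{cor:Kluitmann}), the Kluitmann diagram of $(\tau_2,\ldots,\tau_n)$, and hence by the previous paragraph the Carter diagram on $R'$, is a Carter diagram of type $A_{n-1}$.

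The computations are all elementary; the only step needing care is the reduction in the second paragraph, namely the observation that passing to $\Sym([n])$ loses no information precisely because every reflection of $R'$ lands in the sign-free part $\{(0,\sigma)\}$ under $\varphi_B$. Once this is secured, the spanning-tree argument for generation together with the earlier classification of type-$A$ diagrams completes the proof without further obstacles.
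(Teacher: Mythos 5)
Your proof is correct and takes essentially the same route as the paper's: transport $R'$ into $\Sym([n])$ via $\varphi_B$, observe that commutation (indeed the orders of all pairwise products) is preserved because the $\ZZ_2^n$-components vanish, and use that the resulting transpositions generate $\Sym([n])$ to identify the diagram as a type $A_{n-1}$ Carter diagram. You fill in details the paper leaves implicit --- notably the spanning-tree argument for why $\langle \tau_2,\ldots,\tau_n\rangle = \Sym([n])$ and the explicit preservation of edge weights rather than mere commutation --- but the underlying argument is the same.
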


\begin{proof}
Two elements $(j,i_j)(-j, -i_j), (k, i_k)(-k,-i_k) \in R'$ commute in $S_n^B$ if and only if $(j,i_j)$ and $(k,i_k)$ commute in $\Sym([n])$. The set $\{  (2, i_2), \dots , (n,i_n) \}$ generates $\Sym([n])$ and therefore yields a Carter diagram of type $A_{n-1}$. 
\end{proof}

Denote by $\Gamma'$ the Carter diagram induced by $R'$ and let $\{ k_1, \ldots, k_{\ell} \}$ be the subset of $\{2, \ldots, n\}$ such that $i_{k_j}=1$ for all $j \in [\ell]$. By property (i) in Corollary \ref{cor:Kluitmann}, the graph $\Gamma'$ decomposes as the union of complete graphs.   

\begin{Proposition} \label{prop:CarterDB2}
The full subgraph $\Gamma''$ of $\Gamma'$ on vertex set 
$$
R'' := \{ (k_1,1)(-k_1,-1), \ldots ,(k_{\ell},1)(-k_{\ell},-1) \} \subseteq R'
$$
is one of the complete graphs in the decomposition of $\Gamma'$.
\end{Proposition}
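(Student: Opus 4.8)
The plan is to transport the whole question into the symmetric group by means of the commutation criterion recorded in the proof of Proposition \ref{prop:CarterDB1}, and then to recognise $\Gamma''$ as a maximal complete subgraph of $\Gamma'$. Recall from that proof that two reflections $(j,i_j)(-j,-i_j)$ and $(k,i_k)(-k,-i_k)$ of $R'$ are adjacent in $\Gamma'$ precisely when the transpositions $(j,i_j)$ and $(k,i_k)$ fail to commute in $\Sym([n])$, i.e.\ precisely when they share a point. First I would record a structural observation: since $i_j \in \{1,\ldots,j-1\}$ for every $j$, the assignment $j \mapsto i_j$ is a strictly decreasing ``parent'' function, so the transpositions $(2,i_2),\ldots,(n,i_n)$ are exactly the edges of a tree on the vertex set $[n]$ rooted at $1$; in particular this edge set contains no triangle.

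Next I would locate $R''$ inside this picture. By definition $R''$ consists of those reflections $(k,1)(-k,-1)$ with $i_k=1$, that is, of the tree-edges incident to the root $1$. Any two of these edges share the common point $1$, so the corresponding transpositions pairwise fail to commute and $\Gamma''$ is a complete subgraph of $\Gamma'$; this is condition (i) of Corollary \ref{cor:Kluitmann}. By the uniqueness statement following Corollary \ref{cor:Kluitmann}, the complete graphs $\Gamma_i$ in the decomposition of $\Gamma'$ are precisely its maximal complete subgraphs, so it remains only to show that $\Gamma''$ is not properly contained in a larger clique of $\Gamma'$.

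For this maximality I would argue by contradiction. Suppose some $(k,i_k)(-k,-i_k)\in R'\setminus R''$, so that $i_k\neq 1$, were adjacent in $\Gamma'$ to every vertex of $R''$. As $k>1$ and $i_k\neq 1$, the transposition $(k,i_k)$ does not contain $1$, and hence to meet each $(k_j,1)$ it would have to contain every one of the distinct points $k_1,\ldots,k_\ell$. But $(k,i_k)$ moves only the two points $k$ and $i_k$, which is impossible once $\ell\geq 2$: already containing $k_1$ and $k_2$ would force $(k,i_k)=(k_1,k_2)$, an edge joining the two children $k_1,k_2$ of the root and thus a triangle $1,k_1,k_2$ in the tree, contradicting its acyclicity. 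Hence no such reflection exists, $\Gamma''$ is a maximal clique, and therefore one of the complete graphs in the decomposition of $\Gamma'$. (The degenerate case $\ell=1$, where $\Gamma''$ is a single vertex, is read as the trivial block attached at the root.)

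The step I expect to be the crux is this maximality argument: completeness of $\Gamma''$ is immediate from the common point $1$, but pinning down that $R''$ is exactly a block --- neither too small nor straddling several blocks --- is what genuinely uses the acyclicity coming from $i_j<j$, through the principle that a family of pairwise-intersecting transpositions containing no triangle must form a single star.
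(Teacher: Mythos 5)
Your proof is correct, and it reaches the conclusion by a genuinely different mechanism than the paper's. Both arguments begin identically: adjacency in $\Gamma'$ is detected by the projected transpositions sharing a point, and $\Gamma''$ is complete because all of $(k_1,1),\ldots,(k_\ell,1)$ share the point $1$. The difference lies in the crucial exclusion step. The paper argues group-theoretically: if some $(j,i_j)(-j,-i_j)\in R'\setminus R''$ were adjacent to two distinct vertices $(k_p,1)(-k_p,-1)$ and $(k_q,1)(-k_q,-1)$ of $R''$, then $\{j,i_j\}=\{k_p,k_q\}$, so that reflection would equal $(k_p,1)(-k_p,-1)^{(k_q,1)(-k_q,-1)}$, making a generator redundant and contradicting $\langle R\rangle=S_n^B$; this in fact yields the stronger statement that every vertex outside $R''$ has at most one neighbour in $R''$. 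You instead consume only the combinatorial normal form $i_j<j$: the transpositions are the edges of a tree on $[n]$ rooted at $1$, $R''$ is the star at the root, and an outside transposition adjacent to all of $R''$ would force the triangle $1,k_1,k_2$ inside the tree, contradicting acyclicity. Your route is more elementary (acyclicity is the combinatorial shadow of the generation hypothesis the paper invokes), but it proves only maximality of the clique $\Gamma''$, so you must additionally know that the blocks in the Kluitmann decomposition are exactly the maximal cliques; you attribute this to the uniqueness remark after Corollary \ref{cor:Kluitmann}, which is defensible at the paper's own level of informality (the paper's corresponding shortcut is the unproven claim that it ``is enough'' to control edges leaving $\Gamma''$), though strictly it rests on condition (iv), which rules out cycles of blocks. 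Finally, both you and the paper are loose about the degenerate case $\ell=1$, where $\Gamma''$ is a single vertex and literally is not a maximal clique; you at least flag this explicitly.
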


\begin{proof}
It is easy to see that $\Gamma''$ is a complete graph. Therefore the assertion is true if $R' = R''$. Thus assume that $R'' \subsetneq R'$. To show that $\Gamma''$ respects the properties (i)-(iv) of Corollary \ref{cor:Kluitmann} it is enough to show that $\Gamma''$ is connected by exactly one edge with the rest of the graph $\Gamma'$. Since $\Gamma'$ is connected there exists at least one of those edges. Now assume that there exists $(i,i_j)(-i,-i_j) \in R' \setminus R''$ and $p,q \in [\ell], ~p \neq q$ such that $(i,i_j)(-i,-i_j)$ is connected to both $(k_p,1)(-k_p,-1) \in R''$ and $(k_q,1)(-k_q,-1) \in R''$ by an edge. This implies that $\{i, i_j\} = \{k_p,k_q\}$ and therefore $(k_p,1)(-k_p,-1)^{(k_q,1)(-k_q,-1)} = (i,i_j)(-i,-i_j)$, contradicting the fact that $\langle R \rangle = S_n^B$. 
\end{proof}

\medskip
Let $\Gamma$ be the Carter diagram on vertex set $R$. By Proposition \ref{prop:CarterDB1} the induced subgraph $\Gamma'$ on vertex set $R'$ is a Carter diagram of type $A_{n-1}$. This graph is described by the construction in Section \ref{sec:CarterDiagA}. By Proposition \ref{prop:CarterDB2} the induced subgraph $\Gamma''$ of $\Gamma'$ on vertex set $R''$ is a complete graph in the decomposition of $\Gamma'$. The element $(1,-1) \in R$ commutes with all elements in $R' \setminus R''$ and does not commute with any element in $R''$. The full subgraph of $\Gamma$ on vertex set $R'' \cup \{ (1,-1) \}$ is a complete graph on $\ell + 1$ vertices. Denote by $\Gamma^{(1)}$ the graph obtained from $\Gamma$ by suppressing the weights of the edges (that is, all weights are equal to $1$). Our previous arguments show that $\Gamma^{(1)}$ is a Carter diagram of type $A_n$. The element $(1,-1) \in R$ corresponds to a reflection $s_{\alpha}$ in $W_{\Phi}$ with $\alpha \in \Phi$ a short root. All elements in $R \setminus \{ (1,-1) \}$ correspond to reflections $s_{\beta}$ with $\beta \in \Phi$ a long root. Therefore all edges in $\Gamma$ adjacent with the vertex corresponding to $(1,-1)$ have the weight $2$. 

We summarize our construction:

\begin{theorem}[{Carter diagrams of type $B_n$}] \label{thm:CarterTypeB}
All Carter diagrams of type $B_n$ are obtained by the following construction:
\begin{itemize}
\item Take any possible Carter diagram $\Gamma$ of type $A_n$.
\item Take any vertex $v$ of $\Gamma$ such that $\Gamma \setminus \{ v \}$ is still connected.
\item All edges adjacent with $v$ have weight $2$.
\end{itemize}
We call $v$ the \defn{distinguished vertex} of $\Gamma$.
\end{theorem}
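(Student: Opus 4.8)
The plan is to read the theorem as the assertion that the displayed construction produces \emph{exactly} the Carter diagrams of type $B_n$, and to prove the two inclusions separately. The inclusion ``every Carter diagram of type $B_n$ arises from the construction'' is already contained in the discussion preceding the theorem: after conjugating (Lemma~\ref{lem:ConjCarterDiag}) into the normal form $R=\{(1,-1),(2,i_2)(-2,-i_2),\dots,(n,i_n)(-n,-i_n)\}$, Propositions~\ref{prop:CarterDB1} and~\ref{prop:CarterDB2} together with the weight computation show that the unweighted diagram $\Gamma^{(1)}$ is of type $A_n$, that deleting $v=(1,-1)$ leaves the connected type $A_{n-1}$ diagram $\Gamma'$ (so $v$ meets the connectivity requirement), and that every edge at $v$ has weight $2$. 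Hence I would concentrate on the reverse inclusion: each output of the construction is genuinely a Carter diagram of type $B_n$.

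For the reverse inclusion I would reconstruct an explicit root configuration realizing the produced weighted diagram. Start from a type $A_n$ Carter diagram $\Gamma$ and a vertex $v$ with $\Gamma\setminus\{v\}$ connected, and let $\widehat\Gamma$ denote the weighted diagram obtained by giving the edges at $v$ weight $2$. For the block graphs described by Corollary~\ref{cor:Kluitmann} a vertex is a non-cut vertex exactly when it lies in a single clique of the (unique) decomposition; thus $v$ lies in precisely one clique $\Gamma''$, and $\Gamma':=\Gamma\setminus\{v\}$ is obtained by deleting one vertex from that single clique. Since this decreases $\sum_i(|\Gamma_i|-1)$ by exactly $1$, condition~(iv) continues to hold for the remaining $n-1$ vertices, so $\Gamma'$ is a connected type $A_{n-1}$ Carter diagram and $\Gamma''\setminus\{v\}=:R''$ is one of its cliques.

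The heart of the matter is to realize $\Gamma'$ in the normal form used by the forward direction, with the clique $R''$ carried by the transpositions moving a prescribed letter. Here I would use that a type $A_{n-1}$ Carter diagram is exactly the line graph of a spanning tree $\mathcal T$ on $[n]$: the $n-1$ realizing transpositions generate $\Sym([n])$, so their edges connect the $n$ letters, and linear independence of the roots $e_a-e_b$ forces these edges to form a forest, hence (being connected on $n$ vertices) a spanning tree. Under this correspondence the maximal cliques of $\Gamma'$ are the stars of $\mathcal T$, so $R''$ is the star at a vertex $c$; rooting $\mathcal T$ at $c$, setting $c=1$, and labelling the remaining letters increasingly yields transpositions $(j,i_j)$ with $i_j<j$ for which $R''=\{(j,1)\mid i_j=1\}$ and the letter $1$ occurs in no other transposition. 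This is precisely the normal form of the forward direction.

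Finally I would lift and verify, which is routine. Put $t_1:=(1,-1)$ and $t_j:=(j,i_j)(-j,-i_j)$ and $R:=\{t_1,\dots,t_n\}$. The roots of $R\setminus\{t_1\}$ are the $e_j-e_{i_j}$, lying in the hyperplane $\sum_k x_k=0$, whereas the short root $e_1$ of $t_1$ does not, so $R$ is linearly independent; the images under $\varphi_B$ give $\langle R\setminus\{t_1\}\rangle\cong\Sym([n])$, and conjugating $(e_1,\idop)$ by these permutations yields all $(e_i,\idop)$, so $\langle R\rangle=S_n^B$ and the type is $B_n$. As in Proposition~\ref{prop:CarterDB1} the induced diagram on $R\setminus\{t_1\}$ is $\Gamma'$ with weight-$1$ edges, $t_1$ fails to commute with $t_j$ exactly when $1\in\{j,i_j\}$, i.e.\ when $t_j\in R''$, and the short/long computation $\tfrac{2(e_1\mid e_1-e_b)}{(e_1\mid e_1)}\cdot\tfrac{2(e_1-e_b\mid e_1)}{(e_1-e_b\mid e_1-e_b)}=2$ gives exactly those edges weight $2$; hence the Carter diagram of $R$ is $\widehat\Gamma$. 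The main obstacle is the normalization in the third paragraph: everything else is forced once $\Gamma'$ is presented as the line graph of a tree rooted at the centre of the distinguished clique, so it is there that linear independence must be used to pin down the tree and the private letter.
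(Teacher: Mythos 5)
Your reading of Theorem \ref{thm:CarterTypeB} as a two-way characterization is the right one, and your overall argument is sound; but it is worth stating precisely how it sits relative to the paper. The paper's own proof consists exactly of the discussion preceding the theorem: conjugation into the normal form $R=\{(1,-1),(2,i_2)(-2,-i_2),\ldots,(n,i_n)(-n,-i_n)\}$ via Lemma \ref{lem:ConjCarterDiag}, then Propositions \ref{prop:CarterDB1} and \ref{prop:CarterDB2}, then the short-root weight computation --- in other words, only the inclusion you dispose of in your first paragraph (every Carter diagram of type $B_n$ is an output of the construction). The converse inclusion, which the word ``any'' in the statement implicitly asserts and which the paper actually uses later (in the proof of Proposition \ref{prop:ClusterIsCarterTypeA} one concludes that $\overline{Q}$ \emph{is} a Carter diagram of type $B_n$), is nowhere spelled out in the paper. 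Your third and fourth paragraphs --- realizing an abstract type $A_{n-1}$ Carter diagram as the line graph of a spanning tree on $[n]$ (generation gives connectivity, linear independence of the roots $e_a-e_b$ excludes cycles), rooting that tree at the centre $c$ of the distinguished clique so that $c=1$ and each $i_j<j$, and then lifting to $S_n^B$ and checking independence, generation and the weight-$2$ edges --- supply precisely this missing half. So your proposal is not a rederivation of the paper's proof but a completion of it; the dictionary ``decomposition cliques of the Kluitmann diagram $\leftrightarrow$ stars of the tree'' is the key idea that makes the converse work.

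One step needs repair. In your second paragraph you claim that $R'':=\Gamma''\setminus\{v\}$ is one of the cliques of the (unique) decomposition of $\Gamma'$, and in the third paragraph you infer from ``maximal cliques are stars'' that $R''$ is the star at some letter $c$. This is correct when $|R''|\geq 2$, but it fails as stated when the distinguished vertex $v$ has valency $1$, i.e. $\Gamma''=\{v,u\}$ and $R''=\{u\}$: a singleton is then in general \emph{not} a clique of the unique decomposition of $\Gamma'$ (take $\Gamma$ to be a triangle with a pendant vertex $v$ attached at $u$; then $\Gamma'$ is the triangle, whose only decomposition clique is the triangle itself), and it certainly need not be a maximal clique, so the quoted inference does not apply to it. The gap closes in two lines: condition (iii) of Corollary \ref{cor:Kluitmann}, applied to $\Gamma$, shows that $u$ lies in at most one decomposition clique of $\Gamma$ besides $\Gamma''$, hence in at most one maximal star of the tree realization of $\Gamma'$; therefore at least one endpoint $c$ of the tree edge corresponding to $u$ is a leaf of $\mathcal{T}$, and the star at that leaf is exactly $\{u\}=R''$. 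Choosing this $c$ as the letter $1$, the rest of your lifting argument goes through unchanged.
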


\begin{example} \label{ex:FromAtoB}
Consider the type $A_n$ Carter diagram from Example \ref{ex:TypeADiag}.
\begin{figure}[H]
\centering
\begin{tikzpicture}[scale=0.75, thick,>=latex]

  \node (1) at (2,0) [circle, draw, inner sep=0pt, minimum width=4pt]{};
  \node (3) at (3,-1) [circle, draw, inner sep=0pt, minimum width=4pt]{};
  \node (2) at (2,-2) [circle, draw, inner sep=0pt, minimum width=4pt]{};
  \node (4) at (4,0) [circle, draw, inner sep=0pt, minimum width=4pt]{};
  \node (4b) at (4,0.35) []{$v$};
  \node (5) at (4,-2) [circle, draw, inner sep=0pt, minimum width=4pt]{};
  \node (6) at (5,-1) [circle, draw, inner sep=0pt, minimum width=4pt]{};
  \node (7) at (6,0) [circle, draw, inner sep=0pt, minimum width=4pt]{};
  \node (8) at (6,-2) [circle, draw, inner sep=0pt, minimum width=4pt]{};

  \draw[-] (1) to (2);
  \draw[-] (1) to (3);
  \draw[-] (2) to (3);
  \draw[-] (3) to (4);
  \draw[-] (3) to (5);
  \draw[-] (3) to (6);
  \draw[-] (4) to (5);
  \draw[-] (4) to (6);
  \draw[-] (5) to (6);
  \draw[-] (6) to (7);
  \draw[-] (6) to (8);
  \draw[-] (7) to (8);

\end{tikzpicture}
\end{figure}
If we remove the vertex $v$, then the diagram is still connected. We choose $v$ to be the distinguished vertex. The resulting type $B_n$ Carter diagram is the following one. 
\begin{figure}[H]
\centering
\begin{tikzpicture}[scale=0.75, thick,>=latex, double equal sign distance]
   \tikzset{triple/.style={double,postaction={draw,-}}}

  \node (1) at (2,0) [circle, draw, inner sep=0pt, minimum width=4pt]{};
  \node (3) at (3,-1) [circle, draw, inner sep=0pt, minimum width=4pt]{};
  \node (2) at (2,-2) [circle, draw, inner sep=0pt, minimum width=4pt]{};
  \node (4) at (4,0) [circle, draw, inner sep=0pt, minimum width=4pt]{};
  \node (4b) at (4,0.35) []{$v$};
  \node (5) at (4,-2) [circle, draw, inner sep=0pt, minimum width=4pt]{};
  \node (6) at (5,-1) [circle, draw, inner sep=0pt, minimum width=4pt]{};
  \node (7) at (6,0) [circle, draw, inner sep=0pt, minimum width=4pt]{};
  \node (8) at (6,-2) [circle, draw, inner sep=0pt, minimum width=4pt]{};

  \draw[-] (1) to (2);
  \draw[-] (1) to (3);
  \draw[-] (2) to (3);
  \draw[double] (3) to (4);
  \draw[-] (3) to (5);
  \draw[-] (3) to (6);
  \draw[double] (4) to (5);
  \draw[double] (4) to (6);
  \draw[-] (5) to (6);
  \draw[-] (6) to (7);
  \draw[-] (6) to (8);
  \draw[-] (7) to (8);

\end{tikzpicture}
\end{figure}
\end{example}

\medskip
\subsubsection{{\bf Carter diagrams of type $D_n$}} \label{sec:CarterDiagD}
Let $W_{\Phi}$ be a Weyl group of type $D_n$. It is well known (see for instance \cite[Chapter 8.2]{BB05}) that $W_{\Phi}$ can be realized as the group of even signed permutations. We denote this group by $S_n^D$. For our purposes it will be convenient to see $S_n^D$ as the subgroup of $S_n^B$ generated by the set 
\begin{align}
T_n^D := \{ (i,j)(-i,-j) \mid 1 \leq i < |j| \leq n \}.
\end{align}
The set $T_n^D$ can also be identified with the set of reflections for $S_n^D$. We denote by $\varphi_D$ the restriction of the map $\varphi_B$ given in (\ref{equ:IsoB}) to the subgroup $S_n^D \leq S_n^B$. Further we put $\varphi:= \pr_2 \circ \varphi_D$, where
$$
\pr_2: \ZZ_2^n \rtimes_{\theta} \Sym([n]) \rightarrow \Sym([n]), (d,\pi) \mapsto \pi,
$$
and 
$$
\Xi_{D_n} := \{ (t_1, \ldots , t_n) \mid t_i \in T_n^D, ~\langle t_1, \ldots , t_n \rangle =S_n^D \}.
$$
Note that for $i,j \in \NN$ with $i < j$ we have $\varphi((i,j)(-i,-j))= \varphi((i,-j)(-i,j))=(i,j)$. Therefore and since $\varphi$ is surjective, $\varphi$ induces a map
$$
\overline{\varphi}: \Xi_{D_n} \rightarrow \Xi^{n-1,n}, ~(t_1, \ldots , t_n) \mapsto (\varphi(t_1), \ldots , \varphi(t_n)).
$$
This map is well defined. Since the tuple $(t_1, \ldots , t_n)$ generates the group $S_n^D$ and since $\varphi$ is surjective, the tuple $(\varphi(t_1), \ldots , \varphi(t_n))$ generates the group $\Sym([n])$. Therefore $\varphi(t_1) \cdots \varphi(t_n)$ is quasi-Coxeter, hence an $n$-cycle (see \cite[Lemma 6.3]{BGRW}).

The aim of this section is to show the following. 
\begin{theorem}[{Carter diagrams of type $D_n$}] \label{thm:CarterTypeD}
All Carter diagrams of type $D_n$ are given by the set $\mathcal{A}^{n-1,n}$. 

In particular, Theorem \ref{thm:Kluitmann} provides a procedure to construct all Carter diagrams of type $D_n$. 
\end{theorem}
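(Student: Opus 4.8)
The plan is to study the map $\overline\varphi\colon \Xi_{D_n}\to\Xi^{n-1,n}$ introduced just before the statement and to establish two facts: that it is surjective, and that it carries Carter diagrams to Kluitmann diagrams. Once both hold the theorem follows formally. Indeed, by Remark \ref{rem:MinCarter} the Carter diagrams of type $D_n$ are exactly those associated to reduced reflection factorizations of quasi-Coxeter elements in $S_n^D$, i.e. to the tuples in $\Xi_{D_n}$, while $\mathcal{A}^{n-1,n}$ is by definition the set of Kluitmann diagrams of the tuples in $\Xi^{n-1,n}$. A diagram-preserving surjection therefore forces the two sets of diagrams to coincide, and the ``in particular'' clause is then immediate from Theorem \ref{thm:Kluitmann}.

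The first step is a commutation lemma. Working in $\ZZ_2^n \rtimes_\theta \Sym([n])$ through $\varphi_B$, every reflection $t\in T_n^D$ has the form $(d,\tau)$ with $\tau=(a,b)$ a transposition and $d\in\{0,\,e_a+e_b\}$ supported on $\{a,b\}$. I would compute directly that two such elements $(d,\tau)$ and $(d',\tau')$ commute in $S_n^D$ if and only if $\tau$ and $\tau'$ commute in $\Sym([n])$: since $\varphi=\pr_2\circ\varphi_D$ is a homomorphism, non-commuting transpositions force non-commuting reflections, and when $\tau,\tau'$ are equal or disjoint one checks $\theta_\tau(d')=d'$ and $\theta_{\tau'}(d)=d$, so the two group elements commute as well. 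As $D_n$ is simply-laced, all weights in a type $D_n$ Carter diagram equal $1$, so the Carter diagram of $(t_1,\dots,t_n)\in\Xi_{D_n}$ is a simple graph recording non-commutativity; the lemma then says exactly that $t_i\mapsto\varphi(t_i)$ is an isomorphism onto the Kluitmann diagram of $\overline\varphi(t_1,\dots,t_n)$. This also matches the bookkeeping of repeated transpositions, since a pair $s_{e_a-e_b},\,s_{e_a+e_b}$ lying over one transposition commutes and has identical neighbourhoods, which is precisely a ``duplicated'' vertex in the sense of Theorem \ref{thm:Kluitmann}. In particular every type $D_n$ Carter diagram lies in $\mathcal{A}^{n-1,n}$.

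The second step is surjectivity of $\overline\varphi$, which I expect to be the real obstacle. Given $(\tau_1,\dots,\tau_n)\in\Xi^{n-1,n}$, the $\tau_i$ generate $\Sym([n])$, so the associated edge-multigraph on $[n]$ is connected with $n$ edges and hence contains a unique independent cycle. I would pick an index $i_0$ whose edge lies on that cycle, so that $\{\tau_i : i\neq i_0\}$ is a spanning tree and still generates $\Sym([n])$. Now lift: put $t_i:=s_{e_{a_i}-e_{b_i}}=(0,\tau_i)$ for $i\neq i_0$ and $t_{i_0}:=s_{e_{a_{i_0}}+e_{b_{i_0}}}=(e_{a_{i_0}}+e_{b_{i_0}},\tau_{i_0})$. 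Then $\varphi(t_i)=\tau_i$ for all $i$, so $\overline\varphi(t_1,\dots,t_n)=(\tau_1,\dots,\tau_n)$, and it remains to check $\langle t_1,\dots,t_n\rangle=S_n^D$. The tree lifts generate $U:=\{(0,\pi) : \pi\in\Sym([n])\}$, and multiplying $t_{i_0}$ by $(0,\tau_{i_0})\in U$ produces the pure sign change $(e_{a_{i_0}}+e_{b_{i_0}},\idop)$; conjugating it by $U$ yields $(e_c+e_d,\idop)$ for all $c\neq d$, and these generate the full even-sign-change submodule $\{(v,\idop) : \sum_i v_i=0\}$. Hence $\langle t_1,\dots,t_n\rangle$ contains this submodule together with $U$, which is all of $S_n^D$, so $(t_1,\dots,t_n)\in\Xi_{D_n}$.

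Combining the two steps, every element of $\mathcal{A}^{n-1,n}$ is the Kluitmann diagram of some $\overline\varphi(t_1,\dots,t_n)$ with $(t_1,\dots,t_n)\in\Xi_{D_n}$, hence a type $D_n$ Carter diagram, and conversely; this gives the claimed equality. The delicate points I would watch are that a type $D_n$ Carter diagram genuinely has $n$ vertices and is a simple graph, so that it matches an $n$-entry Kluitmann diagram under the duplicated-vertex convention, and that choosing the cycle edge $i_0$ as the $e+e$-lift is exactly what breaks the group out of the type $A_{n-1}$ subgroup $U\cong\Sym([n])$ into all of $S_n^D$; lifting $i_0$ to an $e-e$ root, or choosing $i_0$ off the cycle, would only generate $U$.
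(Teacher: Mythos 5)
Your proposal is correct, and its skeleton coincides with the paper's: the paper likewise reduces to tuples in $\Xi_{D_n}$, proves that $\overline{\varphi}$ preserves diagrams (Proposition \ref{prop:ProofCarterD1}, by essentially the same support/commutation computation you sketch), and then proves surjectivity of $\overline{\varphi}$ (Proposition \ref{prop:ProofCarterD2}). The genuine difference lies in the surjectivity step. The paper first shows that $\overline{\varphi}$ is equivariant for the Hurwitz action (Lemma \ref{lem:HurwitzEquiD}) and invokes \cite[Corollary 1.4]{LR16} to normalize a given tuple $(\tau_1,\dots,\tau_n)\in\Xi^{n-1,n}$ within its Hurwitz orbit so that $\tau_{n-1}=\tau_n$ and $\tau_1,\dots,\tau_{n-1}$ already generate $\Sym([n])$; it then lifts the duplicated pair to the two distinct reflections $(k,\ell)(-k,-\ell)$ and $(k,-\ell)(-k,\ell)$ over the same transposition and verifies generation by a computation in $\ZZ_2^{n-1}\rtimes_{\theta}\Sym([n])$ very close to yours. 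You avoid the Hurwitz machinery altogether: connectedness plus the count of $n$ edges on $n$ vertices gives a unique cycle in the multigraph of the $\tau_i$, removing one of its edges leaves a spanning tree, and placing the single $e_a+e_b$-lift exactly on that cycle edge is what produces the even sign changes. Your route is more elementary and self-contained (no equivariance lemma, no appeal to \cite{LR16}), and your closing remark correctly isolates why the choice of $i_0$ matters: a bridge edge, or an all-$(e_a-e_b)$ lift, generates only a proper subgroup. What the paper's route buys is mainly thematic economy, since the Hurwitz action is the central tool of the whole paper and the normalization makes the duplicated-vertex structure of $\mathcal{A}^{n-1,n}$ visible directly in the lifted tuple. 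One small point that both you and the paper leave implicit, and which you may wish to record: a tuple in $\Xi_{D_n}$ automatically consists of reflections with linearly independent roots (its $n$ roots must span $\RR^n$ because $S_n^D$ has trivial fixed space), which is what makes every such tuple a reduced reflection factorization of a quasi-Coxeter element, so that its diagram really is a Carter diagram of type $D_n$ in the sense of Definition \ref{def:CarterDiagram}.
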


The strategy of the proof is the following: Each Carter diagram of type $D_n$ is the Carter diagram of a reduced reflection factorization in $\Xi_{D_n}$. We first show that Carter diagrams are preserved under the map $\overline{\varphi}$. We then complete the proof of Theorem \ref{thm:CarterTypeD} by showing that $\overline{\varphi}$ is surjective. We state these observations separately.

\begin{Proposition} \label{prop:ProofCarterD1}
Carter diagrams are preserved under the map $\overline{\varphi}$, that is, if $(t_1, \ldots , t_n) \in \Xi_{D_n}$ then the Carter diagram associated to $(t_1, \ldots , t_n)$ is the same as the Kluitmann diagram associated to $(\varphi(t_1), \ldots , \varphi(t_n)) \in \Xi^{n-1,n}$. 
\end{Proposition}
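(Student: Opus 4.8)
The plan is to exhibit a common labeling of the vertices of the two diagrams and to show that their edge sets agree under it. Since $(t_1,\dots,t_n)\in\Xi_{D_n}$ generates the rank $n$ group $S_n^D$ by $n$ reflections, the associated roots are linearly independent (Carter's Lemma \ref{thm:CartersLemma}), so the $t_i$ are pairwise distinct and the Carter diagram has exactly $n$ vertices, which I label by $t_1,\dots,t_n$; the Kluitmann diagram I label by the positions $1,\dots,n$ as well (duplicated transpositions giving genuinely distinct vertices, as produced by step (b) of Theorem \ref{thm:Kluitmann}), identifying vertex $i$ with $t_i$ via $\varphi$. Because $\Phi$ is of type $D_n$ and hence simply-laced, any two linearly independent roots span an angle in $\{\pi/2,\pi/3,2\pi/3\}$, so the weight $\frac{2(\beta_i\mid\beta_j)}{(\beta_i\mid\beta_i)}\cdot\frac{2(\beta_j\mid\beta_i)}{(\beta_j\mid\beta_j)}$ is either $0$ or $1$. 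Thus the Carter diagram carries only single edges, and by Remark \ref{rem:MinCarter} there is an edge between $t_i$ and $t_j$ exactly when $t_i$ and $t_j$ do not commute. As the Kluitmann diagram is a simple graph with an edge between $\varphi(t_i)$ and $\varphi(t_j)$ exactly when these do not commute, it suffices to prove: for $i\neq j$, the reflections $t_i,t_j$ commute in $S_n^D$ if and only if $\varphi(t_i),\varphi(t_j)$ commute in $\Sym([n])$.

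First I would record the shape of the data. Writing $t=s_\beta$, $t'=s_{\beta'}$ with $\beta=\pm e_a\pm e_b$, $\beta'=\pm e_c\pm e_d$ (where $a<b$ and $c<d$), the description of $\varphi$ in \eqref{equ:IsoB} gives $\varphi(t)=(a,b)$ and $\varphi(t')=(c,d)$, so $\varphi$ remembers only the unordered index pair of a root. Two distinct transpositions commute precisely when they are disjoint, and equal transpositions commute trivially; hence $\varphi(t)$ and $\varphi(t')$ commute if and only if $\{a,b\}$ and $\{c,d\}$ are either equal or disjoint. On the other side, two distinct reflections commute if and only if their roots are orthogonal, so I would reduce the statement to the purely Euclidean assertion that $\beta\perp\beta'$ if and only if $\{a,b\}$ and $\{c,d\}$ are equal or disjoint.

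The latter I would settle by the evident case distinction on $|\{a,b\}\cap\{c,d\}|$. If the index pairs are disjoint then $(\beta\mid\beta')=0$; if they share exactly one index then $(\beta\mid\beta')=\pm1\neq0$, since only the common coordinate contributes; and if $\{a,b\}=\{c,d\}$ with $t\neq t'$, then necessarily $\{\beta,\beta'\}=\{e_a+e_b,\,e_a-e_b\}$ up to sign, whence $(\beta\mid\beta')=1-1=0$. This proves the equivalence and hence the Proposition.

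The step I expect to require the most care is not the computation but the bookkeeping of vertices in the non-injective situation: a single transposition $(a,b)$ can be the image of the two distinct, orthogonal reflections $s_{e_a-e_b}$ and $s_{e_a+e_b}$. I would stress that this is exactly the ``duplication'' of step (b) in Theorem \ref{thm:Kluitmann} — the two reflections give non-adjacent vertices with identical neighbourhoods in both diagrams — so the labeling $t_i\mapsto\varphi(t_i)$ remains a genuine isomorphism of graphs rather than collapsing two vertices into one. Once this is in place, the equivalence above shows the Carter diagram and the Kluitmann diagram coincide edge for edge.
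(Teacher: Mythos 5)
Your proof is correct and follows essentially the same route as the paper: both reduce the statement to the fact that an edge means non-commutation (Carter's Lemma \ref{thm:CartersLemma}), and that commutation on either side is governed by whether the index pairs $\{a,b\}$ and $\{c,d\}$ are equal, disjoint, or overlap in exactly one element --- the paper phrases this via supports of signed permutations, you via orthogonality of roots, which amounts to the same case analysis. Your explicit treatment of the duplicated-vertex bookkeeping (two orthogonal reflections $s_{e_a-e_b}$, $s_{e_a+e_b}$ mapping to the single transposition $(a,b)$) is a point the paper leaves implicit, but it does not change the underlying argument.
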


\begin{Proposition} \label{prop:ProofCarterD2}
The map $\overline{\varphi}$ is surjective. 
\end{Proposition}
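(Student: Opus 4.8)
The plan is to reduce the surjectivity of $\overline{\varphi}$ to a lifting problem inside the semidirect product $\ZZ_2^n \rtimes_{\theta} \Sym([n])$ and then to solve it by a single, carefully placed sign flip. Recall from (\ref{equ:IsoB}) that $\varphi_D$ identifies $S_n^D$ with the subgroup $E \rtimes_{\theta} \Sym([n])$, where $E := \{ d \in \ZZ_2^n \mid \sum_{i} d_i = 0 \}$ is the even-weight subgroup (of order $2^{n-1}$), since every generator in $T_n^D$ is sent to $(0,(i,j))$ or $(e_i+e_j,(i,j))$. Under this identification $\varphi = \pr_2 \circ \varphi_D$ becomes the projection onto $\Sym([n])$, with kernel $E \rtimes \{\idop\} \cong E$. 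A transposition $(a,b)$ with $a<b$ has exactly two preimages in $T_n^D$, namely $(a,b)(-a,-b)$ and $(a,-b)(-a,b)$, mapping to $(0,(a,b))$ and $(e_a+e_b,(a,b))$ respectively. Thus lifting a tuple $(\tau_1,\dots,\tau_n) \in \Xi^{n-1,n}$ amounts to choosing, for each $i$, one of these two ``signs'', and the task is to make the choice so that the resulting reflections generate all of $S_n^D$.

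First I would record the combinatorial shape of an element of $\Xi^{n-1,n}$: the transpositions $\tau_1,\dots,\tau_n$, viewed as edges (with multiplicity) on the vertex set $[n]$, form a connected multigraph, because they generate $\Sym([n])$. Having $n$ vertices and $n$ edges, it contains a spanning tree together with exactly one further edge $\tau_{i_0}$. The construction is then: lift every $\tau_i$ with $i \neq i_0$ to its positive reflection (the one mapping to $(0,\tau_i)$), and lift the remaining $\tau_{i_0}=(a,b)$ to its negative reflection $(e_a+e_b,(a,b))$. Since the first coordinates multiply trivially among positive lifts, and the tree edges alone already generate $\Sym([n])$, the positive lifts generate the complement $\{0\} \rtimes \Sym([n]) \cong \Sym([n])$. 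In particular $(0,(a,b))$ lies in the generated group $H$, and multiplying it with the negative lift gives $(e_a+e_b,(a,b)) \cdot (0,(a,b)) = (e_a+e_b,\idop)$, a nontrivial element of $H \cap \ker\varphi$.

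To finish, set $K := H \cap \ker\varphi \leq E$. As $\ker\varphi$ is a normal subgroup of $S_n^D$ and $H$ surjects onto $\Sym([n])$, the group $K$ is normal in $H$ and the conjugation action of $H$ on $K$ is precisely the coordinate-permutation action of $\Sym([n])$ on $E$; hence $K$ is $\Sym([n])$-stable. The $\Sym([n])$-orbit of $e_a+e_b$ is $\{ e_i+e_j \mid 1 \le i<j\le n\}$, and these vectors span $E$ (for instance $e_1+e_2,\dots,e_{n-1}+e_n$ are linearly independent). Therefore $K=E$, so $|H| = |E|\cdot n! = 2^{n-1} n! = |S_n^D|$ and $H = S_n^D$. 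This shows $(t_1,\dots,t_n) \in \Xi_{D_n}$, and since $\varphi$ ignores the sign we have $\overline{\varphi}(t_1,\dots,t_n)=(\tau_1,\dots,\tau_n)$, giving surjectivity.

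I expect the only genuine subtlety, and hence the step to state with care, to be the final one: that a single nonzero even vector generates the whole even-weight subgroup $E$ as a $\Sym([n])$-module. This is what guarantees that one sign flip suffices, rather than merely enlarging $H$ by the ``all-ones'' line (the other proper $\Sym([n])$-submodule of $E$ when $n$ is even). The explicit orbit computation settles this directly, so there is no real obstacle; the remaining ingredients, namely that connectivity yields a spanning-tree-plus-one-edge decomposition and that the positive lifts form a complement isomorphic to $\Sym([n])$, are routine once the semidirect-product picture is set up.
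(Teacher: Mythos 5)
Your proof is correct, and it takes a genuinely different route from the paper's. The paper first normalizes the tuple by the Hurwitz action: it establishes that $\overline{\varphi}$ is Hurwitz-equivariant (Lemma \ref{lem:HurwitzEquiD}) and invokes \cite[Corollary 1.4]{LR16} to reduce to the case $\tau_{n-1}=\tau_n$ with $\langle \tau_1,\ldots,\tau_{n-1}\rangle=\Sym([n])$; it then lifts every transposition positively except the repeated one, which receives the sign flip, and finishes by an explicit computation showing that $(e_1+e_2,(1,2))$ together with the elements $(0,(i,i+1))$ generate $\ZZ_2^{n-1}\rtimes_{\theta}\Sym([n])$. You dispense with the Hurwitz reduction altogether: viewing $\tau_1,\ldots,\tau_n$ as the edge multiset of a connected multigraph on $[n]$ with $n$ edges, you place the sign flip on the unique edge outside a spanning tree, and you replace the paper's explicit generation computation by the observation that the $\Sym([n])$-orbit of $e_a+e_b$ spans the even-weight subgroup $E$, plus an order count. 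Note that the paper's normalized configuration is exactly the special case of your setup in which the unique cycle of the multigraph is a doubled edge, so the two constructions coincide there. What your route buys is self-containedness: neither Lemma \ref{lem:HurwitzEquiD} nor the Lewis--Reiner transitivity result is needed, and all tuples are treated uniformly. What the paper's route buys is that generation needs to be verified only in one concrete normalized situation, and the equivariance lemma proved along the way has independent interest, as it says precisely that $\overline{\varphi}$ intertwines the two Hurwitz actions. The core mechanism --- a single sign flip produces a nontrivial element of the kernel, whose $\Sym([n])$-conjugates then generate all of $E$ --- is shared by both arguments; the difference lies in how the flipped transposition is located and how the final generation statement is certified.
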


\medskip
\begin{Definition}
For a reflection $t=(i,j)(-i,-j) \in T_n^B$ (resp. $t=(i,-i) \in T_n^B$) we define the \defn{support} of $t$ as 
$$
\supp(t) := \{ |i|, |j| \} ~(\text{resp. } \supp(t) := \{ |i| \}.).
$$
\end{Definition}

\medskip
\begin{proof}[{\bf Proof of Proposition \ref{prop:ProofCarterD1}}]
Let $(t_1, \ldots , t_n) \in \Xi_{D_n}$ and $i,j \in [n]$ with $i<j$. We have to show that there is an edge between $t_i$ and $t_j$ in the Carter diagram associated to $(t_1, \ldots , t_n)$ if and only if there is an edge between $\varphi(t_i)$ and $\varphi(t_j)$ in the Kluitmann diagram associated to $(\varphi(t_1), \ldots , \varphi(t_n))$.

Let us first assume that there is an edge between $t_i$ and $t_j$. By Carter's Lemma \ref{thm:CartersLemma} this is equivalent to $t_i t_j \neq t_j t_i$. We conclude
\begin{align*}
t_i t_j \neq t_j t_i ~\text{and } t_i \neq t_j &~\Leftrightarrow ~ 
|\supp(t_i) \cap \supp(t_j)|=1 \\
{} &~\Leftrightarrow ~ |\supp(\varphi(t_i)) \cap \supp(\varphi(t_j))|=1\\
{} &~\Leftrightarrow ~ \varphi(t_i) \varphi(t_j) \neq \varphi(t_j) \varphi(t_i) ~\text{and } \varphi(t_i) \neq \varphi(t_j),
\end{align*}
that is, $\varphi(t_i)$ and $\varphi(t_j)$ are connected by an edge in the Kluitmann diagram. 

Now consider the case that there is no edge between $t_i$ and $t_j$. This is equivalent to 
$$
|\supp(t_i) \cap \supp(t_j)| \in \{ 0,2 \}.
$$
By the definition of $\varphi$ we have 
$$
\supp(t_i) \cap \supp(t_j) = \supp(\varphi(t_i)) \cap \varphi(\supp(t_j)).
$$
Hence there is no edge between $t_i$ and $t_j$ if and only if there is no edge between $\varphi(t_i)$ and $\varphi(t_j)$.
\end{proof}

As a preparation for the proof of Proposition \ref{prop:ProofCarterD2} we show:
\begin{Lemma} \label{lem:HurwitzEquiD}
The map $\overline{\varphi}$ is equivariant with respect to the Hurwitz action.
\end{Lemma}

\begin{proof}
Let $\underline{t}:= (t_1, \ldots ,t_i, t_{i+1}, \ldots , t_n ) \in \Xi_{D_n}$ and $i \in [n-1]$. It is enough to show that 
$$
\sigma_i(\overline{\varphi}(\underline{t}))= \overline{\varphi}(\sigma_i(\underline{t})).
$$
Let $t_i= (k_1, \ell_1)(-k_1, -\ell_1)$ and $t_{i+1}= (k_2, \ell_2)(-k_2, - \ell_2)$ with $1 \leq k_j < |\ell_j| \leq n$ for $j \in \{1,2\}$. 

Let us first assume that $t_it_{i+1}= t_{i+1}t_i$. Then $\sigma_i(\underline{t})= (\ldots, t_{i+1}, t_i, \ldots )$, thus $\overline{\varphi}(\sigma_i(\underline{t})) = (\ldots, (k_2, |\ell_2|), (k_1, |\ell_1|), \ldots )$. On the other hand we have 
$$
\sigma_i(\overline{\varphi}(\underline{t}))= \sigma_i (\ldots, (k_1, |\ell_1|), (k_2, |\ell_2|), \ldots ) = (\ldots, (k_2, |\ell_2|)^{(k_1, |\ell_1|)}, (k_1, |\ell_1|), \ldots ).
$$
But $(k_2, |\ell_2|)^{(k_1, |\ell_1|)} = (k_2, |\ell_2|)$, since $|\supp(t_i)\cap \supp(t_{i+1})| \in \{ 0,2 \}$ (see also the proof of Proposition \ref{prop:ProofCarterD1}). Hence $\sigma_i(\overline{\varphi}(\underline{t}))= \overline{\varphi}(\sigma_i(\underline{t}))$.

Now assume that $t_i$ and $t_{i+1}$ do not commute. This implies $|\supp(t_i)\cap \supp(t_{i+1})|=1$. Let us assume that $\ell_1= k_2$ (the other cases can be treated analogously). Then 
\begin{align*}
\overline{\varphi}(\sigma_i(\underline{t})) & = 
\overline{\varphi}(\ldots, (k_2, \ell_2)(-k_2, -\ell_2)^{(k_1, \ell_1)(-k_1, -\ell_1)}, (k_1, \ell_1)(-k_1, -\ell_1), \ldots)\\
{} & = 
\overline{\varphi}(\ldots, (k_2, \ell_2)(-k_2, -\ell_2)^{(k_1, k_2)(-k_1, -k_2)}, (k_1, k_2)(-k_1, -k_2), \ldots)\\
{} & = 
\overline{\varphi}(\ldots, (k_1, \ell_2)(-k_1, -\ell_2), (k_1, k_2)(-k_1, -k_2), \ldots)\\
{} & = (\ldots, (k_1, |\ell_2|), (k_1, k_2), \ldots)
\end{align*}
and
\begin{align*}
\sigma_i(\overline{\varphi}(\underline{t})) & = 
\sigma_i (\ldots, (k_1, |\ell_1|), (k_2, |\ell_2|), \ldots)\\
{} & = 
\sigma_i (\ldots, (k_1, k_2), (k_2, |\ell_2|), \ldots)\\
{} & = 
(\ldots, (k_2, |\ell_2|)^{(k_1, k_2)}, (k_1, k_2), \ldots)\\
{} & = 
(\ldots, (k_1, |\ell_2|), (k_1, k_2), \ldots)
\end{align*}

\end{proof}

\medskip
\begin{proof}[{\bf Proof of Proposition \ref{prop:ProofCarterD2}}]
Let $(\tau_1, \ldots , \tau_n) \in \Xi^{n-1, n }$ be arbitrary. By Lemma \ref{lem:HurwitzEquiD} and by \cite[Corollary 1.4]{LR16}, we can apply the Hurwitz action and assume that 
\begin{align} \label{equ:EquivD1}
\tau_{n-1}= \tau_n ~\text{and } \langle \tau_1, \ldots , \tau_{n-1} \rangle = \Sym([n]).
\end{align}
Let $\tau_i = (k_i, \ell_i)$ with $k_i , \ell_i \in [n]$ and $k_i < \ell_i$ for all $i \in [n-1]$. Put
\begin{align*}
\overline{\tau}_i & := (k_i, \ell_i)(-k_i, -\ell_i) ~\text{for }1 \leq i \leq n-1,\\
\overline{\tau}_n & := (k_{n-1}, -\ell_{n-1})(-k_{n-1}, \ell_{n-1}).
\end{align*}
Since $\varphi(\overline{\tau}_i) = \tau_i$ for all $i \in [n]$, it remains to show that $(\overline{\tau}_1, \ldots,  \overline{\tau}_{n-1}, \overline{\tau}_n) \in \Xi_{D_n}$, that is 
$$
W := \langle \overline{\tau}_1, \ldots,  \overline{\tau}_{n-1}, \overline{\tau}_n \rangle  = S_n^D.
$$
Equivalentely, we have to show that $\varphi_D(W) \cong \ZZ_2^{n-1} \rtimes_{\theta} \Sym([n])$, where we identify $\ZZ_2^{n-1}$ with the subgroup $\{ (d_1, \ldots d_n) \in \ZZ_2^n \mid d_1 + \ldots +d_n ~\text{is even} \} \leq \ZZ_2^{n}$. We have 
\begin{align*}
\varphi_D(\overline{\tau}_i) & = (0, \tau_i) ~\text{for }1 \leq i \leq n-1,\\
\varphi_D(\overline{\tau}_n) & = (e_{k_{n-1}}+ e_{\ell_{n-1}}, \tau_{n-1}).
\end{align*}
Choose $\pi \in \Sym([n])$ with $\pi(k_{n-1})=1$ and $\pi(\ell_{n-1})=2$. By (\ref{equ:EquivD1}) there exist $i_1, \ldots ,i_m \in [n-1]$ with $\pi = \tau_{i_1} \cdots \tau_{i_m}$. 
\begin{align*}
((0, \tau_{i_1}), \ldots , (0, \tau_{i_m})) \cdot (e_{k_{n-1}}+ e_{\ell_{n-1}}, \tau_{n-1}) & = (0, \pi)\cdot (e_{k_{n-1}}+ e_{\ell_{n-1}}, \tau_{n-1})\\
{} & = (e_{\pi(k_{n-1})}+ e_{\pi(\ell_{n-1})}, \pi \tau_{n-1})\\
{} & = (e_1+e_2, \pi \tau_{n-1}) \in \varphi_D(W).
\end{align*}
By (\ref{equ:EquivD1}) we have $(0, (\pi \tau_{n-1})^{-1}(1,2)) \in \varphi_D(W)$. 
Therefore we obtain
\begin{align*}
(e_1+e_2, \pi \tau_{n-1}) \cdot (0, (\pi \tau_{n-1})^{-1}(1,2)) & = ((e_1+e_2)+ \theta(\pi \tau_{n-1})(0), (\pi \tau_{n-1})(\pi \tau_{n-1})^{-1}(1,2))\\
{} & = (e_1 + e_2, (1,2)) \in \varphi_D(W).
\end{align*}
By (\ref{equ:EquivD1}) we have $(0, (i,i+1)) \in \varphi_D(W)$ for all $i \in [n-1]$. These elements together with the element $(e_1 + e_2, (1,2)) \in \varphi_D(W)$ are a generating set of $\ZZ_2^{n-1} \rtimes_{\theta} \Sym([n])$.
\end{proof}


\medskip
\subsubsection{{\bf Carter diagrams of exceptional types}}
We list all non-admissible Carter diagrams of the exceptional types $E_6$ and $F_4$ in Figure \ref{fig:CarterE6} and Figure \ref{fig:CarterF4}, respectively. The admissible ones can be found in \cite{Car72}. The only Carter diagram of type $G_2$ is the corresponding Dynkin diagram. For type $E_7$ there are up to isomorphism $233$ Carter diagrams, some examples are shown in Figure \ref{fig:CarterE7}. For type $E_8$ there are up to isomorphism $1242$ Carter diagrams, one example is shown in Figure \ref{fig:CarterE8}. The complete lists for the types $E_7$ and $E_8$ can be found on the author's webpage:
\begin{center}
\url{http://www-user.rhrk.uni-kl.de/~wegener/}
\end{center}

\begin{figure}[H]
\centering
\begin{tikzpicture}[scale=0.5, thick,>=latex]

  \node (a1) at (0,0) [circle, draw, inner sep=0pt, minimum width=4pt]{};
  \node (a2) at (2,0) [circle, draw, inner sep=0pt, minimum width=4pt]{};
  \node (a3) at (4,0) [circle, draw, inner sep=0pt, minimum width=4pt]{};
  \node (a4) at (0,-2) [circle, draw, inner sep=0pt, minimum width=4pt]{};
  \node (a5) at (2,-2) [circle, draw, inner sep=0pt, minimum width=4pt]{};
  \node (a6) at (4,-2) [circle, draw, inner sep=0pt, minimum width=4pt]{};

  \draw[-] (a1) to (a2);
  \draw[-] (a2) to (a3);
  \draw[-] (a1) to (a4);
  \draw[-] (a2) to (a5);
  \draw[-] (a2) to (a4);
  \draw[-] (a4) to (a5);
  \draw[-] (a5) to (a6);

  \node (b1) at (6,0) [circle, draw, inner sep=0pt, minimum width=4pt]{};
  \node (b2) at (8,0) [circle, draw, inner sep=0pt, minimum width=4pt]{};
  \node (b3) at (10,0) [circle, draw, inner sep=0pt, minimum width=4pt]{};
  \node (b4) at (6,-2) [circle, draw, inner sep=0pt, minimum width=4pt]{};
  \node (b5) at (8,-2) [circle, draw, inner sep=0pt, minimum width=4pt]{};
  \node (b6) at (10,-2) [circle, draw, inner sep=0pt, minimum width=4pt]{};

  \draw[-] (b1) to (b2);
  \draw[-] (b2) to (b3);
  \draw[-] (b1) to (b4);
  \draw[-] (b2) to (b4);
  \draw[-] (b2) to (b5);
  \draw[-] (b3) to (b6);

  \node (c1) at (12,0) [circle, draw, inner sep=0pt, minimum width=4pt]{};
  \node (c2) at (14,0) [circle, draw, inner sep=0pt, minimum width=4pt]{};
  \node (c3) at (16,0) [circle, draw, inner sep=0pt, minimum width=4pt]{};
  \node (c4) at (12,-2) [circle, draw, inner sep=0pt, minimum width=4pt]{};
  \node (c5) at (14,-2) [circle, draw, inner sep=0pt, minimum width=4pt]{};
  \node (c6) at (16,-2) [circle, draw, inner sep=0pt, minimum width=4pt]{};

  \draw[-] (c1) to (c2);
  \draw[-] (c2) to (c3);
  \draw[-] (c1) to (c4);
  \draw[-] (c2) to (c5);
  \draw[-] (c2) to (c6);
  \draw[-] (c4) to (c5);
  \draw[-] (c5) to (c6);

  \node (d1) at (18,0) [circle, draw, inner sep=0pt, minimum width=4pt]{};
  \node (d2) at (20,0) [circle, draw, inner sep=0pt, minimum width=4pt]{};
  \node (d3) at (22,0) [circle, draw, inner sep=0pt, minimum width=4pt]{};
  \node (d4) at (24,-2) [circle, draw, inner sep=0pt, minimum width=4pt]{};
  \node (d5) at (20,-2) [circle, draw, inner sep=0pt, minimum width=4pt]{};
  \node (d6) at (22,-2) [circle, draw, inner sep=0pt, minimum width=4pt]{};

  \draw[-] (d1) to (d2);
  \draw[-] (d2) to (d3);
  \draw[-] (d2) to (d5);
  \draw[-] (d3) to (d5);
  \draw[-] (d3) to (d6);
  \draw[-] (d5) to (d6);
  \draw[-] (d6) to (d4);

  \node (e1) at (0,-4) [circle, draw, inner sep=0pt, minimum width=4pt]{};
  \node (e2) at (2,-4) [circle, draw, inner sep=0pt, minimum width=4pt]{};
  \node (e3) at (4,-4) [circle, draw, inner sep=0pt, minimum width=4pt]{};
  \node (e4) at (0,-6) [circle, draw, inner sep=0pt, minimum width=4pt]{};
  \node (e5) at (2,-6) [circle, draw, inner sep=0pt, minimum width=4pt]{};
  \node (e6) at (4,-6) [circle, draw, inner sep=0pt, minimum width=4pt]{};

  \draw[-] (e1) to (e2);
  \draw[-] (e2) to (e3);
  \draw[-] (e1) to (e4);
  \draw[-] (e2) to (e4);
  \draw[-] (e2) to (e5);
  \draw[-] (e2) to (e6);
  \draw[-] (e5) to (e6);

  \node (f1) at (6,-4) [circle, draw, inner sep=0pt, minimum width=4pt]{};
  \node (f2) at (8,-4) [circle, draw, inner sep=0pt, minimum width=4pt]{};
  \node (f3) at (10,-4) [circle, draw, inner sep=0pt, minimum width=4pt]{};
  \node (f4) at (6,-6) [circle, draw, inner sep=0pt, minimum width=4pt]{};
  \node (f5) at (8,-6) [circle, draw, inner sep=0pt, minimum width=4pt]{};
  \node (f6) at (10,-6) [circle, draw, inner sep=0pt, minimum width=4pt]{};

  \draw[-] (f1) to (f2);
  \draw[-] (f2) to (f3);
  \draw[-] (f1) to (f4);
  \draw[-] (f1) to (f5);
  \draw[-] (f2) to (f4);
  \draw[-] (f2) to (f5);
  \draw[-] (f3) to (f5);
  \draw[-] (f3) to (f6);
  \draw[-] (f4) to (f5);

  \node (g1) at (12,-4) [circle, draw, inner sep=0pt, minimum width=4pt]{};
  \node (g2) at (14,-4) [circle, draw, inner sep=0pt, minimum width=4pt]{};
  \node (g3) at (16,-4) [circle, draw, inner sep=0pt, minimum width=4pt]{};
  \node (g4) at (12,-6) [circle, draw, inner sep=0pt, minimum width=4pt]{};
  \node (g5) at (14,-6) [circle, draw, inner sep=0pt, minimum width=4pt]{};
  \node (g6) at (16,-6) [circle, draw, inner sep=0pt, minimum width=4pt]{};

  \draw[-] (g1) to (g2);
  \draw[-] (g2) to (g3);
  \draw[-] (g1) to (g4);
  \draw[-] (g2) to (g4);
  \draw[-] (g2) to (g5);
  \draw[-] (g3) to (g5);
  \draw[-] (g4) to (g5);
  \draw[-] (g5) to (g6);

  \node (h1) at (18,-4) [circle, draw, inner sep=0pt, minimum width=4pt]{};
  \node (h2) at (20,-4) [circle, draw, inner sep=0pt, minimum width=4pt]{};
  \node (h3) at (22,-4) [circle, draw, inner sep=0pt, minimum width=4pt]{};
  \node (h4) at (18,-6) [circle, draw, inner sep=0pt, minimum width=4pt]{};
  \node (h5) at (20,-6) [circle, draw, inner sep=0pt, minimum width=4pt]{};
  \node (h6) at (22,-6) [circle, draw, inner sep=0pt, minimum width=4pt]{};

  \draw[-] (h1) to (h2);
  \draw[-] (h2) to (h3);
  \draw[-] (h1) to (h4);
  \draw[-] (h1) to (h5);
  \draw[-] (h2) to (h4);
  \draw[-] (h2) to (h5);
  \draw[-] (h3) to (h5);
  \draw[-] (h5) to (h6);
  \draw[-] (h4) to (h5);

  \node (i1) at (0,-8) [circle, draw, inner sep=0pt, minimum width=4pt]{};
  \node (i2) at (2,-8) [circle, draw, inner sep=0pt, minimum width=4pt]{};
  \node (i3) at (4,-8) [circle, draw, inner sep=0pt, minimum width=4pt]{};
  \node (i4) at (0,-10) [circle, draw, inner sep=0pt, minimum width=4pt]{};
  \node (i5) at (2,-10) [circle, draw, inner sep=0pt, minimum width=4pt]{};
  \node (i6) at (4,-10) [circle, draw, inner sep=0pt, minimum width=4pt]{};

  \draw[-] (i1) to (i2);
  \draw[-] (i2) to (i3);
  \draw[-] (i1) to (i4);
  \draw[-] (i3) to (i5);
  \draw[-] (i3) to (i6);
  \draw[-] (i4) to (i5);

  \node (j1) at (6,-8) [circle, draw, inner sep=0pt, minimum width=4pt]{};
  \node (j2) at (8,-8) [circle, draw, inner sep=0pt, minimum width=4pt]{};
  \node (j3) at (10,-8) [circle, draw, inner sep=0pt, minimum width=4pt]{};
  \node (j4) at (6,-10) [circle, draw, inner sep=0pt, minimum width=4pt]{};
  \node (j5) at (8,-10) [circle, draw, inner sep=0pt, minimum width=4pt]{};
  \node (j6) at (10,-10) [circle, draw, inner sep=0pt, minimum width=4pt]{};

  \draw[-] (j1) to (j2);
  \draw[-] (j2) to (j3);
  \draw[-] (j1) to (j6);
  \draw[-] (j2) to (j5);
  \draw[-] (j2) to (j6);
  \draw[-] (j3) to (j5);
  \draw[-] (j3) to (j6);
  \draw[-] (j4) to (j5);
  \draw[-] (j5) to (j6);

  \node (k1) at (12,-8) [circle, draw, inner sep=0pt, minimum width=4pt]{};
  \node (k2) at (14,-8) [circle, draw, inner sep=0pt, minimum width=4pt]{};
  \node (k3) at (16,-8) [circle, draw, inner sep=0pt, minimum width=4pt]{};
  \node (k4) at (12,-10) [circle, draw, inner sep=0pt, minimum width=4pt]{};
  \node (k5) at (14,-10) [circle, draw, inner sep=0pt, minimum width=4pt]{};
  \node (k6) at (16,-10) [circle, draw, inner sep=0pt, minimum width=4pt]{};

  \draw[-] (k1) to (k2);
  \draw[-] (k2) to (k3);
  \draw[-] (k1) to (k4);
  \draw[-] (k1) to (k5);
  \draw[-] (k2) to (k4);
  \draw[-] (k2) to (k6);
  \draw[-] (k3) to (k6);
  \draw[-] (k4) to (k5);
  \draw[-] (k5) to (k6);

  \node (l1) at (18,-8) [circle, draw, inner sep=0pt, minimum width=4pt]{};
  \node (l2) at (20,-8) [circle, draw, inner sep=0pt, minimum width=4pt]{};
  \node (l3) at (22,-8) [circle, draw, inner sep=0pt, minimum width=4pt]{};
  \node (l4) at (18,-10) [circle, draw, inner sep=0pt, minimum width=4pt]{};
  \node (l5) at (20,-10) [circle, draw, inner sep=0pt, minimum width=4pt]{};
  \node (l6) at (22,-10) [circle, draw, inner sep=0pt, minimum width=4pt]{};

  \draw[-] (l1) to (l2);
  \draw[-] (l2) to (l3);
  \draw[-] (l1) to (l4);
  \draw[-] (l2) to (l4);
  \draw[-] (l2) to (l5);
  \draw[-] (l2) to (l6);
  \draw[-] (l5) to (l6);
  \draw[-] (l4) to (l5);

  \node (m1) at (0,-12) [circle, draw, inner sep=0pt, minimum width=4pt]{};
  \node (m2) at (2,-12) [circle, draw, inner sep=0pt, minimum width=4pt]{};
  \node (m3) at (4,-12) [circle, draw, inner sep=0pt, minimum width=4pt]{};
  \node (m4) at (0,-14) [circle, draw, inner sep=0pt, minimum width=4pt]{};
  \node (m5) at (2,-14) [circle, draw, inner sep=0pt, minimum width=4pt]{};
  \node (m6) at (4,-14) [circle, draw, inner sep=0pt, minimum width=4pt]{};

  \draw[-] (m1) to (m2);
  \draw[-] (m2) to (m3);
  \draw[-] (m1) to (m4);
  \draw[-] (m2) to (m5);
  \draw[-] (m2) to (m6);
  \draw[-] (m3) to (m6);
  \draw[-] (m4) to (m5);
  \draw[-] (m5) to (m6);

  \node (n1) at (6,-12) [circle, draw, inner sep=0pt, minimum width=4pt]{};
  \node (n2) at (8,-12) [circle, draw, inner sep=0pt, minimum width=4pt]{};
  \node (n3) at (10,-12) [circle, draw, inner sep=0pt, minimum width=4pt]{};
  \node (n4) at (6,-14) [circle, draw, inner sep=0pt, minimum width=4pt]{};
  \node (n5) at (8,-14) [circle, draw, inner sep=0pt, minimum width=4pt]{};
  \node (n6) at (10,-14) [circle, draw, inner sep=0pt, minimum width=4pt]{};

  \draw[-] (n1) to (n2);
  \draw[-] (n2) to (n3);
  \draw[-] (n1) to (n4);
  \draw[-] (n2) to (n4);
  \draw[-] (n2) to (n5);
  \draw[-] (n2) to (n6);
  \draw[-] (n3) to (n6);
  \draw[-] (n4) to (n5);
  \draw[-] (n5) to (n6);

  \node (o1) at (12,-12) [circle, draw, inner sep=0pt, minimum width=4pt]{};
  \node (o2) at (14,-12) [circle, draw, inner sep=0pt, minimum width=4pt]{};
  \node (o3) at (16,-12) [circle, draw, inner sep=0pt, minimum width=4pt]{};
  \node (o4) at (12,-14) [circle, draw, inner sep=0pt, minimum width=4pt]{};
  \node (o5) at (14,-14) [circle, draw, inner sep=0pt, minimum width=4pt]{};
  \node (o6) at (16,-14) [circle, draw, inner sep=0pt, minimum width=4pt]{};

  \draw[-] (o1) to (o2);
  \draw[-] (o2) to (o3);
  \draw[-] (o1) to (o4);
  \draw[-] (o1) to (o6);
  \draw[-] (o2) to (o5);
  \draw[-] (o2) to (o6);
  \draw[-] (o3) to (o5);
  \draw[-] (o4) to (o5);
  \draw[-] (o5) to (o6);

  \node (p1) at (18,-12) [circle, draw, inner sep=0pt, minimum width=4pt]{};
  \node (p2) at (20,-12) [circle, draw, inner sep=0pt, minimum width=4pt]{};
  \node (p3) at (22,-12) [circle, draw, inner sep=0pt, minimum width=4pt]{};
  \node (p4) at (18,-14) [circle, draw, inner sep=0pt, minimum width=4pt]{};
  \node (p5) at (20,-14) [circle, draw, inner sep=0pt, minimum width=4pt]{};
  \node (p6) at (22,-14) [circle, draw, inner sep=0pt, minimum width=4pt]{};

  \draw[-] (p1) to (p2);
  \draw[-] (p2) to (p3);
  \draw[-] (p1) to (p4);
  \draw[-] (p1) to (p5);
  \draw[-] (p2) to (p4);
  \draw[-] (p2) to (p5);
  \draw[-] (p2) to (p6);
  \draw[-] (p3) to (p5);
  \draw[-] (p3) to (p6);
  \draw[-] (p5) to (p6);
  \draw[-] (p4) to (p5);

  \node (q1) at (0,-16) [circle, draw, inner sep=0pt, minimum width=4pt]{};
  \node (q2) at (2,-16) [circle, draw, inner sep=0pt, minimum width=4pt]{};
  \node (q3) at (4,-16) [circle, draw, inner sep=0pt, minimum width=4pt]{};
  \node (q4) at (0,-18) [circle, draw, inner sep=0pt, minimum width=4pt]{};
  \node (q5) at (2,-18) [circle, draw, inner sep=0pt, minimum width=4pt]{};
  \node (q6) at (4,-18) [circle, draw, inner sep=0pt, minimum width=4pt]{};

  \draw[-] (q1) to (q2);
  \draw[-] (q2) to (q3);
  \draw[-] (q1) to (q4);
  \draw[-] (q1) to (q5);
  \draw[-] (q2) to (q4);
  \draw[-] (q3) to (q6);
  \draw[-] (q4) to (q5);
  \draw[-] (q5) to (q6);

  \node (r1) at (6,-16) [circle, draw, inner sep=0pt, minimum width=4pt]{};
  \node (r2) at (8,-16) [circle, draw, inner sep=0pt, minimum width=4pt]{};
  \node (r3) at (10,-16) [circle, draw, inner sep=0pt, minimum width=4pt]{};
  \node (r4) at (6,-18) [circle, draw, inner sep=0pt, minimum width=4pt]{};
  \node (r5) at (8,-18) [circle, draw, inner sep=0pt, minimum width=4pt]{};
  \node (r6) at (10,-18) [circle, draw, inner sep=0pt, minimum width=4pt]{};

  \draw[-] (r1) to (r2);
  \draw[-] (r2) to (r3);
  \draw[-] (r1) to (r5);
  \draw[-] (r1) to (r6);
  \draw[-] (r2) to (r4);
  \draw[-] (r2) to (r5);
  \draw[-] (r2) to (r6);
  \draw[-] (r3) to (r4);
  \draw[-] (r3) to (r5);
  \draw[-] (r3) to (r6);
  \draw[-] (r5) to (r6);

  \node (s1) at (12,-16) [circle, draw, inner sep=0pt, minimum width=4pt]{};
  \node (s2) at (14,-16) [circle, draw, inner sep=0pt, minimum width=4pt]{};
  \node (s3) at (16,-16) [circle, draw, inner sep=0pt, minimum width=4pt]{};
  \node (s4) at (12,-18) [circle, draw, inner sep=0pt, minimum width=4pt]{};
  \node (s5) at (14,-18) [circle, draw, inner sep=0pt, minimum width=4pt]{};
  \node (s6) at (16,-18) [circle, draw, inner sep=0pt, minimum width=4pt]{};

  \draw[-] (s1) to (s2);
  \draw[-] (s2) to (s3);
  \draw[-] (s1) to (s4);
  \draw[-] (s2) to (s4);
  \draw[-] (s2) to (s5);
  \draw[-] (s3) to (s5);
  \draw[-] (s3) to (s6);
  \draw[-] (s4) to (s5);
  \draw[-] (s5) to (s6);

  \node (t1) at (18,-16) [circle, draw, inner sep=0pt, minimum width=4pt]{};
  \node (t2) at (20,-16) [circle, draw, inner sep=0pt, minimum width=4pt]{};
  \node (t3) at (22,-16) [circle, draw, inner sep=0pt, minimum width=4pt]{};
  \node (t4) at (18,-18) [circle, draw, inner sep=0pt, minimum width=4pt]{};
  \node (t5) at (20,-18) [circle, draw, inner sep=0pt, minimum width=4pt]{};
  \node (t6) at (22,-18) [circle, draw, inner sep=0pt, minimum width=4pt]{};

  \draw[-] (t1) to (t2);
  \draw[-] (t2) to (t3);
  \draw[-] (t1) to (t4);
  \draw[-] (t2) to (t5);
  \draw[-] (t3) to (t4);
  \draw[-] (t3) to (t5);
  \draw[-] (t2) to (t6);
  \draw[-] (t4) to (t5);

  \node (u1) at (0,-20) [circle, draw, inner sep=0pt, minimum width=4pt]{};
  \node (u2) at (2,-20) [circle, draw, inner sep=0pt, minimum width=4pt]{};
  \node (u3) at (4,-20) [circle, draw, inner sep=0pt, minimum width=4pt]{};
  \node (u4) at (0,-22) [circle, draw, inner sep=0pt, minimum width=4pt]{};
  \node (u5) at (2,-22) [circle, draw, inner sep=0pt, minimum width=4pt]{};
  \node (u6) at (4,-22) [circle, draw, inner sep=0pt, minimum width=4pt]{};

  \draw[-] (u1) to (u2);
  \draw[-] (u2) to (u3);
  \draw[-] (u1) to (u4);
  \draw[-] (u1) to (u5);
  \draw[-] (u2) to (u4);
  \draw[-] (u2) to (u5);
  \draw[-] (u4) to (u3);
  \draw[-] (u3) to (u5);
  \draw[-] (u4) to (u5);
  \draw[-] (u3) to (u6);

  \node (v1) at (6,-20) [circle, draw, inner sep=0pt, minimum width=4pt]{};
  \node (v2) at (8,-20) [circle, draw, inner sep=0pt, minimum width=4pt]{};
  \node (v3) at (10,-20) [circle, draw, inner sep=0pt, minimum width=4pt]{};
  \node (v4) at (6,-22) [circle, draw, inner sep=0pt, minimum width=4pt]{};
  \node (v5) at (8,-22) [circle, draw, inner sep=0pt, minimum width=4pt]{};
  \node (v6) at (10,-22) [circle, draw, inner sep=0pt, minimum width=4pt]{};

  \draw[-] (v1) to (v2);
  \draw[-] (v2) to (v3);
  \draw[-] (v1) to (v6);
  \draw[-] (v2) to (v5);
  \draw[-] (v2) to (v6);
  \draw[-] (v1) to (v4);
  \draw[-] (v3) to (v6);
  \draw[-] (v3) to (v5);
  \draw[-] (v4) to (v5);
  \draw[-] (v5) to (v6);

  \node (w1) at (12,-20) [circle, draw, inner sep=0pt, minimum width=4pt]{};
  \node (w2) at (14,-20) [circle, draw, inner sep=0pt, minimum width=4pt]{};
  \node (w3) at (16,-20) [circle, draw, inner sep=0pt, minimum width=4pt]{};
  \node (w4) at (12,-22) [circle, draw, inner sep=0pt, minimum width=4pt]{};
  \node (w5) at (14,-22) [circle, draw, inner sep=0pt, minimum width=4pt]{};
  \node (w6) at (16,-22) [circle, draw, inner sep=0pt, minimum width=4pt]{};

  \draw[-] (w1) to (w2);
  \draw[-] (w2) to (w3);
  \draw[-] (w1) to (w4);
  \draw[-] (w1) to (w5);
  \draw[-] (w1) to (w6);
  \draw[-] (w2) to (w4);
  \draw[-] (w2) to (w5);
  \draw[-] (w2) to (w6);
  \draw[-] (w3) to (w5);
  \draw[-] (w3) to (w6);
  \draw[-] (w4) to (w5);
  \draw[-] (w5) to (w6);
  \draw[bend right=20] (w4) to (w6);

  \node (x1) at (18,-20) [circle, draw, inner sep=0pt, minimum width=4pt]{};
  \node (x2) at (20,-20) [circle, draw, inner sep=0pt, minimum width=4pt]{};
  \node (x3) at (22,-20) [circle, draw, inner sep=0pt, minimum width=4pt]{};
  \node (x4) at (18,-22) [circle, draw, inner sep=0pt, minimum width=4pt]{};
  \node (x5) at (20,-22) [circle, draw, inner sep=0pt, minimum width=4pt]{};
  \node (x6) at (22,-22) [circle, draw, inner sep=0pt, minimum width=4pt]{};

  \draw[-] (x1) to (x2);
  \draw[-] (x2) to (x3);
  \draw[-] (x1) to (x4);
  \draw[-] (x1) to (x5);
  \draw[-] (x2) to (x4);
  \draw[-] (x3) to (x6);
  \draw[-] (x4) to (x5);
  \draw[-] (x5) to (x6);
  \draw[bend right=20] (x4) to (x6);

  \node (y1) at (0,-24) [circle, draw, inner sep=0pt, minimum width=4pt]{};
  \node (y2) at (2,-24) [circle, draw, inner sep=0pt, minimum width=4pt]{};
  \node (y3) at (4,-24) [circle, draw, inner sep=0pt, minimum width=4pt]{};
  \node (y4) at (0,-26) [circle, draw, inner sep=0pt, minimum width=4pt]{};
  \node (y5) at (2,-26) [circle, draw, inner sep=0pt, minimum width=4pt]{};
  \node (y6) at (4,-26) [circle, draw, inner sep=0pt, minimum width=4pt]{};

  \draw[-] (y1) to (y2);
  \draw[-] (y2) to (y3);
  \draw[-] (y1) to (y4);
  \draw[-] (y1) to (y5);
  \draw[-] (y2) to (y5);
  \draw[-] (y2) to (y6);
  \draw[-] (y3) to (y4);
  \draw[-] (y3) to (y6);
  \draw[-] (y4) to (y5);
  \draw[-] (y5) to (y6);

  \node (z1) at (6,-24) [circle, draw, inner sep=0pt, minimum width=4pt]{};
  \node (z2) at (8,-24) [circle, draw, inner sep=0pt, minimum width=4pt]{};
  \node (z3) at (10,-24) [circle, draw, inner sep=0pt, minimum width=4pt]{};
  \node (z4) at (6,-26) [circle, draw, inner sep=0pt, minimum width=4pt]{};
  \node (z5) at (8,-26) [circle, draw, inner sep=0pt, minimum width=4pt]{};
  \node (z6) at (10,-26) [circle, draw, inner sep=0pt, minimum width=4pt]{};

  \draw[-] (z1) to (z2);
  \draw[-] (z2) to (z3);
  \draw[-] (z1) to (z4);
  \draw[-] (z1) to (z5);
  \draw[-] (z2) to (z4);
  \draw[-] (z2) to (z5);
  \draw[-] (z2) to (z6);
  \draw[-] (z3) to (z5);
  \draw[-] (z3) to (z6);
  \draw[-] (z4) to (z5);
  \draw[-] (z5) to (z6);
  \draw[bend right=20] (z4) to (z6);

  \node (zz1) at (12,-24) [circle, draw, inner sep=0pt, minimum width=4pt]{};
  \node (zz2) at (14,-24) [circle, draw, inner sep=0pt, minimum width=4pt]{};
  \node (zz3) at (16,-24) [circle, draw, inner sep=0pt, minimum width=4pt]{};
  \node (zz4) at (12,-26) [circle, draw, inner sep=0pt, minimum width=4pt]{};
  \node (zz5) at (14,-26) [circle, draw, inner sep=0pt, minimum width=4pt]{};
  \node (zz6) at (16,-26) [circle, draw, inner sep=0pt, minimum width=4pt]{};

  \draw[-] (zz1) to (zz2);
  \draw[-] (zz2) to (zz3);
  \draw[-] (zz1) to (zz4);
  \draw[-] (zz1) to (zz5);
  \draw[-] (zz1) to (zz6);
  \draw[-] (zz2) to (zz4);
  \draw[-] (zz3) to (zz5);
  \draw[-] (zz3) to (zz6);
  \draw[-] (zz4) to (zz5);
  \draw[-] (zz5) to (zz6);
  \draw[bend right=20] (zz4) to (zz6);

  \node (xx1) at (18,-24) [circle, draw, inner sep=0pt, minimum width=4pt]{};
  \node (xx2) at (20,-24) [circle, draw, inner sep=0pt, minimum width=4pt]{};
  \node (xx3) at (22,-24) [circle, draw, inner sep=0pt, minimum width=4pt]{};
  \node (xx4) at (18,-26) [circle, draw, inner sep=0pt, minimum width=4pt]{};
  \node (xx5) at (20,-26) [circle, draw, inner sep=0pt, minimum width=4pt]{};
  \node (xx6) at (22,-26) [circle, draw, inner sep=0pt, minimum width=4pt]{};

  \draw[-] (xx1) to (xx2);
  \draw[-] (xx2) to (xx6);
  \draw[-] (xx1) to (xx4);
  \draw[-] (xx1) to (xx5);
  \draw[-] (xx2) to (xx4);
  \draw[-] (xx3) to (xx6);
  \draw[-] (xx4) to (xx3);
  \draw[-] (xx5) to (xx6);
  \draw[-] (xx2) to (xx5);
  \draw[-] (xx3) to (xx5);

  \node (uu1) at (0,-28) [circle, draw, inner sep=0pt, minimum width=4pt]{};
  \node (uu2) at (2,-28) [circle, draw, inner sep=0pt, minimum width=4pt]{};
  \node (uu3) at (4,-28) [circle, draw, inner sep=0pt, minimum width=4pt]{};
  \node (uu4) at (0,-30) [circle, draw, inner sep=0pt, minimum width=4pt]{};
  \node (uu5) at (2,-30) [circle, draw, inner sep=0pt, minimum width=4pt]{};
  \node (uu6) at (4,-30) [circle, draw, inner sep=0pt, minimum width=4pt]{};

  \draw[-] (uu1) to (uu2);
  \draw[-] (uu5) to (uu6);
  \draw[-] (uu1) to (uu4);
  \draw[-] (uu1) to (uu5);
  \draw[-] (uu2) to (uu5);
  \draw[-] (uu4) to (uu3);
  \draw[-] (uu3) to (uu5);
  \draw[-] (uu4) to (uu5);
  \draw[-] (uu3) to (uu6);
  \draw[-] (uu2) to (uu6);

\end{tikzpicture}
\caption{Non-admissible Carter diagrams of type $E_6$} \label{fig:CarterE6}
\end{figure}
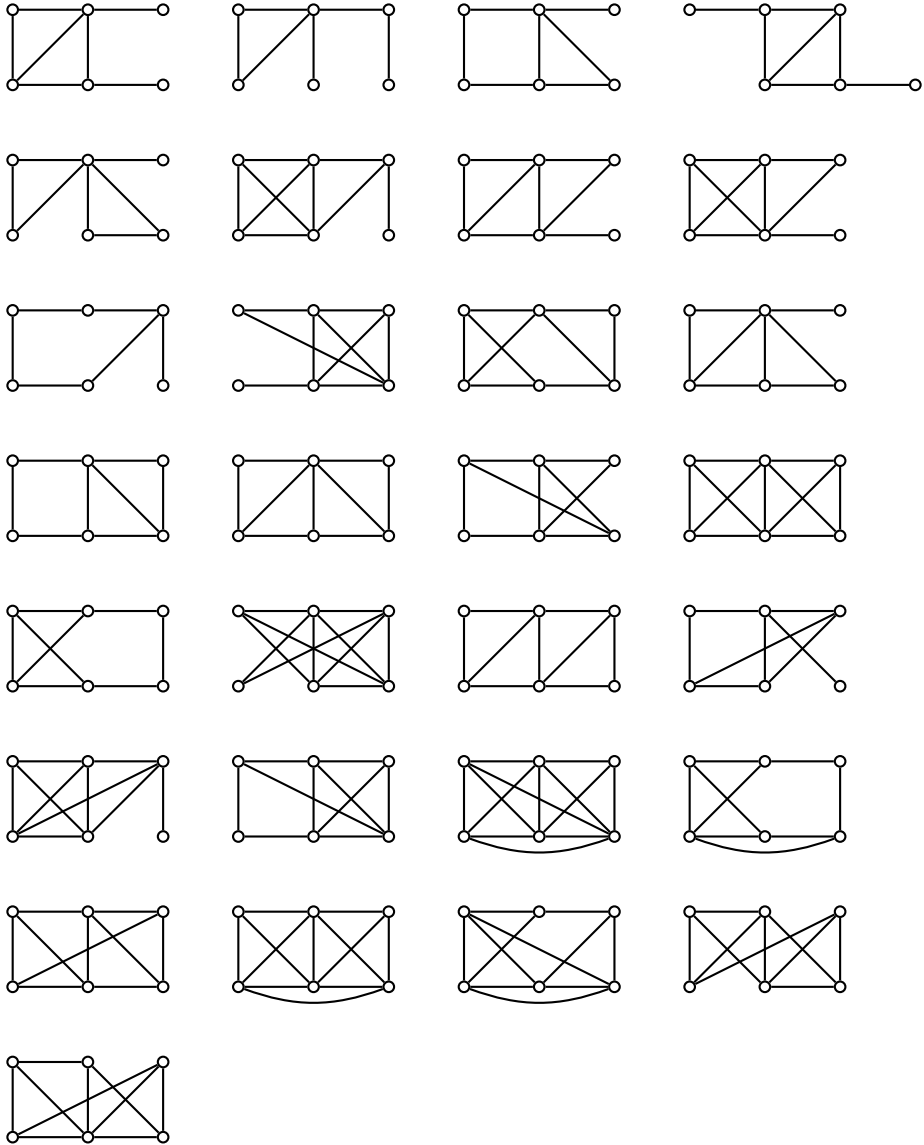

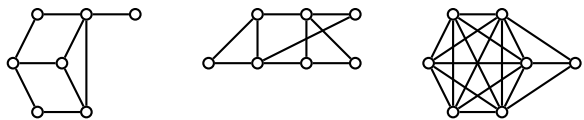
\begin{figure}[H] 
\centering
\begin{tikzpicture}[scale=0.65, thick,>=latex]

  \node (A1) at (0,0) [circle, draw, inner sep=0pt, minimum width=4pt]{};
  \node (A2) at (0.5,1) [circle, draw, inner sep=0pt, minimum width=4pt]{};
  \node (A3) at (0.5,-1) [circle, draw, inner sep=0pt, minimum width=4pt]{};
  \node (A4) at (1,0) [circle, draw, inner sep=0pt, minimum width=4pt]{};
  \node (A5) at (1.5,1) [circle, draw, inner sep=0pt, minimum width=4pt]{};
  \node (A6) at (1.5,-1) [circle, draw, inner sep=0pt, minimum width=4pt]{};
  \node (A7) at (2.5,1) [circle, draw, inner sep=0pt, minimum width=4pt]{};

  \draw[-] (A1) to (A2);
  \draw[-] (A1) to (A3);
  \draw[-] (A1) to (A4);
  \draw[-] (A2) to (A5);
  \draw[-] (A3) to (A6);
  \draw[-] (A4) to (A5);
  \draw[-] (A4) to (A6);
  \draw[-] (A5) to (A6);
  \draw[-] (A5) to (A7);

  \node (B1) at (4,0) [circle, draw, inner sep=0pt, minimum width=4pt]{};
  \node (B2) at (5,1) [circle, draw, inner sep=0pt, minimum width=4pt]{};
  \node (B3) at (5,0) [circle, draw, inner sep=0pt, minimum width=4pt]{};
  \node (B4) at (6,1) [circle, draw, inner sep=0pt, minimum width=4pt]{};
  \node (B5) at (6,0) [circle, draw, inner sep=0pt, minimum width=4pt]{};
  \node (B6) at (7,1) [circle, draw, inner sep=0pt, minimum width=4pt]{};
  \node (B7) at (7,0) [circle, draw, inner sep=0pt, minimum width=4pt]{};
  
  \draw[-] (B1) to (B2);
  \draw[-] (B1) to (B3);
  \draw[-] (B2) to (B3);
  \draw[-] (B2) to (B4);
  \draw[-] (B3) to (B5);
  \draw[-] (B3) to (B6);
  \draw[-] (B4) to (B5);
  \draw[-] (B4) to (B6);
  \draw[-] (B4) to (B7);
  \draw[-] (B5) to (B7);

  \node (C1) at (8.5,0) [circle, draw, inner sep=0pt, minimum width=4pt]{};
  \node (C2) at (9,1) [circle, draw, inner sep=0pt, minimum width=4pt]{};
  \node (C3) at (9,-1) [circle, draw, inner sep=0pt, minimum width=4pt]{};
  \node (C4) at (10.5,0) [circle, draw, inner sep=0pt, minimum width=4pt]{};
  \node (C5) at (10,1) [circle, draw, inner sep=0pt, minimum width=4pt]{};
  \node (C6) at (10,-1) [circle, draw, inner sep=0pt, minimum width=4pt]{};
  \node (C7) at (11.5,0) [circle, draw, inner sep=0pt, minimum width=4pt]{};

  \draw[-] (C1) to (C2);
  \draw[-] (C1) to (C3);
  \draw[-] (C1) to (C4);
  \draw[-] (C1) to (C5);
  \draw[-] (C1) to (C6);
  \draw[-] (C2) to (C3);
  \draw[-] (C2) to (C4);
  \draw[-] (C2) to (C5);
  \draw[-] (C2) to (C6);
  \draw[-] (C3) to (C4);
  \draw[-] (C3) to (C5);
  \draw[-] (C3) to (C6);
  \draw[-] (C4) to (C5);
  \draw[-] (C4) to (C6);
  \draw[-] (C5) to (C6);
  \draw[-] (C4) to (C7);
  \draw[-] (C5) to (C7);
  \draw[-] (C6) to (C7);

\end{tikzpicture}
\caption{Examples of Carter diagrams of type $E_7$} \label{fig:CarterE7}
\end{figure}

\begin{figure}[H] 
\centering
\begin{tikzpicture}[scale=0.65, thick,>=latex]

  \node (B1) at (4,0) [circle, draw, inner sep=0pt, minimum width=4pt]{};
  \node (B2) at (5,1) [circle, draw, inner sep=0pt, minimum width=4pt]{};
  \node (B3) at (5,0) [circle, draw, inner sep=0pt, minimum width=4pt]{};
  \node (B4) at (6,1) [circle, draw, inner sep=0pt, minimum width=4pt]{};
  \node (B5) at (6,0) [circle, draw, inner sep=0pt, minimum width=4pt]{};
  \node (B6) at (7,1) [circle, draw, inner sep=0pt, minimum width=4pt]{};
  \node (B7) at (7,0) [circle, draw, inner sep=0pt, minimum width=4pt]{};
  \node (B8) at (6,-1) [circle, draw, inner sep=0pt, minimum width=4pt]{};
  
  \draw[-] (B1) to (B3);
  \draw[-] (B2) to (B3);
  \draw[-] (B2) to (B5);
  \draw[-] (B2) to (B4);
  \draw[-] (B3) to (B5);
  \draw[-] (B3) to (B4);
  \draw[-] (B4) to (B5);
  \draw[-] (B4) to (B6);
  \draw[-] (B4) to (B7);
  \draw[-] (B5) to (B7);
  \draw[-] (B6) to (B7);
  \draw[-] (B5) to (B8);

\end{tikzpicture}
\caption{A Carter diagram of type $E_8$} \label{fig:CarterE8}
\end{figure}
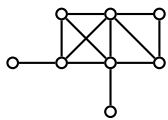

\begin{figure}[H] 
\centering
\begin{tikzpicture}[scale=0.65, thick, >=latex, double equal sign distance]
   \tikzset{triple/.style={double,postaction={draw,-}}}

  \node (A11) at (3,0) [circle, draw, inner sep=0pt, minimum width=4pt]{};
  \node (21) at (1,-1) [circle, draw, inner sep=0pt, minimum width=4pt]{};
  \node (A31) at (3,-2) [circle, draw, inner sep=0pt, minimum width=4pt]{};
  \node (A41) at (2,-1) [circle, draw, inner sep=0pt, minimum width=4pt]{};

  \draw[-] (21) to (A41);
  \draw[double] (A31) to (A41);
  \draw[double] (A41) to (A11);
  \draw[-] (A31) to (A11);

  \node (b1) at (5,0) [circle, draw, inner sep=0pt, minimum width=4pt]{};
  \node (b2) at (7,0) [circle, draw, inner sep=0pt, minimum width=4pt]{};
  \node (b3) at (7,-2) [circle, draw, inner sep=0pt, minimum width=4pt]{};
  \node (b4) at (5,-2) [circle, draw, inner sep=0pt, minimum width=4pt]{};
  
  \draw[double] (b1) to (b2);
  \draw[-] (b1) to (b3);
  \draw[double] (b3) to (b4);
  \draw[double] (b4) to (b1);
  \draw[-] (b2) to (b4);

  \node (c1) at (9,0) [circle, draw, inner sep=0pt, minimum width=4pt]{};
  \node (c2) at (11,0) [circle, draw, inner sep=0pt, minimum width=4pt]{};
  \node (c3) at (11,-2) [circle, draw, inner sep=0pt, minimum width=4pt]{};
  \node (c4) at (9,-2) [circle, draw, inner sep=0pt, minimum width=4pt]{};
  
  \draw[double] (c1) to (c2);
  \draw[double] (c2) to (c3);
  \draw[double] (c3) to (c4);
  \draw[double] (c4) to (c1);
  \draw[-] (c2) to (c4);
  \draw[-] (c1) to (c3);

\end{tikzpicture}
\caption{Non-admissible Carter diagrams of type $F_4$} \label{fig:CarterF4}
\end{figure}
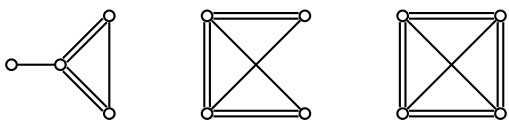

\begin{remark}
Let us explain how we carried out the computations to obtain all Carter diagrams (up to isomorphism) of an exceptional type $X_n \in \{ E_6, E_7, E_8, F_4 \}$. By Remark \ref{rem:MinCarter} we just have to consider Carter diagrams associated to reduced reflection factorizations of quasi Coxeter elements in a Coxeter group of type $X_n$. By Lemma \ref{lem:ConjCarterDiag}, we just have to consider one fixed quasi-Coxeter element for each conjugacy class. Given a quasi-Coxeter element $w$, all reduced reflection factorizations of $w$ are precisely given by the Hurwitz orbit of one given reduced reflection factorization \cite[Theorem 1.1]{BGRW}. The programs to carry out these computations in GAP \cite{GAP2017} can be found at:

\url{https://www.math.uni-bielefeld.de/~baumeist/Dual-Coxeter/dual-Coxeter.html}

\noindent Given a reduced reflection factorization $(t_1, \ldots , t_n)$, the computation of the Carter diagram associated to this factorization is easily done by computing the order of $t_it_j$ for all $1 \leq i < j \leq n$. Finally we used Sage \cite{Sage} to obtain the complete list of Carter diagrams up to isomorphism. Sage provides the command ``$G$.is$_{-}$isomorphic($H$)'' to check whether two graphs $G$ and $H$ are isomorphic. 
\end{remark}

\section{Quiver Mutation} \label{sec:QuiverMutation}

The aim of this section is to establish a connection between Carter diagrams and the mutation classes of Dynkin quivers. Namely we prove Theorem \ref{thm:Main1} in this section

Quiver mutation appears as an important concept in the theory of cluster algebras. We will shortly review the necessary definitions. A \defn{quiver} $Q=(Q_0, Q_1, s,t)$ is a directed graph on vertex set $Q_0$, edges given by the set $Q_1$ and maps $s,t: Q_1 \rightarrow Q_0$ such that $s(\alpha)=i$ and $t(\alpha)=j$ whenever $\alpha \in Q_1$ is an arrow from $i \in Q_0$ to $j \in Q_0$. 

We will assume throughout that quivers do not have loops or $2$-cycles. 

\begin{Definition}
Let $Q=(Q_0, Q_1, s,t)$ be a quiver. The \defn{mutation at vertex $k \in Q_0$} is the quiver $\mu_k(Q)= (Q_0^*, Q_1^*, s^*, t^*)$ obtained as follows:
\begin{itemize}
\item all arrows incident with k are reversed (that is, $s^*(\alpha)= t(\alpha)$ and $t^*(\alpha)=s(\alpha)$ for all $\alpha \in Q_1$ incident with $k$);
\item whenever $i, j\in Q_0$ are such that there are $m >0$ arrows from $i$ to $k$ (in $Q$) and $n>0$ arrows from $k$ to $j$ (in $Q$), first add $mn$ arrows from $i$ to $j$. Then remove a maximal number of $2$-cycles. 
\end{itemize}

\end{Definition}

\begin{Definition}
Let $\Gamma$ be an undirected graph.
\begin{enumerate}
\item[(a)] A \defn{cycle} in $\Gamma$ is a subgraph which is isomorphic to the graph on vertex set $[n]$ whose edges are $(1,2), \ldots, (n-1, n), (n,1)$. 
\item[(b)] A full subgraph of $\Gamma$ which is a cycle is called \defn{chordless cycle}.
\item[(c)] The graph $\Gamma$ is called \defn{cyclically orientable} if it admits an orientation in which every chordless cycle of $\Gamma$ is cyclically oriented.
\end{enumerate}
\end{Definition}

\begin{example}
None of the graphs in Figure \ref{fig:CarterE7} is cyclically orientable. For instance, this can be easily seen by using a criterion provided by Gurvich \cite{Gur08}.
\end{example}

We divide the proof of Theorem \ref{thm:Main1} into three parts. Since the types $A_n$ and $B_n$ are closely related, we prove them together. Then we will prove Theorem \ref{thm:Main1} for the type $D_n$. Lastly we treat the exceptional types. 

\begin{Definition}
Let $\Gamma$ be a graph (possibly directed or with weighted edges). Let $v$ be a vertex of $\Gamma$. We define the \defn{valency} of $v$, denoted by $\val(v)$, to be the number of vertices of $\Gamma$ which are connected by a (directed or weighted) edge with $v$.

For example, in both graphs shown in Example \ref{ex:FromAtoB} we have $\val(v)=3$.
\end{Definition}

\begin{Proposition} \label{prop:ClusterIsCarterTypeA}
Theorem \ref{thm:Main1} is true for type $A_n$ (resp. type $B_n$). 

More precisley, let $Q$ be a quiver which is mutation-equivalent to an orientation of the Dynkin diagram of type $A_n$ (resp. $B_n$). Then the underlying undirected graph $\overline{Q}$ is a Carter diagram of type $A_n$ (resp. $B_n$).

Moreover, let $\Gamma$ be a Carter diagram of type $A_n$ (resp. $B_n$). Then there exists a quiver $Q$ which is mutation-equivalent to an orientation of the Dynkin diagram of type $A_n$ (resp. $B_n$) such that $\Gamma$ is isomorphic to $\overline{Q}$ if and only if $\Gamma$ is cyclically orientable.
\end{Proposition}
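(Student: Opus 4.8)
The plan is to derive all three assertions from two ingredients: the combinatorial description of type $A_n$ Carter diagrams (and, via Theorem~\ref{thm:CarterTypeB}, of type $B_n$ diagrams), together with the known classification of the mutation class of a type $A_n$ Dynkin quiver due to Buan and Vatne. That classification says that a connected quiver $Q$ is mutation-equivalent to an orientation of the $A_n$ diagram precisely when every chordless cycle of $\overline Q$ is a triangle carrying a cyclic orientation in $Q$, every vertex has valency at most $4$, and every vertex of valency $4$ (resp.\ $3$) is the common vertex of exactly two triangles (resp.\ of one triangle and one further edge). The crux is the identification of three families: the graphs $\overline Q$ with $Q$ mutation-equivalent to $A_n$; the type $A_n$ Carter diagrams all of whose blocks $\Gamma_1,\dots,\Gamma_k$ (in the sense of Corollary~\ref{cor:Kluitmann}) are edges or triangles; and the cyclically orientable type $A_n$ Carter diagrams. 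Since cyclic orientability depends only on the underlying unweighted graph, the type $B_n$ cyclic-orientability statement will reduce verbatim to the $A_n$ one through Theorem~\ref{thm:CarterTypeB}.

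First I would treat type $A_n$. For the forward direction, let $Q$ be mutation-equivalent to $A_n$. The Buan--Vatne conditions say exactly that the edges and triangles of $\overline Q$ are complete graphs meeting pairwise in at most one vertex, with each vertex lying in at most two of them; these are conditions (i)--(iii) of Corollary~\ref{cor:Kluitmann}. Condition (iv), $\sum_i(|\Gamma_i|-1)=n-1$, is the standard block--cut-vertex identity for a connected graph and follows by induction on the number of blocks. Hence $\overline Q$ is a type $A_n$ Carter diagram. For the necessity of cyclic orientability, the orientation carried by $Q$ itself makes every chordless cycle cyclically oriented, so transporting it across an isomorphism $\Gamma\cong\overline Q$ shows that any such $\Gamma$ is cyclically orientable.

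For the converse I would use the local fact that $K_m$ is cyclically orientable if and only if $m\le 3$: in $K_4$ on $\{1,2,3,4\}$, forcing the triangles $1\to2\to3\to1$ and $1\to2\to4\to1$ to be cyclic puts vertex $1$ at in-valency $2$ inside the triangle on $\{1,3,4\}$, which therefore cannot be cyclic. Since cyclic orientability passes to full subgraphs and a block of size $\ge 4$ is a full $K_4$, a cyclically orientable type $A_n$ Carter diagram $\Gamma$ has all blocks of size at most $3$ by Corollary~\ref{cor:Kluitmann}. Equipping such a $\Gamma$ with a cyclic orientation turns it into a quiver whose only cycles are oriented triangles and which satisfies the Buan--Vatne conditions; this quiver is therefore mutation-equivalent to $A_n$ and is the desired $Q$.

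Finally, for type $B_n$, Theorem~\ref{thm:CarterTypeB} presents a type $B_n$ Carter diagram as a type $A_n$ Carter diagram together with a distinguished vertex $v$, with $\Gamma\setminus\{v\}$ connected, all of whose incident edges have weight $2$; its underlying unweighted graph $\Gamma^{(1)}$ is the corresponding type $A_n$ Carter diagram, and $\Gamma$ is cyclically orientable if and only if $\Gamma^{(1)}$ is. The plan is to run the $A_n$ argument on $\Gamma^{(1)}$ while carrying the weight-$2$ structure along by working with the valued (skew-symmetrizable) quivers of type $B_n$, invoking the analogue of the Buan--Vatne description for the $B_n$ mutation class in which the weight-$2$ edges are exactly those incident to the image of the short simple root, this vertex playing the role of $v$. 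I expect the genuine obstacle to lie entirely in this $B_n$ half: one must show that the property ``all weight-$2$ edges are incident to a single vertex whose deletion leaves the diagram connected'' is precisely the invariant cut out by valued mutation of the $B_n$ Dynkin quiver. Controlling how the double edges propagate under valued-quiver mutation — either by a direct local analysis or by a folding argument relating $B_n$ valued quivers to the simply-laced quivers treated in the $A_n$ and $D_n$ cases — is where the real work lies, whereas the $A_n$ statement is essentially formal once Buan--Vatne and the $K_4$ obstruction are in hand.
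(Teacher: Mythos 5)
Your type $A_n$ argument is correct and is essentially the paper's own proof: both directions rest on the Buan--Vatne description (I)--(IV) of the $A_n$ mutation class, the verification of conditions (i)--(iv) of Corollary \ref{cor:Kluitmann} (the paper establishes (iv) by inclusion--exclusion and a connectivity argument where you invoke the block count, which amounts to the same thing), the observation that the orientation of $Q$ itself witnesses cyclic orientability, and the $K_4$ obstruction forcing all blocks of a cyclically orientable diagram to have size at most $3$ in the converse direction.

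The $B_n$ half, however, is not a proof but a plan with an acknowledged hole. Everything you defer --- ``invoking the analogue of the Buan--Vatne description for the $B_n$ mutation class'', showing that the property ``all weight-$2$ edges are incident to a single vertex whose deletion leaves the diagram connected'' is the invariant cut out by mutation, ``controlling how the double edges propagate under valued-quiver mutation'' --- is precisely the content the proof needs, and you neither prove it nor cite it. The paper closes exactly this gap by quoting Nakanishi--Stella \cite[Proposition 3.2]{NS14}: the underlying diagram of any quiver mutation-equivalent to an orientation of the $B_n$ Dynkin diagram has one of the two shapes in Figure \ref{fig:MutClassB}, namely cyclically orientable type $A$ pieces with all double edges incident to a single vertex. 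Once this is in hand, both $B_n$ assertions follow immediately from the $A_n$ case together with Theorem \ref{thm:CarterTypeB}; no folding argument or local analysis of double-edge propagation is required. Note also that your opening claim that the $B_n$ cyclic-orientability statement ``reduces verbatim'' to $A_n$ via Theorem \ref{thm:CarterTypeB} is only half the story: cyclic orientability of $\Gamma$ is indeed equivalent to that of its unweighted version, but the proposition demands a quiver in the $B_n$ mutation class whose underlying \emph{weighted} graph is $\Gamma$, i.e.\ whose double edges sit exactly where those of $\Gamma$ do --- and matching that weighted structure is again the Nakanishi--Stella input, not a formal consequence of the $A_n$ case.
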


\begin{proof}
We begin with the $A_n$-case. By \cite{BV08} the muation class of quivers of type $A_n$ is given by the connected quivers on $n$ vertices such that the following properties hold:
\begin{itemize}
\item[(I)] All non-trivial cycles are oriented and of length $3$.
\item[(II)] A vertex has valency at most $4$.
\item[(III)] If a vertex has valency $4$, then two of its adjacent arrows belong to one $3$-cycle, the other two belong to another $3$-cycle.
\item[(IV)] If a vertex has valency $3$, then two of its adjacent arrows belong to a $3$-cycle, the third arrow does not belong to another cycle.
\end{itemize}
Let $Q$ be a quiver on $n$ vertices fulfilling the properties (I)-(IV) above. By Corollary \ref{cor:Kluitmann} we have to show that $\overline{Q}$ belongs to $\mathcal{A}^{n,n}$, that is $\overline{Q}$ fulfills the properties (i)-(iv) of Corollary \ref{cor:Kluitmann}.
The claim is obvious for $n=1$. So let $n>1$. By (I) we conclude that we can write $\Gamma$ as the union
$$
\Gamma = \Gamma_1 \cup \cdots \cup \Gamma_k ~(k \in\NN),
$$ 
where each $\Gamma_i$ is a complete graph on two or three vertices, $\Gamma_i \not\subseteq \Gamma_j$ for $i \neq j$ and $k$ is minimal (that is, for instance, we exclude the possibility that the union of three complete graphs on two vertices is a complete graph on three vertices).

Let $i \neq j$ be such that $\Gamma_i \cap \Gamma_j \neq \varnothing$ and let $v$ be a vertex in $\Gamma_i \cap \Gamma_j$. By the previous arguments and by (II) we have $2 \leq \val(v) \leq 4$. By distinguishing all possible cases for the value of $\val(v)$, we can show that $v$ is the only vertex in $\Gamma_i \cap \Gamma_j$, hence (ii) holds. We exhibit the case $\val(v)=4$. In this case, condition (III) implies that $\Gamma_i$ and $\Gamma_j$ are both $3$-cycles intersecting just in the vertex $v$. 

Next we want to show (iii). Let $v$ be a vertex of $\Gamma$. If $\val(v) \leq 2$, then (iii) holds obviously for $v$. If $\val(v)=3$ (resp. $\val(v)=4$) then condition (IV) (resp. (III)) implies that $v$ belongs to exactly two of the subgraphs $\Gamma_i$. Thus (iii) holds. 

Finally we have to show (iv). Since (iii) holds, we know that $\cap_{i \in I} \Gamma_i = \varnothing$ for $I \subseteq [k]$ with $|I| \geq 3$. By the inclusion-exclusion principle we conclude that 
\begin{align} \label{equ:TypeAProof}
n = | \Gamma | = \sum_{i=1}^k | \Gamma_i | - \sum_{\substack{\{i,j\} \subseteq [k]\\ i \neq j}} | \Gamma_i \cap \Gamma_j |.
\end{align}
If $k=1$, we are done. Therefore let $k >1$. Since $\Gamma$ is connected, the graph $\Gamma_1$ has non-trivial intersection with one of the graphs $\Gamma_i$ with $i > 1$. After possible renumbering we can assume that $|\Gamma_1 \cap \Gamma_2|=1$ by (ii). Again, if $k=2$, we are done. Therefore let $k>2$. Since $\Gamma$ is connected, one of the graphs $\Gamma_i$ with $i >2$ has non-trivial intersection with $\Gamma_1$ or $\Gamma_2$. Again, after possible renumbering, we can assume that $i=3$ and $|\Gamma_2 \cap \Gamma_3 |=1$ by (ii). Using the conditions (I)-(IV) and the minimality of $k$, it is straightforward to see that $\Gamma_1 \cap \Gamma_3 = \varnothing$. Proceeding in this manner we obtain that $|\Gamma_i \cap \Gamma_{i+1}|=1$ for $i \in [k-1]$, while $\Gamma_i \cap \Gamma_j = \varnothing$ for $j\neq i, i+1$. We leave the details to the reader. Hence 
$$
\sum_{\substack{\{i,j\} \subseteq [k]\\ i \neq j}} | \Gamma_i \cap \Gamma_j | = k-1
$$
and we conclude by (\ref{equ:TypeAProof}) that (iv) holds.

Now let $Q$ be a quiver which is mutation-equivalent to an orientation of the Dynkin diagram of type $B_n$. By \cite[Proposition 3.2]{NS14} and what we have shown above for the $A_n$-case, $\overline{Q}$ is given by one of the graphs in Figure \ref{fig:MutClassB}, where $\Gamma_1$ and $\Gamma_2$ are cyclically orientable Carter diagrams of type $A_{k_1}$ and $A_{k_2}$ for some $k_1, k_2 \geq 1$.

\begin{figure}[H]
\centering
\begin{tikzpicture}[thick,>=latex, double equal sign distance]
   \tikzset{triple/.style={double,postaction={draw,-}}}

	\coordinate(0) at (0,0);
  \node (1) at (2,0) [circle, draw, inner sep=0pt, minimum width=4pt]{};
	\node (2) at (3,0) [circle, draw, inner sep=0pt, minimum width=4pt]{};
  \coordinate(3) at (2.3,0);
  \node(b1) at (1,0) []{$\Gamma_1$};

	\draw[bend right=65] (0) to (3);
  \draw[bend left=65] (0) to (3);
  \draw[double] (1) to (2);

  \coordinate(01) at (4,0);
  \node (11) at (6,0) [circle, draw, inner sep=0pt, minimum width=4pt]{};
	\node (21) at (7,-0.7) [circle, draw, inner sep=0pt, minimum width=4pt]{};
  \coordinate(31) at (6.3,0);
  \node(b11) at (5,0) []{$\Gamma_1$};

	\draw[bend right=65] (01) to (31);
  \draw[bend left=65] (01) to (31);
  \draw[double] (11) to (21);

  \coordinate(011) at (4,-1.4);
  \node (111) at (6,-1.4) [circle, draw, inner sep=0pt, minimum width=4pt]{};
  \coordinate(311) at (6.3,-1.4);
  \node(b111) at (5,-1.4) []{$\Gamma_2$};

	\draw[bend right=65] (011) to (311);
  \draw[bend left=65] (011) to (311);
  \draw[double] (111) to (21);
  \draw[-] (11) to (111);

\end{tikzpicture}
\caption{Quiver-mutation class $B_n$.} \label{fig:MutClassB}
\end{figure}
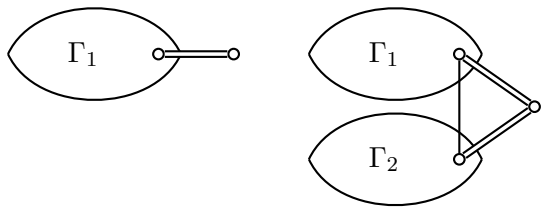
It follows immediately from our description of type $B_n$ Carter diagrams that $\overline{Q}$ is a Carter diagram of type $B_n$.

\medskip
To show the other assertion first note, that if $Q$ is a quiver which is mutation-equivalent to an orientation of the Dynkin diagram of type $A_n$ (resp. $B_n$), then in particular $Q$ admits a cyclic orientation. By what we have shown above, $\overline{Q}$ is a cyclically orientable Carter diagram.

For the other direction let us begin with a Carter diagram $\Gamma$ of type $A_n$, that is $\Gamma$ lies in $\mathcal{A}^{n,n}$ by Theorem \ref{thm:Kluitmann}, and assume that $\Gamma$ is cyclically orientable. We choose a cyclic orientation of $\Gamma$ and show that the conditions (I)-(IV) hold.

The complete graph on four vertices is not cyclically orientable. In particular, the complete graph on $\ell \geq 4$ vertices is not cyclically orientable. Therefore condition (i) from Theorem \ref{thm:Kluitmann} yields that $\Gamma$ is the union of complete graphs on two or three vertices. Hence (I) holds. Conditions (ii) and (iii) imply (II), while conditions (ii)-(iv) imply (III) and (IV). We leave the details again to the reader. 

Now let $\Gamma$ be a Carter diagram of type $B_n$ which is cyclically orientable. By our description of the type $B_n$ Carter diagrams, there is an unique vertex $v$ such that all edges adjacent with $v$ have weight $2$ and such that $\Gamma \setminus \{v\}$ is connected. Let us denote by $\Gamma_0$ the diagram obtained from $\Gamma$ by replacing edges of weight two with edges of weight one. By our description of the type $B_n$ Carter diagrams, the graph $\Gamma_0$ is a cyclically orientable Carter diagram of type $A_n$. In particular, $\Gamma$ is cyclically orientable. 

\end{proof}


\medskip
$\phantom{4}$

Before we show that Theorem \ref{thm:Main1} holds for type $D_n$, let us recall the description of the quiver mutation-class of type $D_n$ given by Vatne.

\begin{Lemma}[{\cite[Theorem 3.1]{Vat10}}] \label{lem:MutClassD}
For $n \geq 4$, a quiver $Q$ is mutation equivalent to an orientation of the Dynkin diagram of type $D_n$ if and only if $Q$ is one of the types (D1)-(D4) shown in Figure \ref{fig:MutClassD}
\end{Lemma}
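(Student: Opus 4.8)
The statement is quoted from Vatne, so the natural task is to indicate how such a mutation-class classification is established. The plan is to follow the standard strategy for pinning down a mutation class: exhibit a set $\mathcal{M}$ of quivers which (a) contains an orientation of the Dynkin diagram of type $D_n$, (b) is closed under the mutation operations $\mu_k$, and (c) is \emph{connected} under mutation, in the sense that every member can be transformed into that fixed orientation by a finite sequence of mutations. Properties (a) and (b) force the mutation class of $D_n$ to be \emph{contained} in $\mathcal{M}$, since the mutation class is by definition the smallest mutation-closed set containing the chosen orientation; property (c) yields the reverse inclusion, because every element of $\mathcal{M}$ is then mutation-equivalent to $D_n$. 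Here $\mathcal{M}$ is taken to be the family of all quivers of the four types (D1)--(D4) displayed in Figure \ref{fig:MutClassD}, and an orientation of the $D_n$ Dynkin tree appears as the degenerate member in which all type-$A$ strands are linear and the distinguished ``$D$-core'' reduces to the trivalent branch vertex of $D_n$, giving (a).

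The substantive work is verifying closure (b). Because mutation at a vertex $k$ only alters arrows incident with $k$ and arrows among the neighbours of $k$, this is a purely local computation. First I would enumerate, up to symmetry, the possible neighbourhoods a vertex can occupy inside a quiver of type (D1)--(D4): interior and terminal vertices of the type-$A$ strands, the branch vertex, and the vertices making up the $D$-core (the oriented $4$-cycle, respectively the pair of triangles sharing structure). For each such local picture one checks that $\mu_k$ again produces a quiver of one of the four listed forms and never manufactures a forbidden configuration (a non-oriented cycle, an oriented cycle of length $\geq 4$ away from the core, a double arrow, or a vertex of excessive valency). For the vertices lying wholly inside the type-$A$ strands one may simply invoke the mutation-class description of $A_n$ from \cite{BV08} --- conditions (I)--(IV) recalled in the proof of Proposition \ref{prop:ClusterIsCarterTypeA} --- so that genuinely new analysis is needed only where the $A$-parts meet the $D$-core.

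For connectedness (c) I would show that every member of $\mathcal{M}$ can be driven to an acyclic quiver by mutations: exactly as in the type-$A$ analysis a $3$-cycle can be ``opened'' by mutating at one of its vertices, and the $D$-core can likewise be resolved, so a complexity measure built from the number of oriented $3$-cycles together with the state of the core can be made to strictly decrease until the quiver is acyclic. Any acyclic quiver in $\mathcal{M}$ necessarily has underlying graph the $D_n$ Dynkin tree, and any two orientations of a tree are related by source/sink mutations; hence every member of $\mathcal{M}$ is mutation-equivalent to the chosen orientation of $D_n$, completing the reverse inclusion. I expect the main obstacle to be the closure verification of the previous paragraph, and within it the mutations performed \emph{at or adjacent to} the $D$-core: there a single mutation can convert one of the types (D1)--(D4) into another, and the real content is the finite but intricate bookkeeping of precisely how these four shapes map to one another, together with the confirmation that the transitions never leave the list.
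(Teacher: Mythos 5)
The first thing to note is that the paper contains no proof of this statement: Lemma \ref{lem:MutClassD} is imported verbatim from \cite[Theorem 3.1]{Vat10} and is used as a black box in the proof of Proposition \ref{prop:ClusterIsCarterTypeD}, so there is no internal argument to compare yours with; the fair comparison is with Vatne's own proof. Measured against that, your three-step scheme --- the listed family contains an orientation of the $D_n$ tree, is closed under mutation, and is mutation-connected to that orientation --- is the correct logical frame and is essentially the strategy of \cite{Vat10}, with \cite{BV08} playing exactly the role you assign it for vertices lying wholly inside the type-$A$ strands. What you have written, however, is a proof plan rather than a proof: the closure verification in your step (b), i.e.\ the finite case analysis of mutations at and adjacent to the core showing how the shapes (D1)--(D4) map into one another, and the strictly decreasing complexity argument in your step (c), are precisely the substantive content of Vatne's paper, and you defer both. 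One local remark: your assertion that any acyclic quiver in the family has underlying graph the $D_n$ tree is true but deserves its one-line justification --- the cores of (D2), (D3) and (D4) contain oriented cycles by definition, so an acyclic member must be of type (D1), and there the type-$A$ part must be an orientation of a path with the connecting vertex at its end (an interior vertex of a path has valency $2$ but lies in no $3$-cycle, hence is not a connecting vertex), which forces the $D_n$ tree; sink/source mutations then finish, as you say. In summary: right strategy, consistent with the cited source, but the verifications that turn the strategy into a theorem are exactly the parts left unexecuted.
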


\begin{figure}[H]
\centering
\begin{tikzpicture}[thick,>=latex]
  \node (N1) at (-0.35,0.5) []{(D1)};
	\coordinate(0) at (0,0);
  \node (1) at (2,0) [circle, draw, inner sep=0pt, minimum width=4pt]{};
	\node (2) at (2.5,0.5) [circle, draw, inner sep=0pt, minimum width=4pt]{};
  \node (2b) at (2.5,-0.5) [circle, draw, inner sep=0pt, minimum width=4pt]{};
  \coordinate(3) at (2.3,0);
  \node(b1) at (0.8,0) []{$\Gamma_1$};
  \node(b1D1) at (1.75,0) []{$v$};

	\draw[bend right=65] (0) to (3);
  \draw[bend left=65] (0) to (3);
  \draw[-] (1) to (2);
  \draw[-] (1) to (2b);

  \node (N2) at (5.65,0.5) []{(D2)};
  \coordinate(01) at (6,0);
  \node (11) at (8,0) [circle, draw, inner sep=0pt, minimum width=4pt]{};
  \coordinate(31) at (8.3,0);
  \node(b11) at (6.8,0) []{$\Gamma_1$};
  \node(b11D2) at (7.65,0) []{$v_1$};

	\draw[bend right=65] (01) to (31);
  \draw[bend left=65] (01) to (31);
  
    \coordinate(011) at (8.7,0);
  \node (111) at (9,0) [circle, draw, inner sep=0pt, minimum width=4pt]{};
  \coordinate(311) at (11,0);
  \node(b111) at (10.15,0) []{$\Gamma_2$};
  \node(b111D2) at (9.4,0) []{$v_2$};
  \node (411) at (8.5,0.5) [circle, draw, inner sep=0pt, minimum width=4pt]{};
  \node (511) at (8.5,-0.5) [circle, draw, inner sep=0pt, minimum width=4pt]{};
  
  \draw[<-] (11) to (411);
  \draw[->] (111) to (411);
  \draw[->] (111) to (511);
  \draw[->] (511) to (11);
  \draw[<-] (111) to (11);

	\draw[bend right=65] (011) to (311);
  \draw[bend left=65] (011) to (311);

  \node (N4) at (5.65,-1.5) []{(D4)};
  \coordinate(B01) at (6,-2);
  \node (B11) at (8,-2) [circle, draw, inner sep=0pt, minimum width=4pt]{};
  \coordinate(B31) at (8.3,-2);
  \node(Bb11) at (7,-2) []{$\Gamma_{ij}$};

	\draw[bend right=65] (B01) to (B31);
  \draw[bend left=65] (B01) to (B31);
  
  \coordinate(B011) at (9.2,-2);
  \node (B111) at (9.5,-2) [circle, draw, inner sep=0pt, minimum width=4pt]{};
  \coordinate(B311) at (11.5,-2);
  \node (B411) at (8.75,-2) [circle, draw, inner sep=0pt, minimum width=4pt]{};
  
  \node (B511) at (8,-2.65) [circle, draw, inner sep=0pt, minimum width=4pt]{};
  \node (B611) at (9.5,-2.65) [circle, draw, inner sep=0pt, minimum width=4pt]{};
  
  \draw[<-] (B11) to (B411);
  \draw[<-] (B411) to (B111);
  \draw[<-] (B111) to (B611);
  \draw[->] (B511) to (B411);
  \draw[<-] (B611) to (B411);
  \draw[->] (B11) to (B511);

	\draw[bend right=65] (B011) to (B311);
  \draw[bend left=65] (B011) to (B311);
  
  \coordinate(B711) at (8.75,-3.5);
  
  \draw[dotted, bend right=45] (B511) to (B711);
  \draw[dotted, bend left=45] (B611) to (B711);

  \node (N3) at (-0.35,-1.5) []{(D3)};
  \coordinate(aB01) at (0,-2);
  \node (aB11) at (2,-2) [circle, draw, inner sep=0pt, minimum width=4pt]{};
  \coordinate(aB31) at (2.3,-2);
  \node(aBb11) at (0.8,-2) []{$\Gamma_1$};
  \node(aBb11D3) at (1.7,-2) []{$v_1$};

	\draw[bend right=65] (aB01) to (aB31);
  \draw[bend left=65] (aB01) to (aB31);
  
  \coordinate(aB011) at (2.7,-2);
  \node (aB111) at (3,-2) [circle, draw, inner sep=0pt, minimum width=4pt]{};
  \coordinate(aB311) at (5,-2);
  \node(aBb111) at (4.3,-2) []{$\Gamma_2$};
  \node(aBb111D3) at (3.37,-2) []{$v_2$};
  \node (aB411) at (2.5,-1.5) [circle, draw, inner sep=0pt, minimum width=4pt]{};
  \node (aB511) at (2.5,-2.5) [circle, draw, inner sep=0pt, minimum width=4pt]{};
  
  \draw[<-] (aB11) to (aB411);
  \draw[<-] (aB411) to (aB111);
  \draw[<-] (aB111) to (aB511);
  \draw[<-] (aB511) to (aB11);

	\draw[bend right=65] (aB011) to (aB311);
  \draw[bend left=65] (aB011) to (aB311);

\end{tikzpicture}
\caption{Quiver-mutation class $D_n$.} \label{fig:MutClassD}
\end{figure}
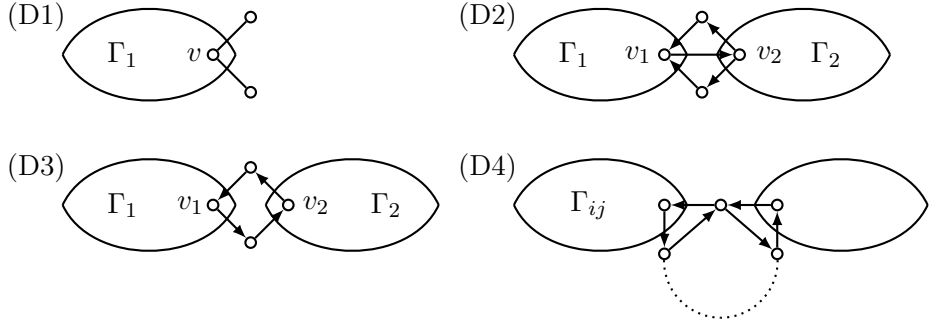

Let us make this description more precise (see \cite[Chapter 2]{Vat10}). Therefore denote by $\mathcal{M}_{A_n}$ the type $A_n$ mutation-class described by the properties (I)-(IV) in the proof of Proposition \ref{prop:ClusterIsCarterTypeA}. Call a vertex $v$ of a quiver $Q$ a \defn{connecting vertex} if $v$ has valency at most $2$ and, moreover, if $v$ has valency $2$, then $v$ is a vertex in a $3$-cycle in $Q$.
\begin{itemize}
\item[(D1):] $\Gamma_1$ is in $\mathcal{M}_{A_{n-2}}$ and $v$ is a connecting vertex for $\Gamma_1$
\item[(D2):] $\Gamma_1$ (resp. $\Gamma_2$) is in $\mathcal{M}_{A_{n_1}}$ (resp. $\mathcal{M}_{A_{n_2}}$) for some $n_1 \in \NN$ (resp. $n_2 \in \NN$) and $v_1$ (resp. $v_2$) is a connecting vertex for $\Gamma_1$ (resp. $\Gamma_2$).
\item[(D3):] See (D2).
\item[(D4):] The quiver $Q$ described by this type has a full subquiver which is a directed $k$-cycle ($k \geq 3$), called \defn{central cycle}. For each arrow $\alpha: i \rightarrow j$ in $Q$, there may (and may not) be a vertex $c_{ij}$ which is not in the central cycle, such that there is an oriented $3$-cycle $i \stackrel{\alpha}{\rightarrow} j \rightarrow v_{ij} \rightarrow i$. This $3$-cycle is a full subquiver. It is called \defn{spike}. There are no more arrows starting or ending in vertices on the central cycle. To each spike $i \stackrel{\alpha}{\rightarrow} j \rightarrow v_{ij} \rightarrow i$ there is a quiver $\Gamma_{ij}$ from $\mathcal{M}_{A_{n_{ij}}}$ attached, for some $n_{ij} \in \NN$. The vertex $v_{ij}$ is a connecting vertex for $\Gamma_{ij}$.
\end{itemize}
Note that in types (D2)-(D4), the subquivers $\Gamma_i$ resp. $\Gamma_{ij}$ might be in $\mathcal{M}_{A_1}$.

\begin{Proposition} \label{prop:ClusterIsCarterTypeD}
Theorem \ref{thm:Main1} is true for type $D_n$. 
\end{Proposition}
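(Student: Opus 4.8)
The plan is to prove both implications of Theorem \ref{thm:Main1} for $D_n$ by matching Vatne's description of the mutation class (Lemma \ref{lem:MutClassD}, types (D1)--(D4)) against the Kluitmann description of the type $D_n$ Carter diagrams, namely $\mathcal{A}^{n-1,n}$ (Theorem \ref{thm:CarterTypeD} together with Theorem \ref{thm:Kluitmann}). Throughout I would reuse the already established type $A$ statement (Proposition \ref{prop:ClusterIsCarterTypeA}) to handle the $A$-subquivers $\Gamma_1,\Gamma_2,\Gamma_{ij}$ occurring in (D1)--(D4), so that only the ``new'' central structure of each Vatne type has to be analysed by hand.

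First I would prove the inclusion ``mutation class $\subseteq$ Carter diagrams'', i.e. that for every $Q$ of type (D1)--(D4) the graph $\overline{Q}$ lies in $\mathcal{A}^{n-1,n}$. For each type one exhibits the Kluitmann data of Theorem \ref{thm:Kluitmann} with base parameter $n-1$ and length $n$: in (D1) the fork at the connecting vertex $v$ is a duplication of a pendant vertex attached to the type $A_{n-2}$ diagram $\overline{\Gamma_1}$ ($m'=n-1$, one duplication); in (D2) the two triangles sharing the edge $v_1v_2$ decompose as a $K_3$ together with two $K_2$'s whose block graph is a $3$-cycle; in (D3) the chordless $4$-cycle through $v_1,v_2$ is a block cycle of four edges; and in (D4) the central directed $k$-cycle with its spikes is a block cycle of $k$ pieces (each an edge, or a triangle where a spike sits) with the $A$-parts attached at the connecting spike-tips. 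In every case $\overline{Q}$ is a union of type $A$ Carter diagrams (Proposition \ref{prop:ClusterIsCarterTypeA}) glued along single connecting vertices, so it lies in $\mathcal{A}^{n-1,n}$ and hence is a Carter diagram of type $D_n$ by Theorem \ref{thm:CarterTypeD}. At the same time every chordless cycle of $Q$ is already cyclically oriented --- the oriented $3$-cycles inside the $A$-parts and the spikes, and the central directed cycle --- so $\overline{Q}$ is cyclically orientable; this is exactly the ``only if'' half of the equivalence in Theorem \ref{thm:Main1}.

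For the ``if'' half I would start from a cyclically orientable Carter diagram $\Gamma$ of type $D_n$ and build a quiver $Q$ in the mutation class. Since the complete graph on four vertices is not cyclically orientable, every complete subgraph of $\Gamma$ has at most three vertices. Because the diagrams in $\mathcal{A}^{n-1,n}$ arise from Theorem \ref{thm:Kluitmann} from an $A$-type cactus of complete graphs by a single duplication, $\Gamma$ carries exactly one independent cycle beyond those internal to its complete-graph blocks; concretely, its chordless cycles are triangles together with \emph{at most one} chordless cycle $C$ of length $\ge 4$ (a detour through a triangle meeting $C$ always acquires a chord). If no such $C$ exists, $\Gamma$ is a cactus of edges and triangles carrying one extra triangle or one ``fork'' (a vertex lying in three blocks), and one checks directly that it is of the form (D1) or (D2). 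If $C$ exists, I would orient $C$ as the central directed cycle, read the triangles sharing an edge with $C$ as spikes, and attach the remaining components --- which are themselves cyclically orientable type $A$ Carter diagrams, hence in $\mathcal{M}_{A}$ by Proposition \ref{prop:ClusterIsCarterTypeA} --- at the corresponding connecting vertices; this produces a quiver of type (D4) (or (D3) when $|C|=4$ with two opposite attachments) with $\overline{Q}\cong\Gamma$. One then verifies that the resulting $Q$ meets Vatne's admissibility conditions in Lemma \ref{lem:MutClassD}.

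The main obstacle is the bookkeeping in this last correspondence: making the dictionary between the cactus/block structure of $\mathcal{A}^{n-1,n}$ and Vatne's four families precise, and in particular pinning down that the single extra relation in a length-$n$ factorization manifests either as one chordless cycle of length $\ge 4$ or as one duplicated vertex, but never as two independent cycles. A subtle point is that Kluitmann's cases $m'=n$ and $m'=n-1$ overlap (duplicating a non-cut vertex of a triangle produces the same graph as a length-$3$ block cycle), so the argument should be organised around the chordless cycles of $\Gamma$ rather than around a fixed Kluitmann decomposition. Finally, to be safe one should confirm that ``no $K_4$'' together with the single long cycle really suffices for cyclic orientability: the orientation of the central cycle is forced and each triangle meeting it has a free third vertex, so a consistent cyclic orientation always exists, but invoking Gurvich's criterion \cite{Gur08} gives a clean certificate.
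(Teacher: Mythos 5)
Your first direction (mutation class $\Rightarrow$ Carter diagram, plus cyclic orientability of $\overline{Q}$) is essentially the paper's argument: for (D1)--(D3) one recognizes the graph inside $\mathcal{A}^{n-1,n}$ (the paper does it via a type $A_{n-1}$ diagram with one duplicated vertex), and for (D4) you rebuild by hand what the paper gets by citing Kluitmann's Proposition 5 (iii) a) (complete graphs arranged in a circle with side branches). Only note that your ``so it lies in $\mathcal{A}^{n-1,n}$'' is not automatic: a union of type $A$ Carter diagrams glued along single vertices in a \emph{tree} pattern is of type $A_n$, not $D_n$, so the circle structure together with the counting condition (iv) of Theorem \ref{thm:Kluitmann} must actually be verified.

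The converse direction, however, contains a genuine gap. You organize it around the dichotomy ``$\Gamma$ has a chordless cycle $C$ of length $\geq 4$'' versus ``all chordless cycles of $\Gamma$ are triangles'', and in the second case you claim $\Gamma$ must be of the form (D1) or (D2). This fails exactly for the (D4) quivers whose central cycle has length $3$. Concretely, for $n=5$ take the triangle $x_1x_2x_3$ with two spikes: adjoin $y_{12}$ adjacent to $x_1,x_2$ and $y_{23}$ adjacent to $x_2,x_3$. This is the Kluitmann diagram of the tuple $\bigl((1,2),(2,3),(1,3),(2,4),(3,5)\bigr) \in \Xi^{4,5}$, hence a type $D_5$ Carter diagram by Theorem \ref{thm:CarterTypeD}; it is cyclically orientable (orient the central triangle cyclically, then the orientations of the two spike triangles are forced and consistent); all of its chordless cycles are triangles. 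Yet it has no vertex of valency one (so it is not of type (D1)), it has no chordless $4$-cycle (so not (D3)), and it is not two triangles glued along an edge with the remaining vertices attached only at the endpoints of that edge (here $y_{23}$ is adjacent to $x_2$ \emph{and} $x_3$, so not (D2)): it is the underlying graph of a (D4) quiver with central $3$-cycle and two spikes, which your case distinction never produces. The paper avoids this by organizing the converse around the Kluitmann parameter rather than around chordless-cycle length: diagrams of type (D.II) ($m'=n$, blocks arranged in a circle with side branches) are matched with (D4) \emph{whatever} the length of the circle, and only the duplication case (D.I) is split by the valency of the duplicated vertex, where cyclic orientability rules out valencies $3$ and $4$. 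Your argument can be repaired by allowing the ``central'' cycle $C$ to be a triangle of the circle-of-blocks structure, but as written the dichotomy, and hence the step built on it, is false.
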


\begin{proof}
Let us first recall that by Theorem \ref{thm:CarterTypeD}, all Carter diagrams of type $D_n$ are given by the set $\mathcal{A}^{n-1,n}$. By Theorem \ref{thm:Kluitmann} the shape of a diagram $\Gamma$ in $\mathcal{A}^{n-1,n}$ depends on the choice of an integer $m'$, which is either equal to $n-1$ or to $n$. If $m'=n-1$, we call $\Gamma$ to be of type (D.I). In this type the diagram $\Gamma$ is given by a Carter diagram of type $A_{n-1}$ to which we attach a ``duplicated'' vertex. If $m'=n$, we call $\Gamma$ to be of type (D.II). 

\medskip
Let us start with a quiver $Q$ which is mutation-equivalent to an orientation of the Dynkin diagram of type $D_n$. We have to show that the underlying undirected graph $\overline{Q}$ is a Carter diagram of type $D_n$. We do this by showing that all four possible types of underlying undirected graphs in the mutation-class given by Lemma \ref{lem:MutClassD} can be realized by a Carter diagram of type $D_n$.

Let $Q$ be of type (D1). Then $\overline{Q}$ is given by the following picture. 
\begin{figure}[H]
\centering
\begin{tikzpicture}[thick,>=latex]
%
	\coordinate(0) at (0,0);
  \node (1) at (2,0) [circle, draw, inner sep=0pt, minimum width=4pt]{};
	\node (2) at (2.5,0.5) [circle, draw, inner sep=0pt, minimum width=4pt]{};
  \node (2b) at (2.5,-0.5) [circle, draw, inner sep=0pt, minimum width=4pt]{};
  \coordinate(3) at (2.3,0);
  \node(b1) at (0.8,0) []{$\Gamma_1$};
  \node(b1D1) at (2.75,0.5) []{$v$};
  \node(b2D1) at (2.75,-0.5) []{$w$};

	\draw[bend right=65] (0) to (3);
  \draw[bend left=65] (0) to (3);
  \draw[-] (1) to (2);
  \draw[.] (1) to (2b);

\end{tikzpicture}
\end{figure}

By Proposition \ref{prop:ClusterIsCarterTypeA}, the graph $\Gamma_1$ is a Carter diagram of type $A_{n-2}$. In particular, the full subgraph $\Gamma_1 \cup \{v \}$ is a Carter diagram of type $A_{n-1}$. Duplication of the vertex $v$ yields the above graph, which is therefore of type (D.I). 

Let $Q$ be of type (D2) or (D3). Then $\overline{Q}$ is given by the following picture (for type (D2) the vertices $v_1$ and $v_2$ are not connected, while for type (D3) they are).

\begin{figure}[H]
\centering
\begin{tikzpicture}[thick,>=latex]

  \coordinate(01) at (6,0);
  \node (11) at (8,0) [circle, draw, inner sep=0pt, minimum width=4pt]{};
  \coordinate(31) at (8.3,0);
  \node(b11) at (6.8,0) []{$\Gamma_1$};
  \node(b11D2) at (7.65,0) []{$v_1$};

	\draw[bend right=65] (01) to (31);
  \draw[bend left=65] (01) to (31);
  
    \coordinate(011) at (8.7,0);
  \node (111) at (9,0) [circle, draw, inner sep=0pt, minimum width=4pt]{};
  \coordinate(311) at (11,0);
  \node(b111) at (10.15,0) []{$\Gamma_2$};
  \node(b111D2) at (9.4,0) []{$v_2$};
  \node (411) at (8.5,0.5) [circle, draw, inner sep=0pt, minimum width=4pt]{};
  \node (411DD) at (8.5,0.72) []{$v$};
  \node (511) at (8.5,-0.5) [circle, draw, inner sep=0pt, minimum width=4pt]{};
  \node (411DDD) at (8.5,-0.8) []{$w$};
  
  \draw[-] (11) to (411);
  \draw[-] (111) to (411);
  \draw[-] (111) to (511);
  \draw[-] (511) to (11);
  \draw[dotted] (111) to (11);

	\draw[bend right=65] (011) to (311);
  \draw[bend left=65] (011) to (311);

\end{tikzpicture}
\end{figure}
By Proposition \ref{prop:ClusterIsCarterTypeA}, the full subgraph $\Gamma_1 \cup \Gamma_2 \cup \{ v \}$ is a Carter diagram of type $A_{n-1}$. Duplication of the vertex $v$ yields the above graph, which is therefore again of type (D.I).

Let $Q$ be of type (D4). We want to show that $\overline{Q}$ is of type (D.II). By \cite[Proposition 5 (iii) a)]{Klu88} any connected graph on $n$ vertices that is the union of complete graphs which are arranged in the form of a circle with some side branches, is a graph of type (D.II). In particular, if we assume that all of these complete graphs are complete graphs on two or three vertices, eventually we will end up with the graph $\overline{Q}$. 

\medskip
Finally, let $\Gamma$ be a Carter diagram of type $D_n$ which is cyclically orientable. We have to show that $\Gamma$ is ismorphic to $\overline{Q}$ for some quiver $Q$ which is mutation-equivalent to an orientation of the Dynkin diagram of type $D_n$. 

Let us first assume that $\Gamma$ is of type (D.I), that is $\Gamma = \Gamma' \cup \{ w \}$, where $\Gamma'$ is a Carter diagram of type $A_{n-1}$ and $w$ is the ``duplication'' of a vertex $v \in \Gamma'$. In particular, $\Gamma'$ is cyclically orientable and therefore $\val(v) \leq 4$. If $\val(v)=1$, then it is easy to see that $\Gamma$ is isomorphic to $\overline{Q}$ for some quiver $Q$ of type (D1). Similarly, if $\val(v)=2$, then $\Gamma$ is isomorphic to $\overline{Q}$ for some quiver $Q$ of type (D2) or (D3). Let $\val(v)=3$. All three vertices $v_1, v_2, v_3$ adjacent to $v$ have to be vertices of $\Gamma'$. Since $\Gamma'$ is cyclically orientable of type $A_{n-1}$, property (IV) from the proof of Proposition \ref{prop:ClusterIsCarterTypeA} holds. Let us assume that we have the following situation in $\Gamma'$:

\begin{figure}[H]
\centering
\begin{tikzpicture}[scale=0.75, thick,>=latex]

  \node (11) at (3,0) [circle, draw, inner sep=0pt, minimum width=4pt]{};
  \node (a11) at (3.4,0) []{$v_1$};
  \node (21) at (1,-1) [circle, draw, inner sep=0pt, minimum width=4pt]{};
  \node (a21) at (1,-0.6) []{$v_3$};
  \node (31) at (3,-2) [circle, draw, inner sep=0pt, minimum width=4pt]{};
  \node (a31) at (3.4,-2) []{$v_2$};
  \node (41) at (2,-1) [circle, draw, inner sep=0pt, minimum width=4pt]{};
  \node (a41) at (2,-0.6) []{$v$};

  \draw[-] (21) to (41);
  \draw[-] (31) to (41);
  \draw[-] (41) to (11);
  \draw[-] (31) to (11);

\end{tikzpicture}
\end{figure}
Duplication of $v$ yields the following full subgraph of $\Gamma$:
\begin{figure}[H]
\centering
\begin{tikzpicture}[scale=0.75, thick,>=latex]

  \node (11) at (3,-1) [circle, draw, inner sep=0pt, minimum width=4pt]{};
  \node (a11) at (3,-0.65) []{$v_1$};
  \node (21) at (3,0) [circle, draw, inner sep=0pt, minimum width=4pt]{};
  \node (a21) at (3,0.35) []{$v_3$};
  \node (31) at (3,-2) [circle, draw, inner sep=0pt, minimum width=4pt]{};
  \node (a31) at (3,-2.35) []{$v_2$};
  \node (41) at (2,-1) [circle, draw, inner sep=0pt, minimum width=4pt]{};
  \node (a41) at (1.65,-1) []{$v$};
  \node (51) at (4,-1) [circle, draw, inner sep=0pt, minimum width=4pt]{};
  \node (a51) at (4.35,-1) []{$w$};

  \draw[-] (21) to (41);
  \draw[-] (31) to (41);
  \draw[-] (41) to (11);
  \draw[-] (31) to (11);
  
  \draw[-] (21) to (51);
  \draw[-] (31) to (51);
  \draw[-] (51) to (11);

\end{tikzpicture}
\end{figure}
But this full subgraph is not cyclically orientable, hence $\Gamma$ cannot be cyclically orientable. Therefore the case that $v$ has valency $3$ does not occur. Similarly, we can show that the case that $v$ has valency $4$ does not occur.

Let us assume that $\Gamma$ is of type (D.II). By \cite[Proposition 5 (iii) a)]{Klu88} the graph $\Gamma$ is the union of complete graphs which are arranged in the form of a circle with some side branches. Each of these complete graphs has to be a complete graph on $2$ or $3$ vertices, since complete graphs on $\ell \geq 4$ vertices are not cyclically orientable and $\Gamma$ is cyclically orientable. Therefore $\Gamma$ is isomorphic to $\overline{Q}$ for some quiver $Q$ of type (D4).
\end{proof}

\begin{Proposition} \label{prop:ClusterIsCarterTypeX}
Theorem \ref{thm:Main1} is true for the exceptional types $E_6$, $E_7$, $E_8$, $F_4$ and $G_2$.


\end{Proposition}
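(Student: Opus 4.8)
The plan is to reduce the statement to a finite verification, exploiting the fact that the mutation class of a Dynkin quiver is finite for each exceptional type and that the complete lists of Carter diagrams were already produced in Section 2 (Figures \ref{fig:CarterE6}, \ref{fig:CarterF4}, \ref{fig:CarterE7}, \ref{fig:CarterE8} and the author's webpage, together with the admissible ones from \cite{Car72}). The single structural input I would isolate first is the following: every quiver $Q$ in the mutation class of an orientation of a Dynkin diagram is itself a \emph{cyclic orientation} of $\overline{Q}$, meaning that each chordless cycle of $\overline{Q}$ is a directed cycle in $Q$. For the classical types this is implicit in the explicit descriptions used in Propositions \ref{prop:ClusterIsCarterTypeA} and \ref{prop:ClusterIsCarterTypeD}; for the exceptional types I would confirm it along the way in the enumeration below. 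Granting it, $\overline{Q}$ is cyclically orientable whenever $Q$ lies in the mutation class, which already yields the ``only if'' direction of the second assertion of Theorem \ref{thm:Main1}.

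First I would enumerate, for each $X_n \in \{ E_6, E_7, E_8, F_4, G_2 \}$, the full mutation class of an acyclic orientation of the corresponding Dynkin diagram. This is the same kind of GAP \cite{GAP2017} computation already used in Section 2 to produce the Carter diagrams: starting from one orientation of the Dynkin diagram and applying all mutations repeatedly, the process terminates because the class is finite (the underlying root system being of finite type). From this I would extract the set $\mathcal{Q}_{X_n}$ of underlying undirected graphs $\overline{Q}$, recorded up to isomorphism using the ``is\_isomorphic'' routine of Sage \cite{Sage}. Comparing $\mathcal{Q}_{X_n}$ against the list of Carter diagrams of type $X_n$ then shows that every $\overline{Q}$ is a Carter diagram of type $X_n$, which is the first assertion of Theorem \ref{thm:Main1} for these types.

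For the converse it remains to prove the ``if'' direction: every cyclically orientable Carter diagram of type $X_n$ occurs as some $\overline{Q}$. Here I would form the sublist $\mathcal{C}^{\mathrm{co}}_{X_n}$ of those Carter diagrams of type $X_n$ that are cyclically orientable, testing cyclic orientability by Gurvich's criterion \cite{Gur08} as in the example following Lemma \ref{lem:MutClassD}, and then verify by direct comparison of the two finite lists that $\mathcal{C}^{\mathrm{co}}_{X_n} = \mathcal{Q}_{X_n}$ up to isomorphism. For $G_2$ this is immediate, since the only Carter diagram is the Dynkin diagram and both orientations of the $G_2$ quiver have it as underlying graph; for $F_4$ the comparison is a short hand check against Figure \ref{fig:CarterF4}; and for $E_6, E_7, E_8$ it is carried out by the same computation.

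The conceptual content here is slight, so the main obstacle is purely computational scale, most acutely for $E_8$, where there are $1242$ Carter diagrams and a correspondingly large mutation class. The delicate point to get right is the exact matching of the two lists: the inclusion $\mathcal{Q}_{X_n} \subseteq \mathcal{C}^{\mathrm{co}}_{X_n}$ is guaranteed conceptually by the oriented-cycle structure noted above, since no non-cyclically-orientable graph can appear as the underlying graph of a quiver in the class, whereas the reverse inclusion $\mathcal{C}^{\mathrm{co}}_{X_n} \subseteq \mathcal{Q}_{X_n}$ is the one that genuinely requires the exhaustive search to certify that no cyclically orientable Carter diagram is missed. Reliable graph-isomorphism testing across these sizable lists is the practical crux.
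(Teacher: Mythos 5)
Your proposal is correct and coincides with the paper's own proof, which likewise disposes of the exceptional types by a finite computation: inspecting the lists of Carter diagrams of types $E_6$, $E_7$, $E_8$, $F_4$, $G_2$ produced in Section 2 and comparing them with the mutation classes of the corresponding Dynkin quivers (the paper computes the latter with Keller's quiver mutation applet rather than GAP, which is immaterial). Your added structural remark that every quiver in such a mutation class cyclically orients its underlying graph is a correct and slightly cleaner way to see the ``only if'' direction, but it does not change the nature of the argument, which in both cases rests on the exhaustive list comparison.
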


\begin{proof}
This is done by inspection of our lists of Carter diagrams of exceptional type $X_n$ and the mutation classes of Dynkin quivers of type $X_n$. The latter ones can be computed using Keller's quiver mutation applet \cite{Kel}.
\end{proof}

\medskip
We have given a complete classification of Carter diagrams of Dynkin types. Thereafter we have seen that the cyclically orientable Carter diagrams are precisely the underlying undirected graphs of quivers which appear in the seeds of finite type cluster algebras. This leads to the following question:
\begin{question}
Let $\Phi$ be a crystallographic root system. Given a set of linearly independent roots $\{\beta_1, \ldots , \beta_m \} \subseteq \Phi$ and let $\Gamma$ be its associated Carter diagram. Is there a criterion to determine whether $\Gamma$ is cyclically orientable purely in terms of the root system?
\end{question}

\section{Presentations of Reflection Groups} \label{sec:Presentations}
In \cite{BM15} it is shown that each quiver which is mutation-equivalent to an orientation of a Dynkin diagram encodes a natural presentation of the corresponding finite Coxeter group (like the Dynkin diagram does; see Section \ref{sec:CoxeterI}).

We have seen in the previous chapters that the cyclically orientable Carter diagrams exactly provide the underlying graphs of quivers which are mutation-equivalent to a Dynkin diagram. The aim of this section is to show that all Carter diagrams, even those which are not cyclically orientable or simply-laced (that is, of type $A_n, D_n, E_6, E_7$ or $E_8$), provide a natural presentation (as given by Barot--Marsh) of the corresponding finite Coxeter group.

Let us begin with the observation that Theorem \ref{thm:Main2} holds for admissible Carter diagrams.
\begin{Proposition} \label{prop:Main2Admissible}
Let $\Phi$ be a crystallographic root system and let $\Gamma$ be an admissible Carter diagram of the same Dynkin type as $\Phi$. Then $W(\Gamma)$ is isomorphic to the Weyl group $W_{\Phi}$.
\end{Proposition}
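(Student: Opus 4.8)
The plan is to exhibit a surjection $\pi\colon W(\Gamma)\to W_\Phi$ and then bound $|W(\Gamma)|$ from above, the two together forcing an isomorphism. Writing $\beta_1,\dots,\beta_m$ for the roots defining $\Gamma$, so that vertex $i$ corresponds to $s_{\beta_i}$, I would set $\pi(t_i)=s_{\beta_i}$ and check that the $s_{\beta_i}$ satisfy (R1)--(R3). Relation (R1) is automatic and (R2) holds by the definition of the weights, since $m_{ij}=w_{ij}+2$ is the order of $s_{\beta_i}s_{\beta_j}$; surjectivity is then clear because the $s_{\beta_i}$ generate $W_\Phi$ when $\Gamma$ has the Dynkin type of $\Phi$ (Remark \ref{rem:MinCarter}).

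The content on the surjection side is (R3). For a chordless cycle $i_0,\dots,i_{d-1}$ put $g:=s_{\beta_{i_1}}\cdots s_{\beta_{i_{d-2}}}$; then $g\,s_{\beta_{i_{d-1}}}\,g^{-1}=s_\gamma$ with $\gamma:=g(\beta_{i_{d-1}})$, so the image of the word in (R3) is $(s_{\beta_{i_0}}s_\gamma)^2$, which is trivial exactly when $(\beta_{i_0}\mid\gamma)=0$. This orthogonality is a purely root-theoretic identity attached to the cycle, of the same nature as the computations in \cite{BM15}; for the admissible diagrams under consideration it can be verified along the cycle directly, and in the weight-$2$ case one uses the short/long pattern recorded in Theorem \ref{thm:CarterTypeB}.

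For injectivity I would split according to the lacing. In the simply-laced types $A_n,D_n,E_6,E_7,E_8$ every root has the same length, so all edges of $\Gamma$ are simple and $m_{ij}\in\{2,3\}$; then (R1)--(R3) is precisely the presentation that Cameron--Seidel--Tsaranov attach to an admissible diagram, and \cite{CST94} yields $|W(\Gamma)|=|W_\Phi|$, so $\pi$ is an isomorphism. The remaining admissible diagrams occur only in types $G_2$, $B_n$ and $F_4$. For $G_2$ the diagram is the Dynkin diagram and (R1)--(R3) is the ordinary Coxeter presentation. For $B_n$ and $F_4$ I would pass to a simply-laced cover and fold: the weight-$2$ edges at the distinguished vertex of Theorem \ref{thm:CarterTypeB} are the images of an order-two diagram automorphism, and folding the corresponding simply-laced presentation (of type $A_{2n-1}$, resp.\ $E_6$) identifies $W(\Gamma)$ with the fixed subgroup $W_\Phi$; the finitely many $F_4$ diagrams of Figure \ref{fig:CarterF4} can alternatively be checked by hand.

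The step I expect to be the main obstacle is this folding for the non-simply-laced types: one must match the weight-$2$ clause of (R3) with the relation inherited from the simply-laced cover under the diagram automorphism and confirm that the fixed subgroup is generated by the folded generators with no further collapse. Once the orthogonality underlying (R3) is in hand, surjectivity is routine and the order bound is delivered by \cite{CST94} together with the folding argument.
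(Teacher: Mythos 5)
Your surjection half is sound, and some version of it is needed in any proof: (R1) and (R2) are immediate, and the orthogonality behind (R3) is the same kind of root computation as in \cite{BM15}. The appeal to \cite{CST94} for the simply-laced admissible diagrams is also defensible, since the paper itself asserts that the simply-laced case of Theorem \ref{thm:Main2} is covered by that work. The genuine gap is the folding step for types $B_n$ and $F_4$. Two things are missing. First, you never establish that an admissible Carter diagram $\Gamma$ of type $B_n$ (resp.\ $F_4$) actually unfolds, i.e.\ that there exists a Carter diagram $\widetilde\Gamma$ of type $A_{2n-1}$ (resp.\ $E_6$) with an order-two graph automorphism $\tau$, exactly one fixed vertex, and orbit-quotient $\Gamma$; this holds in small examples but for $B_n$ it is an infinite family of claims. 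Second, and more seriously, even granting the unfolding and the isomorphism $W(\widetilde\Gamma)\cong W_{\widetilde\Phi}$, there is no tool that endows the fixed subgroup of $\tau$ with the folded presentation (R1)--(R3): the folding theorems you are implicitly invoking (Steinberg, M\"uhlherr) apply to genuine Coxeter presentations, whereas here the presentations carry the cycle relations (R3), and a fixed subgroup of an abstract group does not formally inherit any presentation. The natural attempt --- define $\phi\colon W(\Gamma)\to W(\widetilde\Gamma)$ by $t_v\mapsto\tilde t_v$ on the fixed vertex and $t_i\mapsto\tilde t_i\,\tilde t_{\tau(i)}$ on two-element orbits (well-definedness of which is itself unchecked), and compare with the surjections $\pi_\Gamma\colon W(\Gamma)\to W_\Phi$, $\pi_{\widetilde\Gamma}\colon W(\widetilde\Gamma)\to W_{\widetilde\Phi}$ and the inclusion $\iota\colon W_\Phi\hookrightarrow W_{\widetilde\Phi}$ --- is exactly circular: the square $\pi_{\widetilde\Gamma}\circ\phi=\iota\circ\pi_\Gamma$ together with injectivity of $\pi_{\widetilde\Gamma}$ and $\iota$ yields $\ker\phi=\ker\pi_\Gamma$, so injectivity of $\pi_\Gamma$ is equivalent to injectivity of $\phi$ and nothing has been gained. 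The ``no further collapse'' you flag as the main obstacle is therefore not a technical verification left to the reader; it is the whole problem, and your outline contains no mechanism to close it.

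For contrast, the paper's own proof sidesteps this entirely: by inspection of Carter's list, \emph{every} admissible diagram --- in particular every admissible diagram of type $B_n$, $F_4$ or $G_2$ --- is cyclically orientable, with the sole exceptions $E_7(a_4)$, $E_8(a_7)$ and $E_8(a_8)$. The cyclically orientable ones are handled at one stroke by Theorem \ref{thm:Main1} combined with \cite[Theorem A]{BM15}, and the three simply-laced exceptions are verified by a GAP computation. So the non-simply-laced diagrams, which your route finds hardest, are precisely the ones the Barot--Marsh result disposes of for free, while the only diagrams needing individual attention are simply-laced ones that your CST argument would have to cover.
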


\begin{proof}
For the cyclically orientable Carter diagrams, the assertion is a consequence of Theorem \ref{thm:Main1} and \cite[Theorem A]{BM15}. Inspection of Carter's list of admissible diagrams in \cite{Car72} yields that all of them are cyclically orientable except for $E_7(a_4), E_8(a_7)$ and $E_8(a_8)$. For these three cases it has been checked using GAP \cite{GAP2017} that the assertion holds. 
\end{proof}

\medskip
Before we start to explain the idea and to carry out the proof of Theorem \ref{thm:Main2}, let us remark the following fact about the relations (R3) given in Section \ref{sec:intro}.

\begin{Proposition}[{\cite[Lemma 4.1, Proposition 4.6]{BM15}}]
For any chordless cycle $C$ in a Carter diagram $\Gamma$, all relations of type (R3) attached to $C$ are equivalent to one fixed relation of type (R3) attached to $C$ (in the presence of the relations (R1) and (R2)). In particular, relation (R3) does neither depend on the choice of the vertex $i_0$ in the cycle $C$ nor on the direction of the cycle.
\end{Proposition}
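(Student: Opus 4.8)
The plan is to reformulate each relation (R3) as the statement that a specific \emph{pair of involutions commutes}, and then to prove that the pairs arising from the various admissible choices of basepoint $i_0$ and direction for the fixed cycle $C$ all lie in one simultaneous conjugacy class in the group $G:=\langle t_i \mid \text{(R1)},\text{(R2)}\rangle$. For a choice giving the cyclic sequence $i_0,i_1,\ldots,i_{d-1}$ the relator factors as
$$
W = t_{i_0}\,t_{i_1}\cdots t_{i_{d-2}}\,t_{i_{d-1}}\,t_{i_{d-2}}\cdots t_{i_1} = t_{i_0}\cdot (t_{i_{d-1}})^{P},\qquad P:=t_{i_1}\cdots t_{i_{d-2}},
$$
a product of the two involutions $p:=t_{i_0}$ and $q:=(t_{i_{d-1}})^{P}=P\,t_{i_{d-1}}\,P^{-1}$. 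Using only (R1) one has $W^2=1 \iff pq=qp$, a condition that is symmetric in $p,q$ and invariant under $(p,q)\mapsto(gpg^{-1},gqg^{-1})$. Since conjugating $W$ amounts to simultaneously conjugating the pair, and inverting $W$ merely swaps $p,q$, it follows that whenever two relators $W,W'$ attached to $C$ satisfy $W'=g\,W^{\pm1}g^{-1}$ in $G$ their relations are equivalent modulo (R1),(R2). Thus it suffices to connect any two admissible choices by elementary moves, each realised by such a conjugation.

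First I would dispose of the two moves that involve only the weight-$1$ edges of $C$; recall that, $C$ being chordless, non-consecutive vertices commute by (R2), and a weight-$1$ edge gives a braid relation $t\,t'\,t=t'\,t\,t'$. For \emph{reversal at a fixed basepoint} I would use that the nested palindrome over a weight-$1$ path is direction-independent, $t_{j_1}\cdots t_{j_r}\cdots t_{j_1}=t_{j_r}\cdots t_{j_1}\cdots t_{j_r}$ (an easy induction on $r$ via the braid relation at the last edge); applied to the word above this leaves $W$, hence the pair $(p,q)$, literally unchanged. For a \emph{basepoint shift across a weight-$1$ vertex}, if both edges of $C$ at $i_0$ are weight $1$ then rolling the basepoint onto $i_1$ is realised by conjugation with $t_{i_0}t_{i_1}$, the identity $(t_{i_0}t_{i_1})\,W\,(t_{i_0}t_{i_1})^{-1}=W'$ following from the two braid relations at the edges incident to $i_0$. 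Iterating, all basepoints reachable by rolling over weight-$1$ vertices yield simultaneously conjugate pairs.

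The remaining, and main, point is the passage at a weight-$2$ edge, where the braid relation fails and the previous moves are unavailable. Here I would exploit the admissibility clause of (R3): once $C$ carries a weight-$2$ edge the alternative ``all weights $1$'' is excluded, so every admissible relator must \emph{close} a weight-$2$ edge, i.e. $\{i_{d-1},i_0\}$ has weight $2$. Two facts then suffice, both provable with only (R1) and weight-$1$ braid relations. First, the two relators closing the \emph{same} weight-$2$ edge from its two endpoints are conjugate inverses: a bare cancellation gives $P^{-1}W^{-1}P=t_{i_{d-1}}\cdot(t_{i_0})^{P^{-1}}$, which is exactly the relator obtained by interchanging the roles of $i_0$ and $i_{d-1}$, and no braid relation is used. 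Second, two relators closing \emph{different} weight-$2$ edges sharing the basepoint of a common triangle $\{v,a,b\}$ (with $va,vb$ of weight $2$ and $ab$ of weight $1$) are conjugate via $t_at_b$: using $(t_at_b)^3=1$ one computes $(t_at_b)(t_at_bt_vt_b)(t_at_b)^{-1}=t_bt_at_vt_a$. Together with the weight-$1$ rolls these connect all admissible anchorings of $C$.

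Combining the moves shows that the involution pairs of all admissible choices for $C$ form a single simultaneous conjugacy class in $G$, so that all (R3) relations attached to $C$ are equivalent modulo (R1),(R2); in particular the relation depends neither on $i_0$ nor on the direction. I expect the whole derivation to use only (R1) and the weight-$1$ instances of (R2), the weight-$2$ relations never being needed. The delicate step is the bookkeeping in the weight-$2$ case: one must verify that the admissible anchorings are genuinely connected by the two moves above together with the weight-$1$ rolls for \emph{every} shape of chordless cycle that actually occurs. This is transparent for the triangles of type $B_n$, where the two double edges always meet in the distinguished vertex; for the finitely many chordless cycles carrying double edges in type $F_4$ one should either check the connectivity by hand or confirm the equivalence by a direct computation, parallel to the computational verification already invoked in Proposition~\ref{prop:Main2Admissible}.
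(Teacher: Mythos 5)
Your reduction of each (R3) relation to the commutation of the involution pair $p=t_{i_0}$, $q=Pt_{i_{d-1}}P^{-1}$, and the observation that simultaneous conjugation and inversion of relators preserve equivalence modulo (R1) and (R2), are correct, as are all three of your explicit moves: the palindrome identity over a weight-$1$ path, the roll by $t_{i_0}t_{i_1}$ across a vertex both of whose cycle edges have weight $1$, and the two weight-$2$ facts (both computations check out). For comparison, the paper does not prove this Proposition at all: it imports it from Barot--Marsh and merely illustrates the basepoint shift and the reversal on an all-weight-$1$ four-cycle. So where your argument applies it does strictly more than the text: it settles all-weight-$1$ chordless cycles of any length and the $(2,2,1)$-triangles, and these cover type $B_n$ entirely (every weight-$2$ chordless cycle there is a triangle through the distinguished vertex) as well as the three non-admissible type $F_4$ diagrams, whose weight-$2$ cycles are also $(2,2,1)$-triangles.

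The gap is the one remaining weight-$2$ shape, and it cannot be waved away: Carter's admissible diagram $F_4(a_1)$, which is a Carter diagram of type $F_4$ and hence in the scope of the Proposition, is a chordless $4$-cycle $i_0 - i_1 - i_2 - i_3 - i_0$ with $w_{i_0i_1}=w_{i_2i_3}=1$ and $w_{i_1i_2}=w_{i_3i_0}=2$, so its two double edges are \emph{opposite}, not adjacent. There every vertex of the cycle meets one single and one double edge, so no weight-$1$ roll is available; the two double edges share no vertex, so your Fact 2 (which needs the triangle $\{v,a,b\}$) does not apply; and Fact 1 only identifies the two anchorings closing the \emph{same} double edge. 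Hence your moves partition the four admissible relations into two classes --- those closing $\{i_3,i_0\}$ and those closing $\{i_1,i_2\}$ --- and the connectivity you assert genuinely fails for exactly the shape that needs a new idea; deferring it to an unspecified hand or computer check leaves that case unproved. The fix stays inside your framework: conjugating the two commutation conditions and using only $[t_{i_0},t_{i_2}]=[t_{i_1},t_{i_3}]=1$ together with the braid relations on the two single edges, one checks that the relation closing $\{i_3,i_0\}$ and the relation closing $\{i_1,i_2\}$ are each equivalent to the single symmetric condition $[\,t_{i_2}t_{i_1}t_{i_2},\,t_{i_0}t_{i_3}t_{i_0}\,]=1$, which restores the equivalence (and, consistent with your expectation, never uses the weight-$2$ relations).
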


Let us make this statement more precise by considering the following example.
\begin{example}
Consider the following cycle.
\begin{figure}[H]
\centering
\begin{tikzpicture}[scale=0.75, thick,>=latex]
%

  \node (21) at (3,0) [circle, draw, inner sep=0pt, minimum width=4pt]{};
  \node (a21) at (3,0.35) []{$1$};
  \node (31) at (3,-2) [circle, draw, inner sep=0pt, minimum width=4pt]{};
  \node (a31) at (3,-2.35) []{$3$};
  \node (41) at (2,-1) [circle, draw, inner sep=0pt, minimum width=4pt]{};
  \node (a41) at (1.65,-1) []{$4$};
  \node (51) at (4,-1) [circle, draw, inner sep=0pt, minimum width=4pt]{};
  \node (a51) at (4.35,-1) []{$2$};

  \draw[-] (21) to (41);
  \draw[-] (31) to (41);
  
  \draw[-] (21) to (51);
  \draw[-] (31) to (51);

%
\end{tikzpicture}
\end{figure}
Attach to it the relations (R1) and (R2) and the relation 
$$
(t_1t_2t_3t_4t_3t_2)^2=1
$$
of type (R3) attached to the following numbering and direction of the cycle:
$$
1 \edge 2 \edge 3 \edge 4 \edge 1.
$$
Using realtions (R1) and (R2), we obtain:
\begin{align*}
t_1t_2t_3t_4t_3t_2 & = t_2t_3t_4t_3t_2t_1\\
\Leftrightarrow ~ t_1t_2t_4t_3t_4t_2 & = t_2t_4t_3t_4t_2t_1\\
\Leftrightarrow ~ t_1t_4t_2t_3t_2t_4 & = t_2t_4t_3t_4t_2t_1\\
\Leftrightarrow ~ t_1t_4t_3t_2t_3t_4 & = t_2t_4t_3t_4t_2t_1.
\end{align*}
Conjugation by $t_4$ and the relation $t_2t_4 = t_4t_2$ yield 
\begin{align*}
t_4t_1t_4t_3t_2t_3  = t_2t_3t_2t_4t_1t_4.
\end{align*}
Conjugation by $t_3$ and the relation $t_3t_2t_3t_2 = t_2t_3$ yield 
\begin{align*}
t_3t_4t_1t_4t_3t_2  = t_2t_3t_4t_1t_4t_3.
\end{align*}
Finally, conjuagation by $t_2$ yields
\begin{align*}
t_2t_3t_4t_1t_4t_3  = t_3t_4t_1t_4t_3t_2,
\end{align*}
that is, the relation (R3) attached to the following numbering and direction of the cycle:
$$
2 \edge 3 \edge 4 \edge 1 \edge 2.
$$
Similarly, starting with same relation of type (R3) as above and using the relations of types (R1) and (R2), we obtain:
\begin{align*}
t_1t_2t_3t_4t_3t_2 & = t_2t_3t_4t_3t_2t_1\\
\Leftrightarrow ~ t_1t_2t_4t_3t_4t_2 & = t_2t_4t_3t_4t_2t_1\\
\Leftrightarrow ~ t_1t_4t_2t_3t_2t_4 & = t_4t_2t_3t_2t_4t_1\\
\Leftrightarrow ~ t_1t_4t_3t_2t_3t_4 & = t_4t_3t_2t_3t_4t_1,
\end{align*}
that is, the relation (R3) attached to the following numbering and direction of the cycle:
$$
1 \edge 4 \edge 3 \edge 2 \edge 1.
$$
\end{example}

\medskip
The Hurwitz action will play an important role in our following arguments. Let us therefore give two examples which illustrate this fact.

\begin{example}
Consider a Coxeter system $(W,S)$ of type $D_6$ and let $(t_1,\ldots, t_6)$ be a tuple of reflections such that the corresponding Carter diagram is given by the left diagram in Figure \ref{fig:D6}. Note that this is in fact a Carter diagram of type $D_6$ by Theorem \ref{thm:CarterTypeD}. We apply the Hurwitz move 
$$
(t_1, \ldots , t_6) ~\stackrel{\sigma_3}{\sim} ~ (t_1,t_2,t_3t_4t_3,t_3,t_5,t_6)=:(r_1, \ldots , r_6).
$$
The resulting diagram is given by the diagram on the right side in Figure \ref{fig:D6}.
\begin{figure}[H]
\centering
\begin{tikzpicture}[scale=1.05, thick,>=latex]
    \node (A1) at (-3,1) [circle, draw, inner sep=0pt, minimum width=4pt]{};
    \node (A11) at (-3,1.3) [] {$t_1$};
    
    \node (B1) at (-3,0) [circle, draw, inner sep=0pt, minimum width=4pt]{};
    \node (B11) at (-3,-0.3) [] {$t_2$};
    
    \node (C1) at (-2,1) [circle, draw, inner sep=0pt, minimum width=4pt]{};
    \node (C11) at (-2,1.3) [] {$t_3$};
    
    \node (D1) at (-2,0) [circle, draw, inner sep=0pt, minimum width=4pt]{};
    \node (D11) at (-2,-0.3) [] {$t_4$};
    
    \node (E1) at (-1,1) [circle, draw, inner sep=0pt, minimum width=4pt]{};
    \node (E11) at (-1,1.3) [] {$t_5$};
    
    \node (F1) at (-1,0) [circle, draw, inner sep=0pt, minimum width=4pt]{};
    \node (F11) at (-1,-0.3) [] {$t_6$};

    \node (1) at (-0.5,0.5) []{};
    \node (2) at (0.5,0.5) []{};
    \node (3) at (0,0.65) []{$\sigma_3$};

    \node (A2) at (3,1) [circle, draw, inner sep=0pt, minimum width=4pt]{};
    \node (A22) at (3,1.3) [] {$r_5$};
    
    \node (B2) at (3,0) [circle, draw, inner sep=0pt, minimum width=4pt]{};
    \node (B22) at (3,-0.3) [] {$r_6$};
    
    \node (C2) at (2,1) [circle, draw, inner sep=0pt, minimum width=4pt]{};
    \node (C22) at (2,1.3) [] {$r_3$};
    
    \node (D2) at (2,0) [circle, draw, inner sep=0pt, minimum width=4pt]{};
    \node (D22) at (2,-0.3) [] {$r_4$};
    
    \node (E2) at (1,1) [circle, draw, inner sep=0pt, minimum width=4pt]{};
    \node (E22) at (1,1.3) [] {$r_1$};
    
    \node (F2) at (1,0) [circle, draw, inner sep=0pt, minimum width=4pt]{};
    \node (F22) at (1,-0.3) [] {$r_2$};

    \draw[-] (A1) to (B1);
    \draw[-] (A1) to (C1);
    \draw[-] (B1) to (C1);
    \draw[-] (C1) to (D1);
    \draw[-] (C1) to (E1);
    \draw[-] (D1) to (F1);   
    \draw[-] (E1) to (F1);

    \draw[-] (A2) to (B2);
    \draw[-] (A2) to (C2);
    \draw[-] (B2) to (D2);
    \draw[-] (C2) to (D2);
    \draw[-] (C2) to (E2);
    \draw[-] (C2) to (F2);
    \draw[-] (D2) to (E2);
    \draw[-] (C2) to (B2);
    \draw[-] (D2) to (F2);
    \draw[-] (E2) to (F2);
    
    \draw[->] (1) to (2);

\end{tikzpicture}
\caption{Hurwitz move applied to a Carter diagram} \label{fig:D6}
\end{figure}
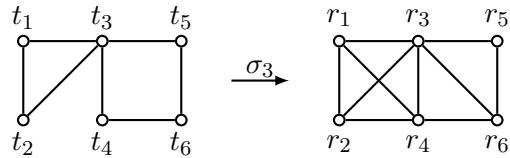
Note that the diagram on the left side is cyclically orientable, while the diagram on the right side is not. Therefore the arguments of Barot--Marsh just yield a presentation attached to the diagram on the left.

Let $\langle t_1, \ldots , t_6 \mid R \rangle$ (resp. $\langle r_1, \ldots , r_6 \mid R' \rangle$) be the presentation attached to the diagram on the left side (resp. to the diagram on the right side) as described in Section \ref{sec:intro} and consider the map

\begin{align*}
\varphi: \langle t_1, \ldots , t_6 \mid R \rangle & \rightarrow \langle r_1, \ldots , r_6 \mid R' \rangle \\
t_j & \mapsto \begin{cases} 
r_j &\mbox{if } j \neq 3,4 \\
r_4 &\mbox{if } j = 3 \\
r_4r_3r_4 &\mbox{if } j=4.
\end{cases}
\end{align*}
The relations $R$ are preserved under the map $\varphi$. We exhibit some examples:
\begin{itemize}
\item We have the relation $(t_2t_4)^2=1$ in $R$, but also 
$$
(\varphi(t_2) \varphi(t_4))^2= (r_1r_4r_3r_4)^2 = 1,
$$
since we have the chordless cycle $r_1 \edge r_4 \edge r_3 \edge r_1$.
\item We have the relation $(t_4t_3t_5t_6t_5t_3)^2=1$ in $R$, which is induced by the cycle of length $4$. But also 
\begin{align*}
(\varphi(t_4) \varphi(t_3) \varphi(t_5) \varphi(t_6) \varphi(t_5) \varphi(t_3))^2 & = 
(r_4r_3r_4r_4r_5r_6r_5r_4)^2\\
{} & = (r_4r_3r_5r_6r_5r_4)^2\\
{} & = r_4(r_3r_5r_6r_5)^2 r_4 = r_4^2=1,
\end{align*}
where we used in the last line the relation in $R'$ given by the cycle $r_3 \edge r_5 \edge r_6 \edge r_3$. 
\end{itemize}
By \cite[Ch. 4, Proposition 3]{Joh90} we obtain that the map $\varphi$ extends to a surjective group homomorphism. Likewise we see that 
\begin{align*}
\psi: \langle r_1, \ldots , r_6 \mid R' \rangle & \rightarrow \langle t_1, \ldots , t_6 \mid R \rangle \\
r_j & \mapsto \begin{cases} 
t_j &\mbox{if } j \neq 3,4 \\
t_3t_4t_3 &\mbox{if } j =3 \\
t_3 &\mbox{if } j=4
\end{cases}
\end{align*}
extends to a surjective group homomorphism. Since $\psi(\varphi(t_j))=t_j$ and $\varphi(\psi(r_j))=r_j$ for all $j \in \{1, \ldots , 6\}$, we obtain 
$$
\langle t_1, \ldots , t_6 \mid R \rangle \cong \langle r_1, \ldots , r_6 \mid R' \rangle.
$$
\end{example}

\medskip
\begin{example} \label{ex:CarterTypeB}
We consider a root system $\Phi$ of type $B_3$ and use the description of $W_{\Phi}$ as group of signed permutations $S_3^B$ given in Section \ref{sec:CarterDiagB}. Consider 
$$
(t_1,t_2,t_3):= ((1,-1), (1,2)(-1,-2),  (1,3)(-1,-3)).
$$
This is a reduced reflection factorization of a Coxeter element. We have 
\begin{align*}
\sigma_1 ((1,-1), (1,2)(-1,-2),  (1,3)(-1,-3)) & = ((1,-2)(-1,2), (1,-1), (1,3)(-1,-3))\\
{} & =:(r_1, r_2, r_3).
\end{align*}
The effect of this Hurwitz move on the Carter diagram is as follows.
\begin{figure}[H]
\centering
\begin{tikzpicture}[scale=1.35, thick,>=latex, double equal sign distance]
   \tikzset{triple/.style={double,postaction={draw,-}}}

  \node (a1) at (2,0.5) [circle, draw, inner sep=0pt, minimum width=4pt]{};
  \node (a11) at (2,0.8) []{$t_{1}$};
  \node (a2) at (2,-0.5) [circle, draw, inner sep=0pt, minimum width=4pt]{};
  \node (a21) at (2,-0.8) []{$t_{2}$};
  \node (a3) at (3,0) [circle, draw, inner sep=0pt, minimum width=4pt]{};
  \node (a31) at (3,0.3) []{$t_3$};

  \draw[double] (a1) to (a2);
  \draw[-] (a2) to (a3);
  \draw[double] (a1) to (a3);
  
  \node (A) at (3.75,0) []{};
  \node (AB) at (4.5,0.3) []{$\sigma_1$};
  \node (B) at (5.25,0) []{};

  \draw[->] (A) to (B);

  \node (A1) at (6,0.5) [circle, draw, inner sep=0pt, minimum width=4pt]{};
  \node (A11) at (6,0.8) []{$r_{2}$};
  \node (A2) at (6,-0.5) [circle, draw, inner sep=0pt, minimum width=4pt]{};
  \node (A21) at (6,-0.8) []{$r_{1}$};
  \node (A3) at (7,0) [circle, draw, inner sep=0pt, minimum width=4pt]{};
  \node (A31) at (7,0.3) []{$r_3$};

  \draw[double] (A1) to (A2);
  \draw[double] (A1) to (A3);
  \draw[-] (A2) to (A3);

\end{tikzpicture}
\end{figure}
That is, we obtain the same diagram. This comes from the fact that the distinguished vertex $t_1= (1,-1)=r_2$ has not changed under the Hurwitz move $\sigma_1$. 

On the other hand, we have 
\begin{align*}
\sigma_1^{-1} ((1,-1), (1,2)(-1,-2),  (1,3)(-1,-3)) & = ((1,2)(-1,-2), (2,-2), (1,3)(-1,-3))\\
{} & =:(r_1', r_2', r_3').
\end{align*}
The effect of this Hurwitz move on the Carter diagram is different since we change the distinguished vertex from $(1,-1)$ to $(2,-2)$.
\begin{figure}[H]
\centering
\begin{tikzpicture}[scale=1.35, thick,>=latex, double equal sign distance]
   \tikzset{triple/.style={double,postaction={draw,-}}}

  \node (a1) at (2,0.5) [circle, draw, inner sep=0pt, minimum width=4pt]{};
  \node (a11) at (2,0.8) []{$t_{1}$};
  \node (a2) at (2,-0.5) [circle, draw, inner sep=0pt, minimum width=4pt]{};
  \node (a21) at (2,-0.8) []{$t_{2}$};
  \node (a3) at (3,0) [circle, draw, inner sep=0pt, minimum width=4pt]{};
  \node (a31) at (3,0.3) []{$t_3$};

  \draw[double] (a1) to (a2);
  \draw[-] (a2) to (a3);
  \draw[double] (a1) to (a3);
  
  \node (A) at (3.75,0) []{};
  \node (AB) at (4.5,0.3) []{$\sigma_1^{-1}$};
  \node (B) at (5.25,0) []{};

  \draw[->] (A) to (B);

  \node (A1) at (6,0.5) [circle, draw, inner sep=0pt, minimum width=4pt]{};
  \node (A11) at (6,0.8) []{$r_{2}'$};
  \node (A2) at (6,-0.5) [circle, draw, inner sep=0pt, minimum width=4pt]{};
  \node (A21) at (6,-0.8) []{$r_{1}'$};
  \node (A3) at (7,0) [circle, draw, inner sep=0pt, minimum width=4pt]{};
  \node (A31) at (7,0.3) []{$r_3'$};

  \draw[double] (A1) to (A2);
  \draw[-] (A2) to (A3);

\end{tikzpicture}
\end{figure}
\end{example}

\medskip
$\phantom{4}$

Let $W_{\Phi}$ be a Weyl group, $w \in W_{\Phi}$ and $(t_1, \ldots , t_m) \in \Red_T(w)$ with Carter diagram $\Gamma$. For every braid $\sigma \in \mathcal{B}_m$, the Hurwitz action yields a new reduced reflection factorization $\sigma(t_1, \ldots , t_m) \in \Red_T(w)$ with Carter diagram $\Gamma'$. We put $\sigma(\Gamma):=\Gamma'$ 

\begin{remark} \label{rem:RelationsSimplyLaced}
Let $\Phi$ be a simply-laced root system (that is, of type $A_n, D_n, E_6, E_7$ or $E_8$). Let $w \in W_{\Phi}$ and $(t_1, \ldots , t_m) \in \Red_T(w)$ with associated Carter diagram $\Gamma$. We can describe the impact of an elementary Hurwitz move on $\Gamma$, that is, how to obtain $\sigma_i(\Gamma)$ from $\Gamma$ as follows:

Let $\sigma_i(t_1, \ldots , t_m) = (r_1, \ldots , r_m)$. Hence we have $t_j = r_j$ for $j \in [m] \setminus \{i, i+1 \}$, $r_i = t_i t_{i+1} t_i$ and $r_{i+1}= t_i$. If $(t_i t_{i+1})^2=1$, then $\Gamma$ and $\sigma_i(\Gamma)$ are identical. Therefore let us assume that $(t_i t_{i+1})^2 \neq 1$. The diagram $\sigma_i(\Gamma)$ is obtained as follows:
\begin{itemize}
\item The vertices of $\sigma_i(\Gamma)$ correspond to $r_1, \ldots , r_m$.
\item For $j,k \in [m] \setminus \{i, i+1 \}$, there is an edge between $r_j$ and $r_k$ if and only if there is an edge between $t_j$ and $t_k$.
\item For $j \in [m] \setminus \{i, i+1 \}$, there is an edge between $r_j$ and $r_{i+1}$ if and only if there is an edge between $t_j$ and $t_i$.
\item There is an edge between $r_i$ and $r_{i+1}$.
\item For $j \in [m] \setminus \{i, i+1 \}$, there is an edge between $r_j$ and $r_i$ if and only if either $t_j$ and $t_i$ or $t_j$ and $t_{i+1}$ (but not both) are connected by an edge in $\Gamma$. 
\end{itemize}
\end{remark}

\begin{proof}
Most of the assertions can be checked directly. Let us assume that $(t_i t_{i+1})^2 \neq 1$. We only show that if both $t_j$ and $t_i$ as well as $t_j$ and $t_{i+1}$ are connected by an edge, then $r_j$ and $r_i$ are not connected by an edge. Since $\Phi$ is simply-laced, we assume that $(\beta \mid \beta)=2$ for all $\beta \in \Phi$. Put $t_k = s_{\beta_k}$ with $\beta_k \in \Phi$ ($k \in \{i, i+1,j\}$). The case 
$$
(\beta_i \mid \beta_j) = (\beta_{i+1} \mid \beta_j) = ( \beta_i \mid \beta_{i+1}) = -1
$$
can not occur. Otherwise we would have $s_{\beta_i}(\beta_{i+1}) = \beta_i + \beta_{i+1} \in \Phi$ and 
$$
(\beta_j \mid \beta_i + \beta_{i+1}) = (\beta_j \mid \beta_i) + (\beta_j \mid \beta_{i+1}) = -2,
$$
which is not possible. If we have 
$$
(\beta_i \mid \beta_j) = (\beta_{i+1} \mid \beta_j) = ( \beta_i \mid \beta_{i+1}) = 1,
$$
then $s_{\beta_i}(\beta_{i+1})= \beta_{i+1} - \beta_i$ and 
$$
(\beta_j \mid \beta_i - \beta_{i+1}) = (\beta_j \mid \beta_i) - (\beta_j \mid \beta_{i+1}) = 0.
$$
In particular, $s_{\beta_j}$ and $s_{\beta_i}s_{\beta_{i+1}}s_{\beta_i}$ commute as desired. Therefore let us assume that 
$$
(\beta_i \mid \beta_j) = 1, ~(\beta_{i+1} \mid \beta_j) = ( \beta_i \mid \beta_{i+1}) = -1.
$$
We have $s_{\beta_i}(\beta_{i+1})= \beta_{i+1} + \beta_i$ and 
$$
(\beta_j \mid \beta_i + \beta_{i+1}) = (\beta_j \mid \beta_i) + (\beta_j \mid \beta_{i+1}) = 0.
$$
Again, $s_{\beta_j}$ and $s_{\beta_i}s_{\beta_{i+1}}s_{\beta_i}$ commute as desired. If 
$$
(\beta_i \mid \beta_j) =(\beta_{i+1} \mid \beta_j)=1,~ ( \beta_i \mid \beta_{i+1}) = -1
$$
We still have $s_{\beta_i}(\beta_{i+1})= \beta_{i+1} + \beta_i \in \Phi$, but 
$$
(\beta_j \mid \beta_i + \beta_{i+1}) = (\beta_j \mid \beta_i) + (\beta_j \mid \beta_{i+1}) = 2.
$$
This is not possible since $\beta_j \neq \beta_i + \beta_{i+1}$ by Carter's Lemma \ref{thm:CartersLemma}. All other cases are analogous.
\end{proof}

\medskip
In the $B_n$-case, the Hurwitz action on Carter diagrams might behave different, see also Example \ref{ex:CarterTypeB}.

\begin{remark} \label{rem:RelationsNonSimplyLaced}
Let $\Phi$ be a root system of type $B_n$. Let $w \in W_{\Phi}$ and $(t_1, \ldots , t_m) \in \Red_T(w)$ with associated Carter diagram $\Gamma$. As in Remark \ref{rem:RelationsSimplyLaced} we describe how to obtain $\sigma_i(\Gamma)$ from $\Gamma$:

Let $\sigma_i(t_1, \ldots , t_m) = (r_1, \ldots , r_m)$. Hence we have $t_j = r_j$ for $j \in [m] \setminus \{i, i+1 \}$, $r_i = t_i t_{i+1} t_i$ and $r_{i+1}= t_i$. If $(t_i t_{i+1})^2=1$, then $\Gamma$ and $\sigma_i(\Gamma)$ are identical. Therefore let us assume that $(t_i t_{i+1})^2 \neq 1$. The diagram $\sigma_i(\Gamma)$ is obtained as follows:

\begin{enumerate} \label{rem:HurwitzCarterTypeB}
\item If $t_i$ does not correspond to the distinguished vertex of $\Gamma$, then the diagram $\sigma_i(\Gamma)$ is obtained from $\Gamma$ as decribed in Remark \ref{rem:RelationsSimplyLaced}. We just have to take care of the labeling. If $t_{i+1}$ is the distinguished vertex in $\Gamma$, then $r_i$ is the distinguished vertex in $\sigma_i(\Gamma)$. If $t_{j}$ is the distinguished vertex in $\Gamma$ for some $j \in [n] \setminus \{i, i+1 \}$, then $r_j$ is the distinguished vertex in $\sigma_i(\Gamma)$.
\item If $t_i$ is the distinguished vertex, then we obtain $\sigma_i(\Gamma)$ as follows:
\begin{itemize}
\item The vertices of $\sigma_i(\Gamma)$ correspond to $r_1, \ldots , r_m$.
\item For $j,k \in [m] \setminus \{i, i+1 \}$, there is an edge between $r_j$ and $r_k$ if and only if there is an edge between $t_j$ and $t_k$.
\item For $j \in [m] \setminus \{i, i+1 \}$, there is an edge between $r_j$ and $r_{i+1}$ if and only if there is an edge between $t_j$ and $t_i$.
\item There is an edge between $r_i$ and $r_{i+1}$.
\item For $j \in [m] \setminus \{i, i+1 \}$, there is an edge between $r_j$ and $r_i$ if and only if there is an edge between $t_j$ and $t_{i+1}$: Since $t_i$ is the distinguished vertex, we have 
$$
\supp(t_{i+1})= \supp(t_i t_{i+1} t_i)= \supp(r_i),
$$
and since $(t_1, \ldots , t_m)$ is reduced, we have $t_j \neq t_{i+1}$ for all $j \neq i+1$. 
\item The distinguished vertex of $\sigma_i(\Gamma)$ is $r_{i+1}$.
\end{itemize}
In particular, we see that $\Gamma$ and $\sigma_i(\Gamma)$ are isomorphic.
\end{enumerate}
\end{remark}

As a direct consequence of this remark we obtain:
\begin{Proposition} \label{prop:CaseDistVertex}
If $\Phi$ is of type $B_n$ and $t_i$ is the distinguished vertex of $\Gamma$, then $ W(\Gamma) \cong W(\sigma_i(\Gamma))$.
\end{Proposition}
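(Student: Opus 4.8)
The plan is to deduce the isomorphism directly from the final observation of Remark~\ref{rem:RelationsNonSimplyLaced}, namely that when $t_i$ is the distinguished vertex the diagrams $\Gamma$ and $\sigma_i(\Gamma)$ are isomorphic as weighted graphs. The key conceptual point is that $W(\Gamma)$ is an \emph{invariant of the weighted graph} $\Gamma$: its generators are indexed by the vertices, the relations (R1) and (R2) are determined by the vertices together with the edge-weights $w_{ij}$, and the relations (R3) are determined by the chordless cycles of $\Gamma$ together with their weight patterns. Consequently, any weight-preserving graph isomorphism $\phi\colon \Gamma \to \sigma_i(\Gamma)$ carries the defining relations of $W(\Gamma)$ bijectively to those of $W(\sigma_i(\Gamma))$, and hence extends to a group isomorphism via $t_v \mapsto t_{\phi(v)}$ (with inverse induced by $\phi^{-1}$).

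It therefore remains to make the weighted isomorphism explicit and to verify that it preserves weights. Writing $\sigma_i(t_1,\ldots,t_m)=(r_1,\ldots,r_m)$, I would define $\phi$ on vertices by $t_j \mapsto r_j$ for $j \notin \{i,i+1\}$, $t_i \mapsto r_{i+1}$ and $t_{i+1}\mapsto r_i$. That $\phi$ preserves adjacency is read off directly from the bullet points of Remark~\ref{rem:RelationsNonSimplyLaced}(2): an edge $t_i \edge t_j$ (for $j \neq i+1$) corresponds to $r_{i+1}\edge r_j$, an edge $t_{i+1}\edge t_j$ corresponds to $r_i \edge r_j$, and the edge $t_i \edge t_{i+1}$, which is present because $(t_it_{i+1})^2\neq 1$, corresponds to $r_i \edge r_{i+1}$; the edges among the remaining vertices are left unchanged.

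The only point requiring care is that $\phi$ preserves edge-weights, and here I would invoke the classification of type $B_n$ Carter diagrams (Theorem~\ref{thm:CarterTypeB}): in such a diagram the weight-$2$ edges are exactly those incident with the distinguished vertex, while every other edge has weight $1$. Since Remark~\ref{rem:RelationsNonSimplyLaced}(2) identifies $t_i$ as the distinguished vertex of $\Gamma$ and $r_{i+1}$ as the distinguished vertex of $\sigma_i(\Gamma)$, and since $\phi(t_i)=r_{i+1}$, the bijection $\phi$ sends edges incident with the distinguished vertex to edges incident with the distinguished vertex, hence weight-$2$ edges to weight-$2$ edges and weight-$1$ edges to weight-$1$ edges. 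Thus $\phi$ is a weight-preserving isomorphism, and by the first paragraph it induces $W(\Gamma)\cong W(\sigma_i(\Gamma))$. I expect no genuine obstacle beyond this bookkeeping: the entire content is the weighted-graph isomorphism already supplied by the remark, combined with the structural fact that $W(-)$ depends only on the weighted graph together with its chordless-cycle data.
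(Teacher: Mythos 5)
Your proposal is correct and takes essentially the same route as the paper: the paper derives Proposition \ref{prop:CaseDistVertex} as a direct consequence of Remark \ref{rem:RelationsNonSimplyLaced}(2), whose bullet points exhibit precisely the vertex bijection $t_j \mapsto r_j$ ($j \neq i,i+1$), $t_i \mapsto r_{i+1}$, $t_{i+1} \mapsto r_i$ as a diagram isomorphism matching distinguished vertices, whence $W(\Gamma) \cong W(\sigma_i(\Gamma))$ because $W(-)$ depends only on the weighted diagram. Your explicit weight-preservation check via Theorem \ref{thm:CarterTypeB} merely spells out what the paper leaves implicit in the phrase ``In particular, we see that $\Gamma$ and $\sigma_i(\Gamma)$ are isomorphic.''
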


\medskip
Before we begin with the proof of Theorem \ref{thm:Main2}, let us emphasize again that the simply-laced case is already covered by the work of Cameron, Seidel and Tsaranov \cite{CST94}. Therefore we just sketch how one may proceed in this case. The case that the Carter diagram is cyclically orientable is covered by the work of Barot and Marsh \cite{BM15}. Our proof will fill in the missing gaps and will prove the assertion in full generality for all Carter diagrams.
\begin{Lemma} \label{lem:SurjHomB1}
Let $\Gamma$ be a Carter diagram of simply-laced type or type $B_n$ with vertex set corresponding to $t_1, \ldots , t_m$ and $i \in [m-1]$. If $(t_i t_{i+1})^2 \neq 1$ (and, in the $B_n$-case, if $t_i$ is not the distinguished vertex of $\Gamma$), then the map 
$$
\varphi: W(\Gamma) \rightarrow W(\sigma_i(\Gamma)), ~ t_j \mapsto \begin{cases} 
r_j &\mbox{if } j \neq i, i+1 \\
r_{i+1} &\mbox{if } j=i\\
r_{i+1} r_{i} r_{i+1} &\mbox{if } j=i+1
\end{cases}
$$
extends to a (surjective) group homomorphism.
\end{Lemma}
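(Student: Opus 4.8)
The plan is to invoke von Dyck's theorem \cite[Ch.~4, Proposition~3]{Joh90}: since $W(\Gamma)$ is presented by the relations (R1)--(R3), it suffices to show that the image under $\varphi$ of each such relation holds in $W(\sigma_i(\Gamma))$. Throughout I would read off the edges (and, in the $B_n$ case, the weight-$2$ edges) of $\sigma_i(\Gamma)$ from Remark \ref{rem:RelationsSimplyLaced} in the simply-laced case and from Remark \ref{rem:RelationsNonSimplyLaced} in the $B_n$ case; the hypothesis that $t_i$ is not the distinguished vertex places us exactly in case (1) of the latter, so the two descriptions run in parallel. Once $\varphi$ is known to be a homomorphism, surjectivity is immediate: $r_{i+1}=\varphi(t_i)$, $r_i=\varphi(t_i)\varphi(t_{i+1})\varphi(t_i)=\varphi(t_it_{i+1}t_i)$, and $r_j=\varphi(t_j)$ for $j\neq i,i+1$, so every generator of $W(\sigma_i(\Gamma))$ lies in the image.

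The relations (R1) are trivial: $\varphi(t_j)^2=r_j^2=1$ for $j\neq i+1$, and $\varphi(t_{i+1})^2=(r_{i+1}r_ir_{i+1})^2=1$ using $r_i^2=r_{i+1}^2=1$. For (R2), $(t_jt_k)^{m_{jk}}=1$, I would run through the position of $\{j,k\}$ relative to $\{i,i+1\}$. When $j,k\notin\{i,i+1\}$, when $\{j,k\}=\{i,i+1\}$ (so $\varphi(t_i)\varphi(t_{i+1})=r_ir_{i+1}$), or when exactly one of them equals $i$, the image is a product of two generators whose weight in $\sigma_i(\Gamma)$ equals $m_{jk}-2$ by the Remark, so the image is directly an (R2) relation of $W(\sigma_i(\Gamma))$. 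The one substantial case is $j=i+1$, $k\notin\{i,i+1\}$, where $\varphi(t_{i+1})\varphi(t_k)=r_{i+1}r_ir_{i+1}r_k$; here I split according to which of the edges $t_k$--$t_i$, $t_k$--$t_{i+1}$ are present in $\Gamma$. If $t_k$ is joined to neither, then $r_k$ commutes with $r_i$ and $r_{i+1}$ and the relation is trivial. If $t_k$ is joined to $t_i$ but not to $t_{i+1}$ (so $m_{i+1,k}=2$), then $r_i,r_{i+1},r_k$ span a chordless triangle in $\sigma_i(\Gamma)$; using the braid relation $r_ir_{i+1}r_i=r_{i+1}r_ir_{i+1}$ together with the (R3) relation of this triangle, which by \cite[Lemma~4.1, Proposition~4.6]{BM15} may be written $(r_kr_{i+1}r_ir_{i+1})^2=1$, one obtains exactly that $r_{i+1}r_ir_{i+1}$ commutes with $r_k$. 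In the remaining cases ($t_k$ joined to $t_{i+1}$ only, or to both, so $m_{i+1,k}=3$) the vertices $r_i,r_{i+1},r_k$ span a path, and $(r_{i+1}r_ir_{i+1}r_k)^3=1$ follows from the rank-$3$ Coxeter relations of type $A_3$ among these three generators by a direct computation. In type $B_n$ the weight-$2$ edges are tracked through the distinguished vertex as in Remark \ref{rem:RelationsNonSimplyLaced}(1), and the second clause of (R3) (``$w_{i_{d-1}i_0}=2$'') supplies the analogous rank-$3$ identity when one edge has weight $2$.

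The main obstacle is the verification of the relations (R3): for each chordless cycle $C$ of $\Gamma$ I must show that the image of its associated relation holds in $W(\sigma_i(\Gamma))$, the difficulty being that $\sigma_i$ need not preserve chordless cycles through $t_i$ or $t_{i+1}$. My plan is to classify $C$ by its intersection with $\{t_i,t_{i+1}\}$. If $C$ meets neither, then (by the Remark) the induced subgraph on the vertices of $C$ is unchanged, so $C$ remains a chordless cycle of $\sigma_i(\Gamma)$ and $\varphi$ carries its relator to the identical relator. If $C$ meets $t_i$ only, the same holds after relabelling $t_i\mapsto r_{i+1}$. The genuinely new cases are when $C$ meets $t_{i+1}$ (where the substitution $t_{i+1}\mapsto r_{i+1}r_ir_{i+1}$ lengthens the word) and when $C$ meets both; in the latter case a key simplification is that, since $t_i$ and $t_{i+1}$ are joined by an edge, chordlessness forces them to be \emph{consecutive} in $C$, so that after the Barot--Marsh normalisation \cite[Lemma~4.1, Proposition~4.6]{BM15} one may base the cycle so that the consecutive factor $t_it_{i+1}$ telescopes to $r_ir_{i+1}$. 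I expect that after substitution and cancellation via (R1)--(R2) the image word simplifies to the (R3) word of the transformed cycle of $\sigma_i(\Gamma)$, with the few places where the cycle shortens or loses/gains a chord being controlled precisely by the rank-$3$ identities already established for (R2). The technical heart of the argument is this bookkeeping of which cycle of $\Gamma$ maps to which cycle of $\sigma_i(\Gamma)$, and the confirmation that chordlessness is preserved wherever an (R3) relation of the target is invoked.
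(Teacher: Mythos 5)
Your proposal is correct and takes essentially the same route as the paper's proof: both rest on the substitution test of \cite[Ch.~4, Proposition~3]{Joh90}, read off the edges (and, in type $B_n$, the distinguished vertex) of $\sigma_i(\Gamma)$ from Remarks \ref{rem:RelationsSimplyLaced} and \ref{rem:RelationsNonSimplyLaced}, and dispose of the nontrivial relations exactly as the paper does --- via the braid relation, the (R3) relation of the transformed triangle or shortened cycle, and the telescoping $\varphi(t_i)\varphi(t_{i+1})=r_ir_{i+1}$ for chordless cycles containing both $t_i$ and $t_{i+1}$, which chordlessness forces to be consecutive. The only difference is organizational (you sort by relation type, the paper by local constellations of $t_i,t_{i+1}$), not mathematical.
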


\medskip

\begin{proof}
We argue with the ``substitution test'' \cite[Ch. 4, Proposition 3]{Joh90}. We therefore investigate all possible constellations in $\Gamma$ and how these might change under the action of $\sigma_i$. We show that the relations in $W(\Gamma)$ are preserved under the map $\varphi$. We denote by $W(\sigma_i(\Gamma))$ the group with generators $r_1, \ldots r_m$ subject to the relations induced by the Carter diagram $\sigma_i(\Gamma)$. 

By assumption, the vertices corresponding to $t_i$ and $t_{i+1}$ will be connected by an edge in $\Gamma$. Depending on whether the weight is $2$ or not, we have $(t_i t_{i+1})^3=1$ or $(t_i t_{i+1})^4=1$. By Remark \ref{rem:RelationsSimplyLaced} and Remark \ref{rem:RelationsNonSimplyLaced} the relation $(\varphi(t_i)\varphi(t_{i+1}))^3=1$ or $(\varphi(t_i)\varphi(t_{i+1}))^4=1$ will always hold. Also by these remarks it is enough to just consider those relations which actually involve $t_i$ and $t_{i+1}$. Therefore one would have to consider all possible constellations of $t_i$ and $t_{i+1}$ in $\Gamma$. To give an idea for the proof of the simply-laced case, we carry out two possible cases.

1) The vertex $t_{i}$ is part of a cycle, while $t_{i+1}$ is not part of that cycle.

\begin{figure}[H]
\centering
\begin{tikzpicture}[scale=0.75, thick,>=latex]

  \node (a1) at (-0.5,0) []{};
  \node (a2) at (0.5,0) [circle, draw, inner sep=0pt, minimum width=4pt]{};
  \node (a21) at (0.5,0.5) []{$t_{i+1}$};
  \node (a3) at (2,0) [circle, draw, inner sep=0pt, minimum width=4pt]{};
  \node (a31) at (1.95,0.5) []{$t_{i}$};
  \node (a4) at (3,1.5) [circle, draw, inner sep=0pt, minimum width=4pt]{};
  \node (a41) at (3,1.9) []{$t_{j_k}$};
  \node (a5) at (3,-1.5) [circle, draw, inner sep=0pt, minimum width=4pt]{};
  \node (a51) at (3,-1.9) []{$t_{j_1}$};

  \draw[dotted] (a1) to (a2);
  \draw[-] (a2) to (a3);
  \draw[-] (a3) to (a4);
  \draw[-] (a3) to (a5);
  \draw[dotted, bend left=75] (a4) to (a5);

  \node (A) at (4.4,0) []{};
  \node (AB) at (5.15,0.4) []{$\sigma_i$};
  \node (B) at (5.9,0) []{};

  \draw[->] (A) to (B);

  \node (A1) at (6.5,0) []{};
  \node (A2) at (7.5,0) [circle, draw, inner sep=0pt, minimum width=4pt]{};
  \node (A21) at (7.27,0.5) []{$r_{i}$};
  \node (A3) at (9,0) [circle, draw, inner sep=0pt, minimum width=4pt]{};
  \node (A31) at (8.8,0.5) []{$r_{i+1}$};
  \node (A4) at (10,1.5) [circle, draw, inner sep=0pt, minimum width=4pt]{};
  \node (A41) at (10,1.9) []{$r_{j_k}$};
  \node (A5) at (10,-1.5) [circle, draw, inner sep=0pt, minimum width=4pt]{};
  \node (A51) at (10,-1.9) []{$r_{j_1}$};

  \draw[dotted] (A1) to (A2);
  \draw[-] (A2) to (A3);
  \draw[-] (A3) to (A4);
  \draw[-] (A3) to (A5);
  \draw[dotted, bend left=75] (A4) to (A5);
  \draw[-, bend left=20] (A2) to (A4);
  \draw[-, bend right=20] (A2) to (A5);

\end{tikzpicture}
\end{figure}

\begin{itemize}
\item $(t_{i+1} t_{j_1})^2=1$ $\rightarrow$ $(\varphi(t_{i+1}) \varphi(t_{j_1}))^2= (r_{i+1} r_i r_{i+1} r_{j_1})^2=1$, since $r_i, r_{i+1}$ and $r_{j_1}$ are the vertices of a $3$-cycle. Analogously we can argue for $(t_{i+1} t_{j_k})^2=1$. 
\item $(t_{i}t_{j_1})^3=1$ $\rightarrow$ $(\varphi(t_{i}) \varphi(t_{j_1}))^3= (r_{i+1} r_{j_1})^3=1$. Analogously we can argue for $(t_{i} t_{j_k})^3=1$. 
\item $(t_i t_{j_1} \cdots t_{j_{k-1}} t_{j_k} t_{j_{k-1}} \cdots t_{j_1})^2=1$. Applying $\varphi$ yields 
\begin{align*}
(r_{i+1} r_{j_1} \cdots r_{j_{k-1}} r_{j_k} r_{j_{k-1}} \cdots r_{j_1})^2=1,
\end{align*}
which is exactly the realtion of type (R3) for the ``big'' cycle in $\sigma_i(\Gamma)$. 
\end{itemize}

2) $t_i$ and $t_{i+1}$ are both vertices of a full subgraph which is an $\ell$-cycle  ($\ell \geq 4$).

\begin{figure}[H]
\centering
\begin{tikzpicture}[scale=0.75, thick,>=latex]

  \node (a1) at (0,0) [circle, draw, inner sep=0pt, minimum width=4pt]{};
  \node (a11) at (-0.45,0) []{$t_{i}$};
  \node (a2) at (0,-2) [circle, draw, inner sep=0pt, minimum width=4pt]{};
  \node (a21) at (-0.6,-2) []{$t_{i+1}$};
  \node (a3) at (1,1) [circle, draw, inner sep=0pt, minimum width=4pt]{};
  \node (a31) at (1,1.45) []{$t_{j_k}$};
  \node (a4) at (1,-3) [circle, draw, inner sep=0pt, minimum width=4pt]{};
  \node (a41) at (1,-3.5) []{$t_{j_1}$};
  \node (a5) at (3,1) [circle, draw, inner sep=0pt, minimum width=4pt]{};
  \node (a51) at (3,1.45) []{$t_{j_{k-1}}$};
  \node (a6) at (3,-3) [circle, draw, inner sep=0pt, minimum width=4pt]{};
  \node (a61) at (3,-3.5) []{$t_{j_2}$};

  \draw[-] (a1) to (a2);
  \draw[-] (a2) to (a4);
  \draw[-] (a4) to (a6);
  \draw[-] (a1) to (a3);
  \draw[-] (a3) to (a5);
  \draw[dotted, bend right=75] (a6) to (a5);
  
  \node (A) at (4.75,-1) []{};
  \node (AB) at (5.5,-0.6) []{$\sigma_i$};
  \node (B) at (6.25,-1) []{};

  \draw[->] (A) to (B);

  \node (A1) at (7.5,0) [circle, draw, inner sep=0pt, minimum width=4pt]{};
  \node (A11) at (6.9,0) []{$r_{i+1}$};
  \node (A2) at (7.5,-2) [circle, draw, inner sep=0pt, minimum width=4pt]{};
  \node (A21) at (7.05,-2) []{$r_{i}$};
  \node (A3) at (8.5,1) [circle, draw, inner sep=0pt, minimum width=4pt]{};
  \node (A31) at (8.5,1.45) []{$r_{j_k}$};
  \node (A4) at (8.5,-3) [circle, draw, inner sep=0pt, minimum width=4pt]{};
  \node (A41) at (8.5,-3.5) []{$r_{j_1}$};
  \node (A5) at (10.5,1) [circle, draw, inner sep=0pt, minimum width=4pt]{};
  \node (A51) at (10.5,1.45) []{$r_{j_{k-1}}$};
  \node (A6) at (10.5,-3) [circle, draw, inner sep=0pt, minimum width=4pt]{};
  \node (A61) at (10.5,-3.5) []{$r_{j_2}$};

  \draw[-] (A1) to (A2);
  \draw[-] (A2) to (A4);
  \draw[-] (A2) to (A3);
  \draw[-] (A4) to (A6);
  \draw[-] (A1) to (A3);
  \draw[-] (A3) to (A5);
  \draw[dotted, bend right=75] (A6) to (A5);

\end{tikzpicture}
\end{figure}

\begin{itemize}
\item $(t_{i} t_{j_k})^3=1$ $\rightarrow$ $(\varphi(t_{i}) \varphi(t_{j_k}))^3= (r_{i+1}  r_{j_k})^3=1$, since $r_{i+1}$ and $r_{j_k}$ are connected by an edge. 
\item $(t_{i}t_{j_1})^2=1$ $\rightarrow$ $(\varphi(t_{i}) \varphi(t_{j_1}))^2= (r_{i+1} r_{j_1})^2=1$, since $r_{i+1}$ and $r_{j_1}$ are not connected by an edge. 
\item $(t_{i+1} t_{j_k})^2=1$ $\rightarrow$ $(\varphi(t_{i+1}) \varphi(t_{j_k}))^2=( r_{i+1} r_i r_{i+1} r_{j_k})^2=1$, since the vertices are part of a $3$-cycle.
\item $(t_{i+1} t_{j_1})^3=1$ $\rightarrow$ $(\varphi(t_{i+1}) \varphi(t_{j_1}))^3 =  (r_{i+1} r_i r_{i+1} r_{j_1})^3 =(r_{i+1} r_i r_{j_1} r_{i+1})^3 =$ \linebreak $r_{i+1} (r_i r_{j_1})^3 r_{i+1} =1$. 
\item $(t_i t_{i+1} t_{j_1} \cdots t_{j_{k-1}} t_{j_k} t_{j_{k-1}} \cdots t_{j_1} t_{i+1})^2=1$. Applying $\varphi$ yields 
\begin{align*}
\phantom{r_{i+1}r_i}(r_{i+1} r_{i+1} r_i r_{i+1} r_{j_1} \cdots r_{j_{k-1}} r_{j_k} r_{j_{k-1}} \cdots r_{j_1}  r_{i+1} r_i r_{i+1})^2 = & (r_i r_{i+1} r_{j_1} \cdots r_{j_{k-1}} r_{j_k} r_{j_{k-1}} \cdots r_{j_1} r_i r_{i+1} r_i)^2\\
{} = & r_i r_{i+1} (r_{j_1} \cdots r_{j_{k-1}} r_{j_k} r_{j_{k-1}} \cdots r_{j_1} r_i )^2 r_{i+1} r_i\\
{} = & r_i r_{i+1} r_{i+1} r_i =1,
\end{align*}
since $(r_{j_1} \cdots r_{j_{k-1}} r_{j_k} r_{j_{k-1}} \cdots r_{j_1} r_i )^2 =1$ is equivalent to $(r_i r_{j_1} \cdots r_{j_{k-1}} r_{j_k} r_{j_{k-1}} \cdots r_{j_1})^2 =1$, which is precisely the relation induced by the ``big'' cycle. 
\end{itemize}

\medskip
In the following we will consider all possible constellations in type $B_n$.

1) The vertices $t_i$ and $t_{i+1}$ are part of a full subgraph which is a line. We excluded the case that $t_i$ is the distinguished vertex. If $t_{i+1}$ is the distinguished vertex, then it has to be of valency one, because otherwise $\Gamma \setminus \{ t_i \}$ would be disconnected. 

\begin{figure}[H]
\centering
\begin{tikzpicture}[scale=0.95, thick,>=latex, double equal sign distance]
   \tikzset{triple/.style={double,postaction={draw,-}}}

  \node (11) at (1,0) [circle, draw, inner sep=0pt, minimum width=4pt]{};
  \node (a11) at (0.8,0.4) []{$t_{i+1}$};
  \node (21) at (2,0) [circle, draw, inner sep=0pt, minimum width=4pt]{};
  \node (a21) at (2,0.4) []{$t_i$};
  \node (31) at (3,0) [circle, draw, inner sep=0pt, minimum width=4pt]{};
  \node (a31) at (3,0.4) []{$t_{j}$};

  \draw[double] (11) to (21);
  \draw[-] (21) to (31);
  
  \node (A) at (3.75,0) []{};
  \node (AB) at (4.5,0.35) []{$\sigma_i$};
  \node (B) at (5.25,0) []{};

  \draw[->] (A) to (B);

  \node (A1) at (6,0.5) [circle, draw, inner sep=0pt, minimum width=4pt]{};
  \node (A11) at (5.9,0.9) []{$r_{i+1}$};
  \node (A2) at (6,-0.5) [circle, draw, inner sep=0pt, minimum width=4pt]{};
  \node (A21) at (6,-0.9) []{$r_j$};
  \node (A3) at (7,0) [circle, draw, inner sep=0pt, minimum width=4pt]{};
  \node (A31) at (7.1,0.4) []{$r_i$};

  \draw[-] (A1) to (A2);
  \draw[double] (A1) to (A3);
  \draw[double] (A2) to (A3);

\end{tikzpicture}
\end{figure}

\begin{itemize}
\item $(t_i t_j)^3=1$ $\rightarrow$ $(\varphi(t_i) \varphi(t_j))^3= (r_{i+1}r_j)^3=1$,
\item $(t_{i+1}t_j)^2=1$ $\rightarrow$ $(\varphi(t_{i+1}) \varphi(t_j))^2= (r_{i+1}r_ir_{i+1} r_j)^2$. But $(r_{i+1}r_ir_{i+1} r_j)^2=1$ is equivalent to $(r_jr_{i+1}r_ir_{i+1})^2=1$ and this relation holds because of the cycle in $\sigma_i(\Gamma)$. 
\end{itemize}

By the description of Carter diagrams of type $B_n$ in Section \ref{sec:CarterDiagB}, we know that if $\Gamma$ contains a chordless cycle, then it has to be a $3$-cycle.

\medskip

2) The vertices $t_i$ and $t_{i+1}$ are part of a full subgraph which is a $3$-cycle. The vertex $t_{i+1}$ is the distinguished vertex.

\begin{figure}[H]
\centering
\begin{tikzpicture}[scale=0.95, thick,>=latex, double equal sign distance]
   \tikzset{triple/.style={double,postaction={draw,-}}}

  \node (a1) at (2,0.5) [circle, draw, inner sep=0pt, minimum width=4pt]{};
  \node (a11) at (2,0.85) []{$t_{i+1}$};
  \node (a2) at (2,-0.5) [circle, draw, inner sep=0pt, minimum width=4pt]{};
  \node (a21) at (2,-0.85) []{$t_{i}$};
  \node (a3) at (3,0) [circle, draw, inner sep=0pt, minimum width=4pt]{};
  \node (a31) at (3,0.35) []{$t_{k}$};

  \draw[double] (a1) to (a2);
  \draw[-] (a2) to (a3);
  \draw[double] (a1) to (a3);
  
  \node (A) at (3.75,0) []{};
  \node (AB) at (4.5,0.35) []{$\sigma_i$};
  \node (B) at (5.25,0) []{};

  \draw[->] (A) to (B);

  \node (A1) at (6,0.5) [circle, draw, inner sep=0pt, minimum width=4pt]{};
  \node (A11) at (6,0.85) []{$r_i$};
  \node (A2) at (6,-0.5) [circle, draw, inner sep=0pt, minimum width=4pt]{};
  \node (A21) at (6,-0.85) []{$r_{i+1}$};
  \node (A3) at (7,0) [circle, draw, inner sep=0pt, minimum width=4pt]{};
  \node (A31) at (7,0.35) []{$r_k$};

  \draw[double] (A1) to (A2);
  \draw[-] (A2) to (A3);

\end{tikzpicture}
\end{figure}

\begin{itemize}
\item $(t_i t_{k})^3=1$ $\rightarrow$ $(\varphi(t_i) \varphi(t_k))^3= (r_{i+1}r_k)^3=1$. 
\item $(t_{i+1}t_k)^4=1$ $\rightarrow$ applying the map $\varphi$ yields:
\begin{align*}
(\varphi(t_{i+1}) \varphi(t_k))^4 & = (r_{i+1} r_i r_{i+1} r_k)^4\\
{} & = (r_{i+1}r_i \underbrace{r_{i+1} r_k)(r_{i+1}}_{=r_k r_{i+1}r_k}r_i \underbrace{r_{i+1} r_k)(r_{i+1}}_{=r_k r_{i+1}r_k}r_i \underbrace{r_{i+1} r_k)(r_{i+1}}_{=r_k r_{i+1}r_k}r_i r_{i+1} r_k))\\
{} & = r_{i+1} r_i r_k r_{i+1} \underbrace{r_k r_i r_k}_{=r_i} r_{i+1} \underbrace{r_k r_i r_k}_{=r_i} r_{i+1} r_k r_i r_{i+1} r_k\\
{} & = r_{i+1} r_i r_k r_{i+1} r_i r_{i+1} r_i r_{i+1} r_k r_i r_{i+1} r_k\\
{} & = r_{i+1} r_k \underbrace{r_i r_{i+1} r_i r_{i+1} r_i r_{i+1} r_i}_{r_{i+1}} r_k r_{i+1} r_k\\
{} & = (r_{i+1} r_k)^3=1.
\end{align*}
\item $(t_{i} t_k t_{i+1} t_k)^2=1$ $\rightarrow$ applying the map $\varphi$ yields:
\begin{align*}
(\varphi(t_{i}) \varphi(t_k) \varphi(t_{i+1}) \varphi(t_{k}))^2 & = (r_{i+1} r_k r_{i+1} r_i r_{i+1} r_k)^2\\
{} & = (r_{i+1} r_k r_{i+1} r_i \underbrace{r_{i+1} r_k)(r_{i+1} r_k r_{i+1}}_{=r_k} r_i r_{i+1} r_k)\\
{} & = r_{i+1} r_k r_{i+1} \underbrace{r_i r_k r_i}_{=r_k} r_{i+1} r_k\\
{} & = (r_{i+1} r_k)^3 = 1.
\end{align*}
\end{itemize}

3) The vertex $t_{i+1}$ is part of a full subgraph which is a $3$-cycle and which contains the distinguished vertex $t_j$, while $t_i$ is not a vertex of that $3$-cycle.

\begin{figure}[H]
\centering
\begin{tikzpicture}[scale=0.95, thick,>=latex, double equal sign distance]
   \tikzset{triple/.style={double,postaction={draw,-}}}

  \node (a1) at (1,0.5) [circle, draw, inner sep=0pt, minimum width=4pt]{};
  \node (a11) at (1,0.85) []{$t_{j}$};
  \node (a2) at (1,-0.5) [circle, draw, inner sep=0pt, minimum width=4pt]{};
  \node (a21) at (1,-0.85) []{$t_{k}$};
  \node (a3) at (2,0) [circle, draw, inner sep=0pt, minimum width=4pt]{};
  \node (a31) at (2.15,0.35) []{$t_{i+1}$};
  \node (a4) at (3,0) [circle, draw, inner sep=0pt, minimum width=4pt]{};
  \node (a41) at (3,0.35) []{$t_{i}$};

  \draw[double] (a1) to (a2);
  \draw[-] (a2) to (a3);
  \draw[double] (a1) to (a3);
  \draw[-] (a3) to (a4);
  
  \node (A) at (3.75,0) []{};
  \node (AB) at (4.5,0.35) []{$\sigma_i$};
  \node (B) at (5.25,0) []{};

  \draw[->] (A) to (B);

  \node (A1) at (6,0.5) [circle, draw, inner sep=0pt, minimum width=4pt]{};
  \node (A11) at (6,0.85) []{$r_j$};
  \node (A2) at (6,-0.5) [circle, draw, inner sep=0pt, minimum width=4pt]{};
  \node (A21) at (6,-0.85) []{$r_{k}$};
  \node (A3) at (7,0) [circle, draw, inner sep=0pt, minimum width=4pt]{};
  \node (A31) at (7.05,0.35) []{$r_i$};
  \node (A4) at (8,0) [circle, draw, inner sep=0pt, minimum width=4pt]{};
  \node (A41) at (8,0.35) []{$r_{i+1}$};

  \draw[double] (A1) to (A2);
  \draw[-] (A2) to (A3);
  \draw[double] (A1) to (A3);
  \draw[-] (A3) to (A4);

\end{tikzpicture}
\end{figure}

\begin{itemize}
\item $(t_i t_j)^2=1$ $\rightarrow$ $(\varphi(t_i) \varphi(t_j))^2= (r_{i+1}r_j)^2=1$,
\item $(t_{i+1}t_j)^4=1$ $\rightarrow$ $(\varphi(t_{i+1}) \varphi(t_j))^4= (r_i r_j)^4=1$.
\item $(t_{i+1}t_kt_jt_k)^2$ $\rightarrow$ $(\varphi(t_{i+1})\varphi(t_k)\varphi(t_j) \varphi(t_k))^2 = (r_{i+1}r_i r_{i+1}r_kr_jr_k)^2 = r_{i+1}(r_i r_kr_jr_k)^2 r_{i+1}=1$, where $(r_i r_kr_jr_k)^2=1$ holds because of the cycle in $\sigma_i(\Gamma)$.
\end{itemize}

4) The vertex $t_{i}$ is part of a full subgraph which is a $3$-cycle and which contains the distinguished vertex $t_j$, while $t_{i+1}$ is not a vertex of that $3$-cycle.

\begin{figure}[H]
\centering
\begin{tikzpicture}[scale=0.75, thick,>=latex, double equal sign distance]
   \tikzset{triple/.style={double,postaction={draw,-}}}

  \node (a2) at (0.5,0) [circle, draw, inner sep=0pt, minimum width=4pt]{};
  \node (a21) at (0.5,0.5) []{$t_{i+1}$};
  \node (a3) at (2,0) [circle, draw, inner sep=0pt, minimum width=4pt]{};
  \node (a31) at (1.9,0.5) []{$t_{i}$};
  \node (a4) at (3,1.5) [circle, draw, inner sep=0pt, minimum width=4pt]{};
  \node (a41) at (3,1.95) []{$t_{j}$};
  \node (a5) at (3,-1.5) [circle, draw, inner sep=0pt, minimum width=4pt]{};
  \node (a51) at (3,-1.95) []{$t_{k}$};

  \draw[-] (a2) to (a3);
  \draw[double] (a3) to (a4);
  \draw[-] (a3) to (a5);
  \draw[double] (a4) to (a5);

  \node (A) at (4.4,0) []{};
  \node (AB) at (5.15,0.4) []{$\sigma_i$};
  \node (B) at (5.9,0) []{};

  \draw[->] (A) to (B);

  \node (A2) at (7.5,0) [circle, draw, inner sep=0pt, minimum width=4pt]{};
  \node (A21) at (7.27,0.5) []{$r_{i}$};
  \node (A3) at (9,0) [circle, draw, inner sep=0pt, minimum width=4pt]{};
  \node (A31) at (8.7,0.5) []{$r_{i+1}$};
  \node (A4) at (10,1.5) [circle, draw, inner sep=0pt, minimum width=4pt]{};
  \node (A41) at (10,1.95) []{$r_{j}$};
  \node (A5) at (10,-1.5) [circle, draw, inner sep=0pt, minimum width=4pt]{};
  \node (A51) at (10,-1.95) []{$r_{k}$};

  \draw[-] (A2) to (A3);
  \draw[double] (A3) to (A4);
  \draw[-] (A3) to (A5);
  \draw[double] (A4) to (A5);
  \draw[double, bend left=20] (A2) to (A4);
  \draw[-, bend right=20] (A2) to (A5);

\end{tikzpicture}
\end{figure}

\begin{itemize}
\item $(t_i t_j)^4=1$ $\rightarrow$ $(\varphi(t_i) \varphi(t_j))^4= (r_{i+1}r_j)^4=1$. Analogously we can argue for $(t_{i} t_{k})^4=1$.
\item $(t_{j}t_k)^4=1$ $\rightarrow$ $(\varphi(t_{j}) \varphi(t_k))^4= (r_j r_k)^4=1$.
\item $(t_{i}t_kt_jt_k)^2$ $\rightarrow$ $(\varphi(t_{i})\varphi(t_k)\varphi(t_j) \varphi(t_k))^2 = (r_{i+1}r_kr_jr_k)^2 = 1$, where $(r_{i+1} r_kr_jr_k)^2=1$ by one of the cycles in $\sigma_i(\Gamma)$.
\item $(t_{i+1}t_j)^2=1$ $\rightarrow$ $(\varphi(t_{i+1}) \varphi(t_j))^2 = (r_{i+1}r_i r_{i+1}r_j)^2$, but the equation $(r_{i+1}r_i r_{i+1}r_j)^2=1$ is equivalent to $(r_ir_{i+1}r_j r_{i+1})^2=1$, which holds by one of the cycles in $\sigma_i(\Gamma)$. Analogously we can argue for $(t_{i+1}t_k)^2=1$.
\end{itemize}

5) The vertex $t_{i}$ is part of a full subgraph which is a $3$-cycle, while $t_{i+1}$ is not a vertex of that $3$-cycle but $t_{i+1}$ is the distinguished vertex.

\begin{figure}[H]
\centering
\begin{tikzpicture}[scale=0.75, thick,>=latex, double equal sign distance]
   \tikzset{triple/.style={double,postaction={draw,-}}}

  \node (a2) at (0.5,0) [circle, draw, inner sep=0pt, minimum width=4pt]{};
  \node (a21) at (0.4,0.5) []{$t_{i+1}$};
  \node (a3) at (2,0) [circle, draw, inner sep=0pt, minimum width=4pt]{};
  \node (a31) at (1.9,0.5) []{$t_{i}$};
  \node (a4) at (3,1.5) [circle, draw, inner sep=0pt, minimum width=4pt]{};
  \node (a41) at (3,1.95) []{$t_{j}$};
  \node (a5) at (3,-1.5) [circle, draw, inner sep=0pt, minimum width=4pt]{};
  \node (a51) at (3,-1.95) []{$t_{k}$};

  \draw[double] (a2) to (a3);
  \draw[-] (a3) to (a4);
  \draw[-] (a3) to (a5);
  \draw[-] (a4) to (a5);

  \node (A) at (4.4,0) []{};
  \node (AB) at (5.15,0.4) []{$\sigma_i$};
  \node (B) at (5.9,0) []{};

  \draw[->] (A) to (B);

  \node (A2) at (7.5,0) [circle, draw, inner sep=0pt, minimum width=4pt]{};
  \node (A21) at (7.27,0.5) []{$r_{i}$};
  \node (A3) at (9,0) [circle, draw, inner sep=0pt, minimum width=4pt]{};
  \node (A31) at (8.7,0.5) []{$r_{i+1}$};
  \node (A4) at (10,1.5) [circle, draw, inner sep=0pt, minimum width=4pt]{};
  \node (A41) at (10,1.95) []{$r_{j}$};
  \node (A5) at (10,-1.5) [circle, draw, inner sep=0pt, minimum width=4pt]{};
  \node (A51) at (10,-1.95) []{$r_{k}$};

  \draw[double] (A2) to (A3);
  \draw[-] (A3) to (A4);
  \draw[-] (A3) to (A5);
  \draw[-] (A4) to (A5);
  \draw[double, bend left=20] (A2) to (A4);
  \draw[double, bend right=20] (A2) to (A5);

\end{tikzpicture}
\end{figure}

\begin{itemize}
\item $(t_i t_j)^3=1$ $\rightarrow$ $(\varphi(t_i) \varphi(t_j))^3= (r_{i+1}r_j)^3=1$. Analogously we can argue for $(t_{i} t_{k})^3=1$.
\item $(t_{j}t_k)^3=1$ $\rightarrow$ $(\varphi(t_{j}) \varphi(t_k))^3= (r_j r_k)^3=1$.
\item $(t_{i}t_kt_jt_k)^2$ $\rightarrow$ $(\varphi(t_{i})\varphi(t_k)\varphi(t_j) \varphi(t_k))^2 = (r_{i+1}r_kr_jr_k)^2 = 1$, where $(r_{i+1} r_kr_jr_k)^2=1$ by one of the cycles in $\sigma_i(\Gamma)$.
\item $(t_{i+1}t_j)^2=1$ $\rightarrow$ $(\varphi(t_{i+1}) \varphi(t_j))^2 = (r_{i+1}r_i r_{i+1}r_j)^2$, but the equation $(r_{i+1}r_i r_{i+1}r_j)^2=1$ is equivalent to $(r_ir_{i+1}r_j r_{i+1})^2=1$, which holds by one of the cycles in $\sigma_i(\Gamma)$. Analogously we can argue for $(t_{i+1}t_k)^2=1$.
\end{itemize}
\end{proof}


\medskip
\noindent We have $\sigma_i^{-1}(t_1, \ldots , t_m) = (r_1, \ldots , r_m)$, where $t_j = r_j$ for $j \in [m] \setminus \{i, i+1 \}$, $r_i = t_{i+1}$ and $r_{i+1}= t_{i+1} t_i t_{i+1}$. Similar to Remarks \ref{rem:RelationsSimplyLaced} and \ref{rem:RelationsNonSimplyLaced} we can describe how to obtain the Carter diagram $\sigma_i^{-1}(\Gamma)$ on vertex set $\{r_1, \ldots , r_m \}$ from $\Gamma$. The same arguments as in the proof of Lemma \ref{lem:SurjHomB1} lead to the following result.

\begin{Lemma} \label{lem:SurjHomB2}
Let $\Gamma$ be a Carter diagram of simply-laced type or type $B_n$ with vertex set corresponding to $t_1, \ldots , t_m$ and $i \in [m-1]$. If $(t_i t_{i+1})^2 \neq 1$ (and, in the $B_n$-case, if $t_{i+1}$ is not the distinguished vertex of $\Gamma$), then the map 
$$
\psi: W(\Gamma) \rightarrow W(\sigma_i^{-1}(\Gamma)), ~ t_j \mapsto \begin{cases} 
r_j &\mbox{if } j \neq i, i+1 \\
r_ir_{i+1}r_i &\mbox{if } j=i\\
r_{i} &\mbox{if } j=i+1
\end{cases}
$$
extends to a (surjective) group homomorphism.
\end{Lemma}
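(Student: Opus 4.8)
The plan is to verify directly, via the substitution test \cite[Ch.~4, Proposition~3]{Joh90}, that the defining relations (R1)--(R3) of $W(\Gamma)$ are preserved by $\psi$, exactly as was done for $\varphi$ in the proof of Lemma \ref{lem:SurjHomB1}. First I would record the data of the move: writing $\sigma_i^{-1}(t_1,\ldots,t_m)=(r_1,\ldots,r_m)$ we have $r_j=t_j$ for $j\notin\{i,i+1\}$, together with $r_i=t_{i+1}$ and $r_{i+1}=t_{i+1}t_it_{i+1}$. Since $r_ir_{i+1}=t_{i+1}(t_{i+1}t_it_{i+1})=t_it_{i+1}$, the order of $r_ir_{i+1}$ equals that of $t_it_{i+1}$, so the hypothesis $(t_it_{i+1})^2\neq 1$ is symmetric in the two moves; likewise the excluded configuration (in type $B_n$, $t_{i+1}$ distinguished) is the $i\leftrightarrow i+1$ mirror of the one excluded in Lemma \ref{lem:SurjHomB1} (there, $t_i$ distinguished). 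Surjectivity is immediate, since $r_j=\psi(t_j)$ for $j\neq i,i+1$, $r_i=\psi(t_{i+1})$, and $r_{i+1}=\psi(t_{i+1})\psi(t_i)\psi(t_{i+1})$ all lie in the image.

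The key structural point I would exploit is that $\sigma_i^{-1}$ is obtained from $\sigma_i$ by interchanging the two strands it acts on: the rule producing $\sigma_i^{-1}(\Gamma)$ from $\Gamma$ is the analogue, under the swap $t_i\leftrightarrow t_{i+1}$ and the corresponding relabeling of the $r_j$, of the rule described in Remark \ref{rem:RelationsSimplyLaced} (simply-laced) and Remark \ref{rem:RelationsNonSimplyLaced} (type $B_n$). Concretely, $\sigma_i$ fixes $t_i$ in the $(i+1)$-st slot and conjugates $t_{i+1}$, whereas $\sigma_i^{-1}$ fixes $t_{i+1}$ in the $i$-th slot and conjugates $t_i$. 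Consequently the list of local configurations around the edge $\{t_i,t_{i+1}\}$, and the verification that each incident relation of type (R2) or (R3) maps to a relation of $W(\sigma_i^{-1}(\Gamma))$, is identical to the case analysis in the proof of Lemma \ref{lem:SurjHomB1} once the roles of $t_i$ and $t_{i+1}$ are exchanged. I would run through precisely the same cases: in the simply-laced setting, the configurations where exactly one of $t_i,t_{i+1}$ lies on a cycle and where both lie on an $\ell$-cycle; and in type $B_n$, the five configurations distinguished by the position of the distinguished vertex relative to $t_i$, $t_{i+1}$ and the surrounding $3$-cycle.

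The hard part will be, as in Lemma \ref{lem:SurjHomB1}, the relations of type (R3). I expect the only nonroutine computations to be those in which $t_i$ and $t_{i+1}$ both lie on a chordless cycle, where $\psi$ must send the long cyclic relation of that cycle to the relation of the ``big'' cycle in $\sigma_i^{-1}(\Gamma)$, together with the type-$B_n$ $3$-cycle relations involving the distinguished vertex, where explicit braid cancellations (using $r_{i+1}^2=1$ and the order-$3$ and order-$4$ relations of $\sigma_i^{-1}(\Gamma)$) must be carried out by hand. All of these are the mirror images, under the swap $t_i\leftrightarrow t_{i+1}$, of the displayed computations in the proof of Lemma \ref{lem:SurjHomB1}, and I would reproduce them accordingly, citing Remark \ref{rem:RelationsNonSimplyLaced} for the tracking of the distinguished vertex under $\sigma_i^{-1}$. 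One could alternatively apply Lemma \ref{lem:SurjHomB1} to the factorization $\underline{r}:=\sigma_i^{-1}(\underline{t})$, obtaining a surjection $W(\sigma_i^{-1}(\Gamma))\to W(\Gamma)$ whose restriction to generators is formally inverse to $\psi$; but since this alone does not establish that $\psi$ respects the relations of $W(\Gamma)$, the direct case check remains the cleanest route.
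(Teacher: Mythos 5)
Your proposal is correct and follows essentially the same route as the paper: the paper's own proof simply records the data of the inverse Hurwitz move ($r_i=t_{i+1}$, $r_{i+1}=t_{i+1}t_it_{i+1}$), notes that $\sigma_i^{-1}(\Gamma)$ is obtained from $\Gamma$ by the mirrored analogues of Remarks \ref{rem:RelationsSimplyLaced} and \ref{rem:RelationsNonSimplyLaced}, and then invokes the same substitution-test case analysis as in Lemma \ref{lem:SurjHomB1} with the roles of $t_i$ and $t_{i+1}$ exchanged. Your additional observations (surjectivity from the generators lying in the image, the matching of hypotheses under the swap, and the correct dismissal of deducing $\psi$ from Lemma \ref{lem:SurjHomB1} applied to $\underline{r}$ alone) are all sound.
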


The maps $\varphi$ from Lemma \ref{lem:SurjHomB1} and $\psi$ from Lemma \ref{lem:SurjHomB2} are mutually inverse group homomorphisms. Therefore we obtain the following.
\begin{Proposition} \label{prop:HurwitzPresInv}
The groups $W(\Gamma)$ and $W(\sigma_i(\Gamma))$ (resp. $W(\sigma_i^{-1}(\Gamma))$) are isomorphic.
\end{Proposition}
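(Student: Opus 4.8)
The plan is to show that the two homomorphisms $\varphi$ and $\psi$ produced by Lemmas \ref{lem:SurjHomB1} and \ref{lem:SurjHomB2} are mutually inverse, so that each is an isomorphism. First I would dispose of the degenerate situations. If $(t_i t_{i+1})^2 = 1$, then $t_i$ and $t_{i+1}$ commute, so $r_i = t_i t_{i+1} t_i = t_{i+1}$ and $r_{i+1} = t_i$; the Hurwitz move merely transposes two commuting generators and leaves the diagram unchanged, whence $W(\Gamma) = W(\sigma_i(\Gamma))$ and there is nothing to prove. Likewise, in type $B_n$, if $t_i$ is the distinguished vertex, then Proposition \ref{prop:CaseDistVertex} already gives $W(\Gamma) \cong W(\sigma_i(\Gamma))$. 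Hence I may assume $(t_i t_{i+1})^2 \neq 1$ and, in the $B_n$-case, that $t_i$ is not the distinguished vertex.

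Next I would set $\Gamma' := \sigma_i(\Gamma)$ and apply Lemma \ref{lem:SurjHomB2} to $\Gamma'$ in place of $\Gamma$. The essential bookkeeping is the identity $\sigma_i^{-1}(\Gamma') = \Gamma$: writing $\sigma_i(t_1, \ldots, t_m) = (r_1, \ldots, r_m)$ with $r_i = t_i t_{i+1} t_i$ and $r_{i+1} = t_i$, a direct computation gives $\sigma_i^{-1}(r_1, \ldots, r_m) = (t_1, \ldots, t_m)$, since $r_{i+1} r_i r_{i+1} = t_i (t_i t_{i+1} t_i) t_i = t_{i+1}$. Thus Lemma \ref{lem:SurjHomB2}, applied to $\Gamma'$ with generators $r_1, \ldots, r_m$, yields a homomorphism $\psi : W(\Gamma') \to W(\Gamma)$ with $r_j \mapsto t_j$ for $j \neq i, i+1$, together with $r_i \mapsto t_i t_{i+1} t_i$ and $r_{i+1} \mapsto t_i$. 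Before invoking it I must check its hypothesis, namely that $r_{i+1}$ is not the distinguished vertex of $\Gamma'$; this follows from Remark \ref{rem:RelationsNonSimplyLaced}, which (under the standing assumption that $t_i$ is not distinguished) places the distinguished vertex of $\sigma_i(\Gamma)$ either at $r_i$ or at some $r_j$ with $j \neq i, i+1$, and never at $r_{i+1}$.

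Finally I would verify $\psi \circ \varphi = \mathrm{id}_{W(\Gamma)}$ and $\varphi \circ \psi = \mathrm{id}_{W(\Gamma')}$ by evaluating on generators. For the first, $t_j \mapsto r_j \mapsto t_j$ when $j \neq i, i+1$, while $t_i \mapsto r_{i+1} \mapsto t_i$ and $t_{i+1} \mapsto r_{i+1} r_i r_{i+1} \mapsto t_i (t_i t_{i+1} t_i) t_i = t_{i+1}$. For the second, symmetrically, $r_j \mapsto t_j \mapsto r_j$, together with $r_i \mapsto t_i t_{i+1} t_i \mapsto r_{i+1}(r_{i+1} r_i r_{i+1}) r_{i+1} = r_i$ and $r_{i+1} \mapsto t_i \mapsto r_{i+1}$. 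Therefore $\varphi$ and $\psi$ are mutually inverse isomorphisms, giving $W(\Gamma) \cong W(\sigma_i(\Gamma))$; the assertion for $\sigma_i^{-1}$ then follows by applying this result with $\sigma_i^{-1}(\Gamma)$ in place of $\Gamma$, using $\sigma_i(\sigma_i^{-1}(\Gamma)) = \Gamma$. The main obstacle is not any single calculation, since each is short, but rather the careful relabeling of the generators under $\sigma_i$ and $\sigma_i^{-1}$ together with confirming that the distinguished-vertex hypotheses of both lemmas hold simultaneously, so that $\varphi$ and $\psi$ are genuinely defined on the two groups in question.
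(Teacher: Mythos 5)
Your proposal is correct and follows the same route as the paper, which simply asserts that the homomorphism $\varphi$ of Lemma \ref{lem:SurjHomB1} and the homomorphism $\psi$ of Lemma \ref{lem:SurjHomB2} (applied to $\sigma_i(\Gamma)$, whose image under $\sigma_i^{-1}$ is $\Gamma$) are mutually inverse. You have merely made explicit the bookkeeping the paper leaves implicit: the identity $\sigma_i^{-1}(\sigma_i(\Gamma)) = \Gamma$, the verification via Remark \ref{rem:RelationsNonSimplyLaced} that the distinguished-vertex hypothesis of Lemma \ref{lem:SurjHomB2} holds for $\sigma_i(\Gamma)$, the computation on generators, and the treatment of the degenerate cases via Proposition \ref{prop:CaseDistVertex}.
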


\medskip
$\phantom{4}$

\begin{proof}[\textbf{Proof of Theorem \ref{thm:Main2}}]
Let us begin with the case that the root system $\Phi$ is simply-laced or of type $B_n$. Let $\Gamma'$ be a Carter diagram of the same Dynkin type as $\Phi$. By Lemma \ref{lem:CarterIrredQuasiCox}, the Carter diagram $\Gamma'$ can be realized by a reduced reflection factorization $(r_1, \dots , r_m) \in \Red_T(w)$ for some quasi-Coxeter element $w \in W_{\Phi}$. By \cite[Theorem 1.1 and Remark 8.3]{BGRW} we find $(t_1, \ldots , t_m)$ and $\sigma \in \mathcal{B}_m$ such that $\sigma(t_1, \ldots , t_m)= (r_1, \ldots , r_m)$ and the Carter diagram $\Gamma$ corresponding to $(t_1, \ldots , t_m)$ is admissible. In particular we have $\sigma(\Gamma)= \Gamma'$. By Proposition \ref{prop:Main2Admissible} we have $W(\Gamma) \cong W_{\Phi}$ and repeated use of Proposition \ref{prop:HurwitzPresInv} yields $W(\Gamma) \cong W(\Gamma')$.
\end{proof}

\begin{remark}
We used the Hurwitz action to prove Theorem \ref{thm:Main2}, but we did not define a Hurwitz action on Carter diagrams. In fact, it might happen that there are elements $x,y \in W$ and reduced reflection factorizations $\underline{t} \in \Red_T(x), ~\underline{r} \in \Red_T(y)$ such that $\underline{t}$ and $\underline{r}$ give rise to the same Carter diagram, but the set of Carter diagrams associated to the Hurwitz orbit of $\underline{t}$ and the set of Carter diagrams associated to the Hurwitz orbit of $\underline{r}$ are {\bf not} equal (see the next example). The reason for this is that the Hurwitz orbit of a reflection factorizaton $\underline{t}=(t_1, \ldots, t_m)$ depends on the actual tuple, while the associated Carter diagram only depends on the set $\{ t_1, \ldots , t_m\}$. 
\end{remark}

\begin{example}
Consider a Coxeter system $(W, \{s_1, s_2, s_3, s_4\})$ of type $D_4$, where $s_2$ does not commute with any of the other simple reflections. Let $T$ be its set of reflections and $c = s_1 s_2s_3s_4$ be a Coxeter element. Inside the Hurwitz orbit of the corresponding factorization we find
$$
(s_1, s_2, s_3, s_4 ) \sim (s_2, s_2s_1s_2, s_3, s_4).
$$
The Carter diagram associated to $(s_2, s_2s_1s_2, s_3, s_4)$ is shown on the right side of the following picture, while the Carter diagram of $(s_1, s_2, s_3, s_4 )$ is the corresponding Dynkin diagram.
\begin{figure}[H]
\centering
\begin{tikzpicture}[scale=0.75, thick,>=latex]

  \node (21) at (6,0) [circle, draw, inner sep=0pt, minimum width=4pt]{};
  \node (31) at (6,-2) [circle, draw, inner sep=0pt, minimum width=4pt]{};
  \node (41) at (5,-1) [circle, draw, inner sep=0pt, minimum width=4pt]{};
  \node (51) at (7,-1) [circle, draw, inner sep=0pt, minimum width=4pt]{};

  \draw[-] (21) to (41);
  \draw[-] (31) to (41);
  
  \draw[-] (21) to (51);
  \draw[-] (31) to (51);
  \draw[-] (21) to (31);
  
  \node (a21) at (3,0) [circle, draw, inner sep=0pt, minimum width=4pt]{};
  \node (a31) at (3,-2) [circle, draw, inner sep=0pt, minimum width=4pt]{};
  \node (a41) at (2,-1) [circle, draw, inner sep=0pt, minimum width=4pt]{};
  \node (a51) at (4,-1) [circle, draw, inner sep=0pt, minimum width=4pt]{};

  \draw[-] (a21) to (a41);
  \draw[-] (a31) to (a41);
  
  \draw[-] (a21) to (a51);
  \draw[-] (a31) to (a51);

\end{tikzpicture}
\end{figure}
The element $w:= (s_2s_3s_2)s_1(s_1s_2s_1)(s_1s_2s_4s_2s_1) \in W$ is a quasi-Coxeter element, but not a Coxeter element. Inside the Hurwitz orbit we find
$$
(s_2s_3s_2,s_1,s_1s_2s_1,s_1s_2s_4s_2s_1) \sim (s_2s_3s_2s_1s_2s_3s_2, s_2s_3s_2,s_1s_2s_1,s_1s_2s_4s_2s_1).
$$
The Carter diagram associated to the left factorization is the one on the right side of the above picture, while the Carter diagram associated to the right factorization is the one on the left side. These two are the only Carter diagrams which appear in the Hurwitz orbit of $(s_2s_3s_2,s_1,s_1s_2s_1,s_1s_2s_4s_2s_1)$. In particular, the sets $\Red_T(c)$ and $\Red_T(w)$ have both diagrams shown above as associated Carter diagrams, while the Dynkin diagram of type $D_4$ just appears as a Carter diagram associated to a reduced reflection factorization in $\Red_T(c)$. 
\end{example}


\subsection{The non-crystallographic reflection groups}

Let $(W,S)$ be an arbitrary Coxeter system of rank $n$ with set of reflections $T$. We generalize the definiton of a Carter diagram as follows. 

\begin{Definition}
Let $w \in W$ and $(t_1, \ldots , t_n) \in \Red_T(w)$. We define the \defn{Carter diagram} $\Gamma$ corresponding to this reduced reflection factorization to be the graph on $n$ vertices corresponding to $t_1 ,\ldots , t_n$. Two vertices corresponding to $t_i$ and $t_j$ ($i \neq j$) are joined by $w_{ij}:=m_{ij}-2$ edges, where $m_{ij}$ is the order of $t_it_j$.

Note that this definion is consistent with Definition \ref{def:CarterDiagram} if $W=W_{\Phi}$ is a Weyl group (by Remark \ref{rem:MinCarter}).

We call $\Gamma$ to be of type $I_2(m)$ (resp. $H_3$ resp. $H_4$) if $\langle t_1, \ldots , t_n \rangle$ is a Coxeter group of type $I_2(m)$ (resp. $H_3$ resp. $H_4$).
\end{Definition}

The only Carter diagram of type $I_2(m)$ is the corresponding Coxeter diagram. For the types $H_3$ and $H_4$ we list all Carter diagrams in Figures \ref{fig:CarterH3} and \ref{fig:CarterH4}.

\begin{figure}[H]
\centering
\begin{tikzpicture}[scale=0.65, thick,>=latex, double distance = 2.8pt]
  \tikzset{triple/.style={double,postaction={draw,-}}}

  \node (11) at (4,-1) [circle, draw, inner sep=0pt, minimum width=4pt]{};
  \node (21) at (1,-1) [circle, draw, inner sep=0pt, minimum width=4pt]{};
  \node (41) at (2.5,-1) [circle, draw, inner sep=0pt, minimum width=4pt]{};

  \draw[triple] (21) to (41);
  \draw[-] (41) to (11);

  \node (b1) at (5,0) [circle, draw, inner sep=0pt, minimum width=4pt]{};
  \node (b2) at (7,0) [circle, draw, inner sep=0pt, minimum width=4pt]{};
  \node (b4) at (5,-2) [circle, draw, inner sep=0pt, minimum width=4pt]{};
  
  \draw[triple] (b1) to (b2);
  \draw[-]      (b4) to (b1);
  \draw[triple] (b2) to (b4);

  \node (c1) at (8,0) [circle, draw, inner sep=0pt, minimum width=4pt]{};
  \node (c2) at (10,0) [circle, draw, inner sep=0pt, minimum width=4pt]{};
  \node (c4) at (8,-2) [circle, draw, inner sep=0pt, minimum width=4pt]{};
  
  \draw[triple] (c1) to (c2);
  \draw[triple] (c4) to (c1);
  \draw[triple] (c2) to (c4);

  \node (d1) at (11,0) [circle, draw, inner sep=0pt, minimum width=4pt]{};
  \node (d2) at (13,0) [circle, draw, inner sep=0pt, minimum width=4pt]{};
  \node (d4) at (11,-2) [circle, draw, inner sep=0pt, minimum width=4pt]{};
  
  \draw[-] (d1) to (d2);
  \draw[-] (d4) to (d1);
  \draw[triple] (d2) to (d4);

  \node (a11) at (17,-1) [circle, draw, inner sep=0pt, minimum width=4pt]{};
  \node (a21) at (14,-1) [circle, draw, inner sep=0pt, minimum width=4pt]{};
  \node (a41) at (15.5,-1) [circle, draw, inner sep=0pt, minimum width=4pt]{};

  \draw[triple] (a21) to (a41);
  \draw[triple] (a41) to (a11);

\end{tikzpicture}
\caption{Carter diagrams of type $H_3$} \label{fig:CarterH3}
\end{figure}
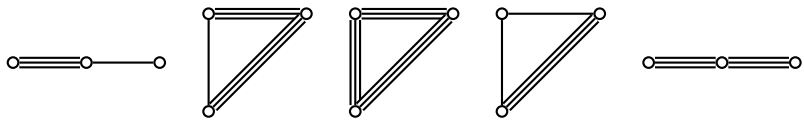

\newpage

\begin{figure}[H]
\centering
\begin{tikzpicture}[scale=0.55, thick,>=latex, double distance = 2.8pt]
  \tikzset{triple/.style={double,postaction={draw,-}}}

  \node (a1) at (0,0) [circle, draw, inner sep=0pt, minimum width=4pt]{};
  \node (a2) at (2,0) [circle, draw, inner sep=0pt, minimum width=4pt]{};
  \node (a3) at (4,0) [circle, draw, inner sep=0pt, minimum width=4pt]{};
  \node (a4) at (0,-2) [circle, draw, inner sep=0pt, minimum width=4pt]{};

  \draw[-]      (a1) to (a2);
  \draw[-]      (a2) to (a3);
  \draw[triple] (a1) to (a4);
  \draw[triple] (a2) to (a4);

  \node (b1) at (6,0) [circle, draw, inner sep=0pt, minimum width=4pt]{};
  \node (b2) at (8,0) [circle, draw, inner sep=0pt, minimum width=4pt]{};
  \node (b3) at (10,0) [circle, draw, inner sep=0pt, minimum width=4pt]{};
  \node (b4) at (6,-2) [circle, draw, inner sep=0pt, minimum width=4pt]{};

  \draw[triple] (b1) to (b2);
  \draw[-]      (b2) to (b3);
  \draw[triple] (b1) to (b4);
  \draw[triple] (b2) to (b4);

  \node (c1) at (12,0) [circle, draw, inner sep=0pt, minimum width=4pt]{};
  \node (c2) at (14,0) [circle, draw, inner sep=0pt, minimum width=4pt]{};
  \node (c3) at (16,0) [circle, draw, inner sep=0pt, minimum width=4pt]{};
  \node (c4) at (12,-2) [circle, draw, inner sep=0pt, minimum width=4pt]{};
  
  \draw[-]      (c1) to (c2);
  \draw[triple] (c2) to (c3);
  \draw[-]      (c1) to (c4);
  \draw[-]      (c4) to (c2);

  \node (d1) at (18,-1) [circle, draw, inner sep=0pt, minimum width=4pt]{};
  \node (d2) at (20,-1) [circle, draw, inner sep=0pt, minimum width=4pt]{};
  \node (d3) at (22,-1) [circle, draw, inner sep=0pt, minimum width=4pt]{};
  \node (d4) at (24,-1) [circle, draw, inner sep=0pt, minimum width=4pt]{};

  \draw[triple] (d1) to (d2);
  \draw[-]      (d2) to (d3);
  \draw[-]      (d3) to (d4);

  \node (e1) at (0,-3) [circle, draw, inner sep=0pt, minimum width=4pt]{};
  \node (e2) at (2,-3) [circle, draw, inner sep=0pt, minimum width=4pt]{};
  \node (e3) at (0,-5) [circle, draw, inner sep=0pt, minimum width=4pt]{};
  \node (e4) at (2,-5) [circle, draw, inner sep=0pt, minimum width=4pt]{};

  \draw[-]      (e1) to (e2);
  \draw[triple] (e2) to (e3);
  \draw[triple] (e1) to (e3);
  \draw[-]      (e3) to (e4);
  \draw[triple] (e2) to (e4);

  \node (f1) at (6,-3) [circle, draw, inner sep=0pt, minimum width=4pt]{};
  \node (f2) at (8,-3) [circle, draw, inner sep=0pt, minimum width=4pt]{};
  \node (f3) at (6,-5) [circle, draw, inner sep=0pt, minimum width=4pt]{};
  \node (f4) at (8,-5) [circle, draw, inner sep=0pt, minimum width=4pt]{};

  \draw[-]      (f1) to (f2);
  \draw[triple] (f1) to (f3);
  \draw[-]      (f1) to (f4);
  \draw[triple] (f2) to (f3);
  \draw[-]      (f2) to (f4);
  \draw[triple] (f3) to (f4);

  \node (g1) at (12,-3) [circle, draw, inner sep=0pt, minimum width=4pt]{};
  \node (g2) at (14,-3) [circle, draw, inner sep=0pt, minimum width=4pt]{};
  \node (g3) at (12,-5) [circle, draw, inner sep=0pt, minimum width=4pt]{};
  \node (g4) at (14,-5) [circle, draw, inner sep=0pt, minimum width=4pt]{};

  \draw[triple] (g1) to (g2);
  \draw[triple] (g1) to (g3);
  \draw[triple] (g1) to (g4);
  \draw[-]      (g2) to (g3);
  \draw[triple] (g2) to (g4);
  \draw[triple] (g3) to (g4);

  \node (h1) at (18,-3) [circle, draw, inner sep=0pt, minimum width=4pt]{};
  \node (h2) at (20,-3) [circle, draw, inner sep=0pt, minimum width=4pt]{};
  \node (h3) at (18,-5) [circle, draw, inner sep=0pt, minimum width=4pt]{};
  \node (h4) at (20,-5) [circle, draw, inner sep=0pt, minimum width=4pt]{};

  \draw[triple] (h1) to (h2);
  \draw[-]      (h2) to (h4);
  \draw[triple] (h4) to (h3);
  \draw[-]      (h3) to (h1);

    \node (i1) at (0,-6) [circle, draw, inner sep=0pt, minimum width=4pt]{};
  \node (i2) at (2,-6) [circle, draw, inner sep=0pt, minimum width=4pt]{};
  \node (i3) at (0,-8) [circle, draw, inner sep=0pt, minimum width=4pt]{};
  \node (i4) at (2,-8) [circle, draw, inner sep=0pt, minimum width=4pt]{};

  \draw[-] (i1) to (i2);
  \draw[triple] (i2) to (i3);
  \draw[-] (i1) to (i3);
  \draw[-] (i3) to (i4);
  \draw[triple] (i2) to (i4);

  \node (j1) at (6,-6) [circle, draw, inner sep=0pt, minimum width=4pt]{};
  \node (j2) at (8,-6) [circle, draw, inner sep=0pt, minimum width=4pt]{};
  \node (j3) at (6,-8) [circle, draw, inner sep=0pt, minimum width=4pt]{};
  \node (j4) at (8,-8) [circle, draw, inner sep=0pt, minimum width=4pt]{};

  \draw[-] (j1) to (j2);
  \draw[triple] (j1) to (j3);
  \draw[-] (j1) to (j4);
  \draw[-] (j2) to (j4);
  \draw[-] (j3) to (j4);

  \node (k1) at (12,-6) [circle, draw, inner sep=0pt, minimum width=4pt]{};
  \node (k2) at (14,-6) [circle, draw, inner sep=0pt, minimum width=4pt]{};
  \node (k3) at (12,-8) [circle, draw, inner sep=0pt, minimum width=4pt]{};
  \node (k4) at (14,-8) [circle, draw, inner sep=0pt, minimum width=4pt]{};

  \draw[triple] (k1) to (k2);
  \draw[triple] (k1) to (k3);
  \draw[-] (k1) to (k4);
  \draw[-] (k2) to (k3);
  \draw[triple] (k2) to (k4);
  \draw[triple] (k3) to (k4);

  \node (l1) at (18,-6) [circle, draw, inner sep=0pt, minimum width=4pt]{};
  \node (l2) at (20,-6) [circle, draw, inner sep=0pt, minimum width=4pt]{};
  \node (l3) at (18,-8) [circle, draw, inner sep=0pt, minimum width=4pt]{};
  \node (l4) at (20,-8) [circle, draw, inner sep=0pt, minimum width=4pt]{};

  \draw[triple] (l1) to (l2);
  \draw[-] (l2) to (l4);
  \draw[-] (l4) to (l3);
  \draw[-] (l3) to (l1);

  \node (m1) at (0,-9) [circle, draw, inner sep=0pt, minimum width=4pt]{};
  \node (m2) at (2,-9) [circle, draw, inner sep=0pt, minimum width=4pt]{};
  \node (m3) at (4,-9) [circle, draw, inner sep=0pt, minimum width=4pt]{};
  \node (m4) at (0,-11) [circle, draw, inner sep=0pt, minimum width=4pt]{};

  \draw[-] (m1) to (m2);
  \draw[triple] (m2) to (m3);
  \draw[triple] (m1) to (m4);
  \draw[triple] (m2) to (m4);

  \node (n1) at (6,-9) [circle, draw, inner sep=0pt, minimum width=4pt]{};
  \node (n2) at (8,-9) [circle, draw, inner sep=0pt, minimum width=4pt]{};
  \node (n3) at (10,-9) [circle, draw, inner sep=0pt, minimum width=4pt]{};
  \node (n4) at (6,-11) [circle, draw, inner sep=0pt, minimum width=4pt]{};
  
  \draw[triple] (n1) to (n2);
  \draw[triple] (n2) to (n3);
  \draw[-] (n1) to (n4);
  \draw[triple] (n2) to (n4);

  \node (o1) at (12,-9) [circle, draw, inner sep=0pt, minimum width=4pt]{};
  \node (o2) at (14,-9) [circle, draw, inner sep=0pt, minimum width=4pt]{};
  \node (o3) at (16,-9) [circle, draw, inner sep=0pt, minimum width=4pt]{};
  \node (o4) at (12,-11) [circle, draw, inner sep=0pt, minimum width=4pt]{};

  \draw[-] (o1) to (o2);
  \draw[triple] (o2) to (o3);
  \draw[triple] (o1) to (o4);
  \draw[-] (o4) to (o2);

  \node (p1) at (18,-10) [circle, draw, inner sep=0pt, minimum width=4pt]{};
  \node (p2) at (20,-10) [circle, draw, inner sep=0pt, minimum width=4pt]{};
  \node (p3) at (22,-10) [circle, draw, inner sep=0pt, minimum width=4pt]{};
  \node (p4) at (24,-10) [circle, draw, inner sep=0pt, minimum width=4pt]{};

  \draw[triple] (p1) to (p2);
  \draw[triple] (p2) to (p3);
  \draw[triple] (p3) to (p4);

  \node (aa1) at (0,-12) [circle, draw, inner sep=0pt, minimum width=4pt]{};
  \node (aa2) at (2,-12) [circle, draw, inner sep=0pt, minimum width=4pt]{};
  \node (aa3) at (4,-12) [circle, draw, inner sep=0pt, minimum width=4pt]{};
  \node (aa4) at (0,-14) [circle, draw, inner sep=0pt, minimum width=4pt]{};

  \draw[-]      (aa1) to (aa2);
  \draw[-]      (aa2) to (aa3);
  \draw[triple] (aa1) to (aa4);
  \draw[-]      (aa2) to (aa4);

  \node (bb1) at (6,-12) [circle, draw, inner sep=0pt, minimum width=4pt]{};
  \node (bb2) at (8,-12) [circle, draw, inner sep=0pt, minimum width=4pt]{};
  \node (bb3) at (10,-12) [circle, draw, inner sep=0pt, minimum width=4pt]{};
  \node (bb4) at (6,-14) [circle, draw, inner sep=0pt, minimum width=4pt]{};

  \draw[triple] (bb1) to (bb2);
  \draw[-]      (bb2) to (bb3);
  \draw[-]      (bb1) to (bb4);
  \draw[triple] (bb2) to (bb4);

  \node (cc1) at (12,-12) [circle, draw, inner sep=0pt, minimum width=4pt]{};
  \node (cc2) at (14,-12) [circle, draw, inner sep=0pt, minimum width=4pt]{};
  \node (cc3) at (16,-12) [circle, draw, inner sep=0pt, minimum width=4pt]{};
  \node (cc4) at (12,-14) [circle, draw, inner sep=0pt, minimum width=4pt]{};
  
  \draw[-]      (cc1) to (cc2);
  \draw[-]      (cc2) to (cc3);
  \draw[-]      (cc1) to (cc4);
  \draw[triple] (cc4) to (cc2);

  \node (dd1) at (18,-13) [circle, draw, inner sep=0pt, minimum width=4pt]{};
  \node (dd2) at (20,-13) [circle, draw, inner sep=0pt, minimum width=4pt]{};
  \node (dd3) at (22,-13) [circle, draw, inner sep=0pt, minimum width=4pt]{};
  \node (dd4) at (24,-13) [circle, draw, inner sep=0pt, minimum width=4pt]{};

  \draw[triple] (dd1) to (dd2);
  \draw[triple]      (dd2) to (dd3);
  \draw[-]      (dd3) to (dd4);

  \node (ee1) at (0,-15) [circle, draw, inner sep=0pt, minimum width=4pt]{};
  \node (ee2) at (2,-15) [circle, draw, inner sep=0pt, minimum width=4pt]{};
  \node (ee3) at (0,-17) [circle, draw, inner sep=0pt, minimum width=4pt]{};
  \node (ee4) at (2,-17) [circle, draw, inner sep=0pt, minimum width=4pt]{};

  \draw[-]      (ee1) to (ee2);
  \draw[triple] (ee2) to (ee3);
  \draw[triple] (ee1) to (ee3);
  \draw[triple] (ee3) to (ee4);
  \draw[-]      (ee2) to (ee4);
  \draw[triple] (ee1) to (ee4);

  \node (ff1) at (6,-15) [circle, draw, inner sep=0pt, minimum width=4pt]{};
  \node (ff2) at (8,-15) [circle, draw, inner sep=0pt, minimum width=4pt]{};
  \node (ff3) at (6,-17) [circle, draw, inner sep=0pt, minimum width=4pt]{};
  \node (ff4) at (8,-17) [circle, draw, inner sep=0pt, minimum width=4pt]{};

  \draw[-]      (ff1) to (ff2);
  \draw[-]      (ff1) to (ff3);
  \draw[-]      (ff1) to (ff4);
  \draw[triple] (ff2) to (ff3);
  \draw[-]      (ff2) to (ff4);
  \draw[-]      (ff3) to (ff4);

  \node (gg1) at (12,-15) [circle, draw, inner sep=0pt, minimum width=4pt]{};
  \node (gg2) at (14,-15) [circle, draw, inner sep=0pt, minimum width=4pt]{};
  \node (gg3) at (12,-17) [circle, draw, inner sep=0pt, minimum width=4pt]{};
  \node (gg4) at (14,-17) [circle, draw, inner sep=0pt, minimum width=4pt]{};

  \draw[triple] (gg1) to (gg2);
  \draw[triple] (gg1) to (gg3);
  \draw[triple] (gg2) to (gg3);
  \draw[triple] (gg2) to (gg4);
  \draw[-]      (gg3) to (gg4);

  \node (hh1) at (18,-15) [circle, draw, inner sep=0pt, minimum width=4pt]{};
  \node (hh2) at (20,-15) [circle, draw, inner sep=0pt, minimum width=4pt]{};
  \node (hh3) at (18,-17) [circle, draw, inner sep=0pt, minimum width=4pt]{};
  \node (hh4) at (20,-17) [circle, draw, inner sep=0pt, minimum width=4pt]{};

  \draw[triple] (hh1) to (hh2);
  \draw[-]      (hh2) to (hh4);
  \draw[triple] (hh4) to (hh3);
  \draw[triple] (hh3) to (hh1);
  \draw[-]      (hh1) to (hh4);

  \node (eee1) at (0,-18) [circle, draw, inner sep=0pt, minimum width=4pt]{};
  \node (eee2) at (2,-18) [circle, draw, inner sep=0pt, minimum width=4pt]{};
  \node (eee3) at (0,-20) [circle, draw, inner sep=0pt, minimum width=4pt]{};
  \node (eee4) at (2,-20) [circle, draw, inner sep=0pt, minimum width=4pt]{};

  \draw[triple] (eee1) to (eee2);
  \draw[triple] (eee2) to (eee3);
  \draw[triple] (eee1) to (eee3);
  \draw[triple] (eee3) to (eee4);
  \draw[triple] (eee2) to (eee4);
  \draw[triple] (eee1) to (eee4);

  \node (fff1) at (6,-18) [circle, draw, inner sep=0pt, minimum width=4pt]{};
  \node (fff2) at (8,-18) [circle, draw, inner sep=0pt, minimum width=4pt]{};
  \node (fff3) at (6,-20) [circle, draw, inner sep=0pt, minimum width=4pt]{};
  \node (fff4) at (8,-20) [circle, draw, inner sep=0pt, minimum width=4pt]{};

  \draw[-]      (fff1) to (fff2);
  \draw[triple] (fff1) to (fff3);
  \draw[-]      (fff2) to (fff4);
  \draw[triple] (fff3) to (fff4);

  \node (ggg1) at (12,-18) [circle, draw, inner sep=0pt, minimum width=4pt]{};
  \node (ggg2) at (14,-18) [circle, draw, inner sep=0pt, minimum width=4pt]{};
  \node (ggg3) at (12,-20) [circle, draw, inner sep=0pt, minimum width=4pt]{};
  \node (ggg4) at (14,-20) [circle, draw, inner sep=0pt, minimum width=4pt]{};

  \draw[triple] (ggg1) to (ggg2);
  \draw[triple] (ggg1) to (ggg3);
  \draw[-]      (ggg2) to (ggg3);
  \draw[-]      (ggg2) to (ggg4);
  \draw[-]      (ggg3) to (ggg4);

  \node (hhh1) at (18,-18) [circle, draw, inner sep=0pt, minimum width=4pt]{};
  \node (hhh2) at (20,-18) [circle, draw, inner sep=0pt, minimum width=4pt]{};
  \node (hhh3) at (18,-20) [circle, draw, inner sep=0pt, minimum width=4pt]{};
  \node (hhh4) at (20,-20) [circle, draw, inner sep=0pt, minimum width=4pt]{};

  \draw[-]      (hhh1) to (hhh2);
  \draw[triple] (hhh2) to (hhh4);
  \draw[triple] (hhh4) to (hhh3);
  \draw[-]      (hhh3) to (hhh1);
  \draw[-]      (hhh1) to (hhh4);
  \draw[-]      (hhh2) to (hhh3);

  \node (aaa1) at (0,-21) [circle, draw, inner sep=0pt, minimum width=4pt]{};
  \node (aaa2) at (2,-21) [circle, draw, inner sep=0pt, minimum width=4pt]{};
  \node (aaa3) at (2,-23) [circle, draw, inner sep=0pt, minimum width=4pt]{};
  \node (aaa4) at (0,-23) [circle, draw, inner sep=0pt, minimum width=4pt]{};

  \draw[triple]      (aaa1) to (aaa2);
  \draw[-]      (aaa2) to (aaa3);
  \draw[triple] (aaa1) to (aaa4);
  \draw[triple]      (aaa1) to (aaa3);
  \draw[-]      (aaa4) to (aaa3);

  \node (bbb1) at (6,-21) [circle, draw, inner sep=0pt, minimum width=4pt]{};
  \node (bbb2) at (8,-21) [circle, draw, inner sep=0pt, minimum width=4pt]{};
  \node (bbb3) at (10,-21) [circle, draw, inner sep=0pt, minimum width=4pt]{};
  \node (bbb4) at (6,-23) [circle, draw, inner sep=0pt, minimum width=4pt]{};

  \draw[triple] (bbb1) to (bbb2);
  \draw[triple]      (bbb2) to (bbb3);
  \draw[triple]      (bbb1) to (bbb4);
  \draw[triple] (bbb2) to (bbb4);

  \node (ccc1) at (12,-21) [circle, draw, inner sep=0pt, minimum width=4pt]{};
  \node (ccc2) at (14,-21) [circle, draw, inner sep=0pt, minimum width=4pt]{};
  \node (ccc3) at (14,-23) [circle, draw, inner sep=0pt, minimum width=4pt]{};
  \node (ccc4) at (12,-23) [circle, draw, inner sep=0pt, minimum width=4pt]{};
  
  \draw[triple]      (ccc1) to (ccc2);
  \draw[-]      (ccc2) to (ccc3);
  \draw[triple]      (ccc1) to (ccc4);
  \draw[-] (ccc1) to (ccc3);
  \draw[-]      (ccc3) to (ccc4);

  \node (ddd1) at (18,-22) [circle, draw, inner sep=0pt, minimum width=4pt]{};
  \node (ddd2) at (20,-22) [circle, draw, inner sep=0pt, minimum width=4pt]{};
  \node (ddd3) at (22,-22) [circle, draw, inner sep=0pt, minimum width=4pt]{};
  \node (ddd4) at (24,-22) [circle, draw, inner sep=0pt, minimum width=4pt]{};

  \draw[triple] (ddd1) to (ddd2);
  \draw[-] (ddd2) to (ddd3);
  \draw[triple]      (ddd3) to (ddd4);

  \node (eeee1) at (0,-24) [circle, draw, inner sep=0pt, minimum width=4pt]{};
  \node (eeee2) at (2,-24) [circle, draw, inner sep=0pt, minimum width=4pt]{};
  \node (eeee3) at (0,-26) [circle, draw, inner sep=0pt, minimum width=4pt]{};
  \node (eeee4) at (2,-26) [circle, draw, inner sep=0pt, minimum width=4pt]{};

  \draw[-] (eeee1) to (eeee2);
  \draw[triple] (eeee2) to (eeee3);
  \draw[-] (eeee1) to (eeee3);
  \draw[triple] (eeee3) to (eeee4);
  \draw[-] (eeee2) to (eeee4);
  \draw[triple] (eeee1) to (eeee4);

  \node (ffff1) at (6,-24) [circle, draw, inner sep=0pt, minimum width=4pt]{};
  \node (ffff2) at (8,-24) [circle, draw, inner sep=0pt, minimum width=4pt]{};
  \node (ffff3) at (6,-26) [circle, draw, inner sep=0pt, minimum width=4pt]{};
  \node (ffff4) at (8,-26) [circle, draw, inner sep=0pt, minimum width=4pt]{};

  \draw[-]      (ffff1) to (ffff2);
  \draw[triple] (ffff1) to (ffff3);
  \draw[triple]      (ffff2) to (ffff4);
  \draw[-] (ffff3) to (ffff4);
  \draw[-]      (ffff1) to (ffff4);
  \draw[-]      (ffff3) to (ffff2);

  \node (gggg1) at (12,-24) [circle, draw, inner sep=0pt, minimum width=4pt]{};
  \node (gggg2) at (14,-24) [circle, draw, inner sep=0pt, minimum width=4pt]{};
  \node (gggg3) at (12,-26) [circle, draw, inner sep=0pt, minimum width=4pt]{};
  \node (gggg4) at (14,-26) [circle, draw, inner sep=0pt, minimum width=4pt]{};

  \draw[triple] (gggg1) to (gggg2);
  \draw[triple] (gggg1) to (gggg3);
  \draw[-]      (gggg1) to (gggg4);
  \draw[triple] (gggg2) to (gggg4);
  \draw[triple] (gggg3) to (gggg4);

  \node (hhhh1) at (18,-24) [circle, draw, inner sep=0pt, minimum width=4pt]{};
  \node (hhhh2) at (20,-24) [circle, draw, inner sep=0pt, minimum width=4pt]{};
  \node (hhhh3) at (18,-26) [circle, draw, inner sep=0pt, minimum width=4pt]{};
  \node (hhhh4) at (20,-26) [circle, draw, inner sep=0pt, minimum width=4pt]{};

  \draw[triple]      (hhhh1) to (hhhh2);
  \draw[triple] (hhhh2) to (hhhh4);
  \draw[triple] (hhhh4) to (hhhh3);
  \draw[triple]      (hhhh3) to (hhhh1);

%
\end{tikzpicture}
\caption{Carter diagrams of type $H_4$} \label{fig:CarterH4}
\end{figure}
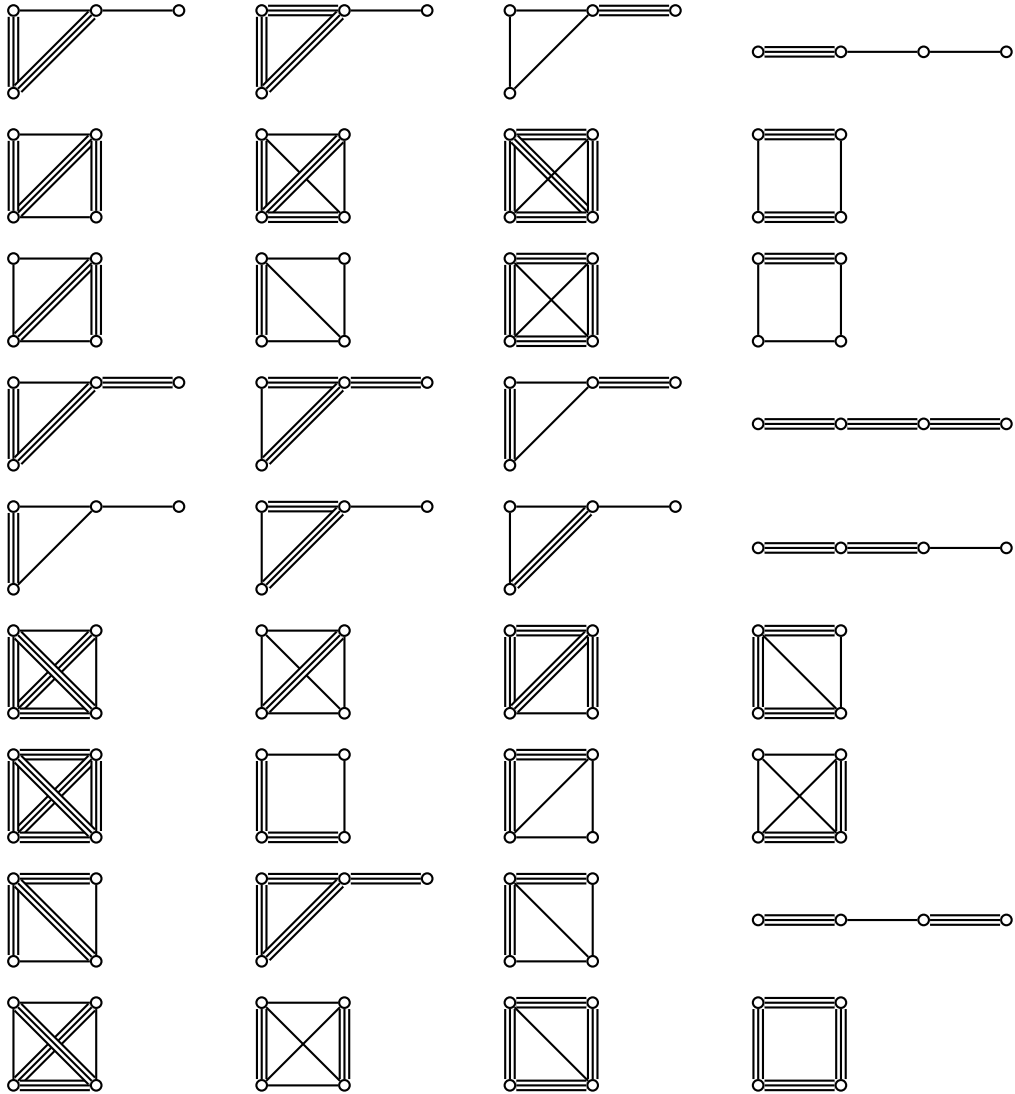

For these class of Carter diagrams, in general we do not obtain a presentation as for the crystallographic types. First we have to restrict ourselves to those Carter diagrams which are induced by reduced reflection factorizations of Coxeter elements. Those diagrams are given by the three leftmost diagrams in Figure \ref{fig:CarterH3} and by the diagrams in the first two lines in Figure \ref{fig:CarterH4}.
Then we have to replace the relation (R3) of Section \ref{sec:intro} by the following two relations
\begin{enumerate}
\item[(R3')] for any chordless cycle
$$
i_0 ~\stackrel{w_{i_0 i_1}}{\edgel} ~ i_1 ~ \stackrel{w_{i_1 i_2}}{\edgel}~ \ldots ~ \stackrel{w_{i_{d-2} i_{d-1}}}{\edgel} ~i_{d-1} ~ \stackrel{w_{i_{d-1} i_0}}{\edgel} ~ i_0,
$$
where either all weights are $1$ or $w_{i_{d-1} i_0} =3$, but not all of the other weights are $3$, we have 
$$
(t_{i_0} t_{i_1} \cdots t_{i_{d-2}} t_{i_{d-1}} t_{i_{d-2}} \cdots t_{i_1})^2 =1.
$$
\item[(R3'')] for any chordless $3$-cycle

\begin{figure}[H]
\centering
\begin{tikzpicture}[scale=0.8, thick,>=latex, double distance = 2.8pt]
  \tikzset{triple/.style={double,postaction={draw,-}}}

  \node (a1) at (0,0) []{$i_0$};
  \node (a2) at (2,0) []{$i_1$};
  \node (a3) at (4,0) []{$i_2$};
  \node (a4) at (6,0) []{$i_0$};

  \draw[triple] (a1) to (a2);
  \draw[triple] (a2) to (a3);
  \draw[triple] (a3) to (a4);
\end{tikzpicture}
\end{figure}
\noindent we have the relations 
\begin{align*}
(t_{i_0} t_{i_1} t_{i_2} t_{i_1})^3 & =1\\
(t_{i_1} t_{i_0} t_{i_1} t_{i_2} t_{i_1} t_{i_2})^2 & =1.
\end{align*}
\end{enumerate}

\medskip
Using GAP \cite{GAP2017} we arrive at the following statement. 

\begin{Proposition}
Let $(W,S)$ be a Coxeter system of type $H_3, H_4$ or $I_2(m)$, $\Gamma$ a Carter diagram of the same type with vertex set given by the reduced reflection factorization of a Coxeter element in $W$. Then the group $W(\Gamma)$ with generators $t_i$, $i$ a vertex of $\Gamma$, subject to the relations (R1), (R2), (R3') and (R3'') is isomorphic to $W$.
\end{Proposition}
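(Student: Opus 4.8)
The plan is to reduce the statement to a finite verification, handling the rank-$2$ family $I_2(m)$ separately from the two fixed groups $H_3$ and $H_4$. For type $I_2(m)$ the rank is $2$, so a Carter diagram has only two vertices and therefore contains no chordless cycle at all; consequently the relations (R3') and (R3'') are vacuous, and the presentation of $W(\Gamma)$ collapses to the generators $t_1,t_2$ subject only to $t_1^2=t_2^2=1$ and $(t_1t_2)^m=1$. This is exactly the Coxeter presentation \eqref{equ:CoxeterRelations} of $W$, so $W(\Gamma)\cong W$ with nothing further to check.

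For $H_3$ and $H_4$ the set of Carter diagrams attached to reduced reflection factorizations of a Coxeter element is finite --- explicitly the three leftmost diagrams of Figure \ref{fig:CarterH3} and the diagrams in the first two rows of Figure \ref{fig:CarterH4} --- so it suffices to establish the isomorphism for each of these finitely many $\Gamma$ individually. First I would produce a surjection $\pi\colon W(\Gamma)\twoheadrightarrow W$ by sending each generator $t_i$ to the reflection it labels in the underlying factorization $(t_1,\dots,t_n)\in\Red_T(c)$. That $\pi$ is well defined amounts to checking that the defining relations hold among the genuine reflections: (R1) and (R2) hold by definition of $m_{ij}$ as the order of $t_it_j$, while the validity of (R3') and (R3'') in $W$ is a finite check on the relevant chordless cycles, carried out directly in the reflection representation. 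Since $\langle t_1,\dots,t_n\rangle=W$, the map $\pi$ is onto, giving $|W(\Gamma)|\ge|W|$.

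It then remains to prove the reverse inequality $|W(\Gamma)|\le|W|$, that is, that the listed relations already force the group down to the correct order ($120$ for $H_3$ and $14400$ for $H_4$). This is the genuinely hard, non-routine step. Unlike the crystallographic setting, where Proposition \ref{prop:HurwitzPresInv} propagates the isomorphism along the Hurwitz orbit from an admissible diagram, here the modified relations (R3') and (R3'') would require re-running the entire elementary-move case analysis of Lemma \ref{lem:SurjHomB1} with the new cycle relations, and the two-relation form of (R3'') for the triple-edge triangle makes this bookkeeping delicate. The efficient substitute is to feed each finite presentation of $W(\Gamma)$ to GAP \cite{GAP2017} and compute its order by coset enumeration over the trivial subgroup; termination is guaranteed in practice by the finiteness and small order of $W$. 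Confirming $|W(\Gamma)|=|W|$ for every diagram in the two lists upgrades the surjection $\pi$ to an isomorphism and completes the proof. The main obstacle, then, is precisely this order computation: the conceptual Hurwitz-action argument driving Theorem \ref{thm:Main2} does not transfer verbatim because the relation set has changed, so one either redoes the move-by-move verification by hand or --- as here --- delegates the sufficiency of the relations to a machine check.
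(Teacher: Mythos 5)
Your proposal is correct and matches the paper's approach: the paper's entire proof is the one-line statement that the verification was carried out ``using GAP,'' i.e.\ exactly the machine check of the finitely many relevant presentations that you describe. Your write-up is in fact more explicit than the paper's, spelling out the vacuousness of (R3') and (R3'') in type $I_2(m)$ and the standard surjection-plus-order-count scaffolding around the coset enumeration, but the substance --- delegating the sufficiency of the relations for $H_3$ and $H_4$ to GAP --- is the same.
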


\begin{remark}
The statement of the preceding proposition is wrong if we replace the Coxeter element by a quasi-Coxeter element. That is, the diagrams in Figures \ref{fig:CarterH3} and \ref{fig:CarterH4} which are not induced by reduced reflection factorizations of a Coxeter element, do not yield a presentation of $W$ given by the relations (R1), (R2), (R3') and (R3'').
\end{remark}

\nocite{*}
\bibliographystyle{amsplain}
\bibliography{mybibDiagrammatics}

\end{document}